\theoremstyle{plain} 
\newtheorem{theorem}{\indent\sc Theorem}[section]
\newtheorem{lemma}[theorem]{\indent\sc Lemma}
\theoremstyle{definition} 
\newtheorem{remark}[theorem]{\indent\sc Remark}
\begin{document}

\font\eightrm=cmr8
\font\eightit=cmti8
\font\eighttt=cmtt8
\def\nft
{\hbox{$n$\hskip3pt$\equiv$\hskip4pt$5$\hskip4.4pt$($mod\hskip2pt$3)$}}
\def\bbP{\text{\bf P}}
\def\bbR{\text{\bf R}}
\def\rto{\mathbf{R}\hskip-.5pt^2}
\def\rtr{\text{\bf R}\hskip-.7pt^3}
\def\rn{{\mathbf{R}}^{\hskip-.6ptn}}
\def\rk{{\mathbf{R}}^{\hskip-.6ptk}}
\def\bbZ{\text{\bf Z}}
\def\hyp{\hskip.5pt\vbox
{\hbox{\vrule width3ptheight0.5ptdepth0pt}\vskip2.2pt}\hskip.5pt}
\def\er{r\hskip.3pt}
\def\df{d\hskip-.8ptf}
\def\dfh{d\hskip-.8pt\fh}
\def\txm{{T\hskip-2pt_x\hskip-.9ptM}}
\def\txhm{{T\hskip-2pt_x\hskip-.9pt\hm}}
\def\txn{{T\hskip-2pt_x\hskip-.9ptN}}
\def\txthm{{T\hskip-2pt_{x(t)}\hskip-.9pt\hm}}
\def\txohm{{T\hskip-2pt_{x(0)}\hskip-.9pt\hm}}
\def\txtsm{{T\hskip-2pt_{x(t,s)}\hskip-.9ptM}}
\def\tm{{T\hskip-.3ptM}}
\def\tn{{T\hskip-.3ptN}}
\def\tam{{T^*\!M}}
\def\so{\mathfrak{so}\hh}
\def\gi{\mathfrak{g}} 
\def\hi{\mathfrak{h}}
\def\fh{f}
\def\hf{\varLambda}
\def\rc{c}
\def\rd{d}
\def\kri{\text{\rm Ker}\hskip1pt\ri}
\def\kr{\text{\rm Ker}\hskip1ptR}
\def\kw{\text{\rm Ker}\hskip2ptW}
\def\kb{\text{\rm Ker}\hskip1.5ptB}
\def\xc{\mathcal{X}_c}
\def\dz{\mathcal{D}}
\def\dzp{\dz^\perp}
\def\dzx{\dz\hskip-.3pt_x}
\def\dzxp{\dz\hskip-.3pt_x{}\hskip-4.2pt^\perp}
\def\dzxa{\dz\hskip-.3pt_x{}\hskip-4.2pt^*}
\def\lz{\mathcal{L}}
\def\tlz{\tilde{\lz}}
\def\xe{\mathcal{E}}
\def\xz{\mathcal{X}}
\def\yz{\mathcal{Y}}
\def\zz{\mathcal{Z}}
\def\tim{\hskip1.5pt\widetilde{\hskip-1.5ptM\hskip-.5pt}\hskip.5pt}
\def\tig{\hskip.7pt\widetilde{\hskip-.7ptg\hskip-.4pt}\hskip.4pt}
\def\hm{\hskip1.9pt\widehat{\hskip-1.9ptM\hskip-.2pt}\hskip.2pt}
\def\hmt{\hskip1.9pt\widehat{\hskip-1.9ptM\hskip-.5pt}_t}
\def\hmz{\hskip1.9pt\widehat{\hskip-1.9ptM\hskip-.5pt}_0}
\def\hmp{\hskip1.9pt\widehat{\hskip-1.9ptM\hskip-.5pt}_p}
\def\hg{\hskip1.2pt\widehat{\hskip-1.2ptg\hskip-.4pt}\hskip.4pt}
\def\hri{\hskip.7pt\widehat{\hskip-.7pt\ri\hskip-.4pt}\hskip.4pt}
\def\hn{\hskip.7pt\widehat{\hskip-.7pt\nabla\hskip-.4pt}\hskip.4pt}
\def\nao{\hbox{$\nabla\!\!^{^{^{_{\!\!\circ}}}}$}}
\def\ro{\hbox{$R\hskip-4.5pt^{^{^{_{\circ}}}}$}{}}
\def\mppp{\hbox{$-$\hskip1pt$+$\hskip1pt$+$\hskip1pt$+$}}
\def\mpdp{\hbox{$-$\hskip1pt$+$\hskip1pt$\dots$\hskip1pt$+$}}
\def\mmpp{\hbox{$-$\hskip1pt$-$\hskip1pt$+$\hskip1pt$+$}}
\def\mmmp{\hbox{$-$\hskip1pt$-$\hskip1pt$-$\hskip1pt$+$}}
\def\pppp{\hbox{$+$\hskip1pt$+$\hskip1pt$+$\hskip1pt$+$}}
\def\mpmp{\hbox{$-$\hskip1pt$\pm$\hskip1pt$+$}}
\def\mpmpp{\hbox{$-$\hskip1pt$\pm$\hskip1pt$+$\hskip1pt$+$}}
\def\mmpmp{\hbox{$-$\hskip1pt$-$\hskip1pt$\pm$\hskip1pt$+$}}
\def\q{q}
\def\bq{\hat q}
\def\p{p}
\def\w{\vt^\perp}
\def\x{v}
\def\y{y}
\def\vp{\vt^\perp}
\def\vd{\vt\hh'}
\def\vdx{\vd{}\hskip-4.5pt_x}
\def\bz{b\hh}
\def\cy{{y}}
\def\rwo{\,\hs\text{\rm rank}\hskip2.7ptW\hskip-2.7pt=\hskip-1.2pt1}
\def\rwho{\,\hs\text{\rm rank}\hskip2.2ptW^h\hskip-2.2pt=\hskip-1pt1}
\def\rw{\,\hs\text{\rm rank}\hskip2.4ptW\hskip-1.5pt}
\def\im{\varPhi}
\def\js{J}
\def\ism{H}
\def\fe{F}
\def\fy{f}
\def\dfc{dF\hskip-2.3pt_\cy\hskip.4pt}
\def\dfct{dF\hskip-2.3pt_\cy(t)\hskip.4pt}
\def\dic{d\im\hskip-1.4pt_\cy\hskip.4pt}
\def\vl{\Lambda}
\def\qt{\mathcal{E}}
\def\tqt{\tilde{\qt}}
\def\vh{h}
\def\mv{V}
\def\vy{\mathcal{V}}
\def\xv{\mathcal{X}}
\def\yv{\mathcal{Y}}
\def\iv{\mathcal{I}}
\def\gkp{\Sigma}
\def\bs{\varSigma}
\def\hs{\hskip.7pt}
\def\hh{\hskip.4pt}
\def\nh{\hskip-.7pt}
\def\nnh{\hskip-1pt}
\def\hrz{^{\hskip.5pt\text{\rm hrz}}}
\def\vrt{^{\hskip.2pt\text{\rm vrt}}}
\def\th{\varTheta}
\def\zh{\zeta}
\def\vg{\varGamma}
\def\my{\mu}
\def\ny{\nu}
\def\gy{\lambda}
\def\gm{\gamma}
\def\gp{\mathrm{G}}
\def\hp{\mathrm{H}}
\def\kp{\mathrm{K}}
\def\Gm{\Gamma}
\def\Lm{\Lambda}
\def\Dt{\Delta}
\def\sj{\sigma}
\def\lg{\langle}
\def\rg{\rangle}
\def\lr{\lg\,,\rg}
\def\uv{\underline{v\hskip-.8pt}\hskip.8pt}
\def\uvp{\underline{v\hh'\hskip-.8pt}\hskip.8pt}
\def\uw{\underline{w\hskip-.8pt}\hskip.8pt}
\def\uxs{\underline{x_s\hskip-.8pt}\hskip.8pt}
\def\vs{vector space}
\def\rvs{real vector space}
\def\vf{vector field}
\def\tf{tensor field}
\def\tvn{the vertical distribution}
\def\dn{distribution}
\def\od{Ol\-szak distribution}
\def\pt{point}
\def\tc{tor\-sion\-free connection}
\def\ea{equi\-af\-fine}
\def\rt{Ricci tensor}
\def\pde{partial differential equation}
\def\pf{projectively flat}
\def\pfs{projectively flat surface}
\def\pfc{projectively flat connection}
\def\pftc{projectively flat tor\-sion\-free connection}
\def\su{surface}
\def\sco{simply connected}
\def\psr{pseu\-\hbox{do\hs-}Riem\-ann\-i\-an}
\def\inv{-in\-var\-i\-ant}
\def\trinv{trans\-la\-tion\inv}
\def\feo{dif\-feo\-mor\-phism}
\def\feic{dif\-feo\-mor\-phic}
\def\feicly{dif\-feo\-mor\-phi\-cal\-ly}
\def\Feicly{Dif\-feo\-mor\-phi\-cal\-ly}
\def\diml{-di\-men\-sion\-al}
\def\prl{-par\-al\-lel}
\def\skc{skew-sym\-met\-ric}
\def\sky{skew-sym\-me\-try}
\def\Sky{Skew-sym\-me\-try}
\def\dbly{-dif\-fer\-en\-ti\-a\-bly}
\def\cs{con\-for\-mal\-ly symmetric}
\def\cf{con\-for\-mal\-ly flat}
\def\ls{locally symmetric}
\def\ecs{essentially con\-for\-mal\-ly symmetric}
\def\rr{Ric\-ci-re\-cur\-rent}
\def\kf{Killing field}
\def\om{\omega}
\def\vol{\varOmega}
\def\dv{\delta}
\def\ve{\varepsilon}
\def\zt{\zeta}
\def\kx{\kappa}
\def\mf{manifold}
\def\mfd{-man\-i\-fold}
\def\bmf{base manifold}
\def\bd{bundle}
\def\tbd{tangent bundle}
\def\ctb{cotangent bundle}
\def\bp{bundle projection}
\def\prc{pseu\-\hbox{do\hs-}Riem\-ann\-i\-an metric}
\def\prd{pseu\-\hbox{do\hs-}Riem\-ann\-i\-an manifold}
\def\Prd{pseu\-\hbox{do\hs-}Riem\-ann\-i\-an manifold}
\def\npd{null parallel distribution}
\def\pj{-pro\-ject\-a\-ble}
\def\pd{-pro\-ject\-ed}
\def\lcc{Le\-vi-Ci\-vi\-ta connection}
\def\vb{vector bundle}
\def\vbm{vec\-tor-bun\-dle morphism}
\def\kerd{\text{\rm Ker}\hskip2.7ptd}
\def\sy{\sigma}
\def\ts{total space}
\def\pmb{\pi}
\def\ri{{\rho}}

\renewcommand{\thepart}{\Roman{part}}

\title[Compact manifolds with parallel Weyl tensor]{On compact manifolds 
admitting indefinite\\
metrics with parallel Weyl tensor}
\author[A. Derdzinski]{Andrzej Derdzinski} 

\author[W. Roter]{Witold Roter} 

\subjclass[2000]{ 
53C50.
}
%
\keywords{ 
Parallel Weyl tensor, conformally symmetric manifold.
}
\address{
Department of Mathematics \endgraf
The Ohio State University \endgraf
Columbus, OH 43210 \endgraf
USA
}
\email{andrzej@math.ohio-state.edu}

\address{
Institute of Mathematics and Computer Science \endgraf
Wroc\l aw University of Technology \endgraf
Wy\-brze\-\.ze Wys\-pia\'n\-skiego 27, 50-370 Wroc\l aw \endgraf
Poland
}
\email{roter@im.pwr.wroc.pl}

\begin{abstract}Compact pseudo-Riemannian manifolds that have parallel Weyl 
tensor without being conformally flat or locally symmetric are known to exist 
in infinitely many dimensions greater than 4. We prove some general 
topological properties of such manifolds, namely, vanishing of the Euler 
characteristic and real Pontryagin classes, and infiniteness of the 
fundamental group. We also show that, in the Lorentzian case, each of them 
is at least 5-dimensional and admits a two-fold cover which is a bundle over 
the circle.
\end{abstract}

\maketitle

\voffset=-33pt\hoffset=-6pt

\setcounter{theorem}{0}
\renewcommand{\thetheorem}{\Alph{theorem}}
\section*{Introduction}
One calls a \prd\ $\,(M,g)\,$ of dimension $\,n\ge4\,$ {\it conformally 
symmetric\/} \cite{chaki-gupta} if its Weyl conformal tensor is parallel, and 
{\it essentially conformally symmetric\/} if, in addition, $\,(M,g)\,$ is 
neither conformally flat nor locally symmetric. All \ecs\ \mf s have 
indefinite metrics \cite[Theorem~2]{derdzinski-roter-77}.

The Weyl conformal tensor is one of the three irreducible components of the 
curvature tensor under the action of the pseu\-do\hs-or\-thog\-o\-nal group, 
the other two corresponding to the scalar curvature and traceless Ricci 
tensor. This puts \cs\ \mf s on par with two other classes, formed by \mf s 
with constant scalar curvature and, respectively, parallel Ric\-ci tensor, 
including Einstein spaces.

The local structure of \ecs\ \prc s is fully understood 
\cite{derdzinski-roter-07}; they realize any prescribed indefinite signature 
in every dimension $\,n\ge4$. They are also known to exist on some compact 
manifolds \feic\ to torus bundles over the circle \cite{derdzinski-roter}, 
where they represent all indefinite metric signatures in all dimensions 
$\,n\ge5\,$ such that $\,\nft$. Consequently, there arises a natural question 
of characterizing the compact manifolds that admit such metrics.

The present paper provides a step toward an answer by establishing some 
necessary conditions. Our first result, except for the claim about $\,\pi_1M$, 
is derived in Section~\ref{pfta} from the Chern\hs-Weil formulae. 
Infiniteness of $\,\pi_1M\,$ is proved in Section~\ref{psta}: we argue there 
that, if $\,M\,$ were simply connected, it would be a bundle over $\,S^2$ with 
a fibre covered by $\,\bbR^{\hskip-.6ptn-2}\nnh$, which is impossible for 
topological reasons.
\begin{theorem}\label{pontr}Let a manifold\/ $\hs M\nnh$ of 
\hskip-.3ptdimension $\hskip1ptn\ge4$ admit an \ecs\ \prc. The real Pontryagin 
classes $\,p_i(M)\in H^{4i}(M,\bbR)$ then vanish for all\/ $\,i\ge1$, If, in 
addition, $\,M\hs$ is compact, then it has zero Euler characteristic, and its 
fundamental group is infinite.
\end{theorem}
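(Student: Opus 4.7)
I would address the three assertions separately. For the Pontryagin classes and the Euler characteristic, I would invoke the local classification of \ecs\ \prc s cited in the Introduction. That classification guarantees that on every \ecs\ \mf\ $(M,g)$ the curvature tensor, regarded pointwise as an $\so(p,q)$-valued 2-form, takes values in the unipotent radical of the parabolic subalgebra of $\so(p,q)$ that stabilises the parallel null direction supplied by the \od. Every $O(p,q)$-invariant polynomial of positive degree vanishes on this nilpotent ideal: traces of powers of nilpotent elements are zero, so by Newton's identities the elementary symmetric functions of the eigenvalues vanish, and the Pfaffian is zero because the determinant is. Consequently the Chern\hs-Weil forms representing $p_i(M)$ for every $i\ge 1$, and the Euler form $e(\tm)$ when $\dim M$ is even, vanish identically on $M$. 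This yields $p_i(M)=0$ in $H^{4i}(M,\bbR)$ and $\chi(M)=0$ in the compact even-dimensional case; the odd-dimensional case of $\chi(M)=0$ is of course automatic.

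For the infiniteness of $\pi_1M$, I would argue by contradiction. Suppose $\pi_1M$ is finite; after passing to a finite cover we may assume $M$ is compact and simply connected. The \ecs\ structure furnishes a parallel integrable rank-$(n-2)$ distribution with totally geodesic leaves, obtained as the orthogonal complement of a canonical parallel rank-$2$ distribution attached to the \od. On a compact simply connected $M$ this foliation should assemble into a smooth fibre bundle $F\hookrightarrow M \to B$ whose base $B$ is a compact surface and whose fibre $F$ is a closed $(n-2)$-manifold; simple connectedness of $M$ then forces $\pi_1B=0$, so that $B=S^2$. Using the local structure of \ecs\ metrics, the universal cover of each leaf $F$ is identified with $\bbR^{n-2}$.

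The main obstacle will be promoting the parallel foliation to an honest smooth fibre bundle over a compact $2$-manifold, where both the compactness of the leaf space and its Hausdorffness have to be extracted from parallelism plus compactness of $M$. Once this has been achieved, the contradiction is short and purely topological: since $F$ is compact and aspherical, the homotopy exact sequence of $F\to M\to S^2$ combined with $\pi_1M=0$ and $\pi_2F=0$ shows that $\pi_1F$ is a cyclic quotient of $\pi_2S^2=\bbZ$. If $\pi_1F=0$, then the contractible space $\bbR^{n-2}$ would be compact, which is absurd. If $\pi_1F=\bbZ$, then $F=\bbR^{n-2}/\bbZ$ is compact only when $n-2=1$, contradicting $n\ge 4$. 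Finally, if $\pi_1F$ is finite and non-trivial, then $F$ would be a compact aspherical $K(\pi,1)$-manifold with finite non-trivial $\pi$; but the group cohomology of such a $\pi$ is non-zero in infinitely many degrees, which is incompatible with $F$ being a finite-dimensional classifying space. Every case yields a contradiction, so $\pi_1M$ is infinite.
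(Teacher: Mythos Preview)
Your proposal has genuine gaps stemming from a missing dichotomy: the Olszak distribution $\dz$ has dimension $d\in\{1,2\}$ (Lemma~\ref{csnpd}(i)), and you implicitly assume $d=1$ for the Chern--Weil part (``the parallel null direction'') but $d=2$ for the $\pi_1$ part (a codimension-two foliation). For the characteristic classes, your nilradical claim actually \emph{fails} when $d=2$: with $R=W+(n-2)^{-1}g\wedge\ri$ (Lemma~\ref{rwcod}(a)) and $w\in\dz$, one gets $R(u,v,w,\,\cdot\,)=(n-2)^{-1}[\hs g(u,w)\hs\ri(v,\,\cdot\,)-g(v,w)\hs\ri(u,\,\cdot\,)\hs]$, which at points with $\ri\ne0$ is generically nonzero; so $R(u,v)$ preserves $\dz$ without annihilating it and lies only in the parabolic, not its nilradical, where invariant polynomials need not vanish. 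The paper sidesteps this for the Pontryagin forms via Avez's theorem, replacing $R$ by $W$ in the generating forms (and $W(u,v)$ \emph{is} in the nilradical, by (\ref{krp}.b) and Lemma~\ref{csnpd}(iii)); for the Euler form, where no such replacement exists, it gives a direct case analysis on a basis adapted to $\dzp$ (proof of Lemma~\ref{eupoz}).

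For $\pi_1M$, you omit the $d=1$ case entirely; there the argument is short: a parallel nowhere-zero $1$-form $\xi=g(u,\,\cdot\,)$ on a compact simply connected $M$ would be exact, $\xi=dt$, contradicting compactness. In the $d=2$ case the step you call ``the main obstacle'' is exactly the substance of Section~\ref{vbkf}: one constructs a flat connection $\nao$ on a rank-$3$ bundle $\dz\oplus\lz$, whose $3$-dimensional space $\vy$ of parallel sections then supports a rank-$2$ map $F:M\to\vy\smallsetminus\{0\}$, and the projectivisation of $F$ is the desired fibration $M\to S^2$ with fibres the leaves of $\dzp$. Without this construction there is no reason for the leaf space even to be Hausdorff. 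The same section (item (h), combined with Lemma~\ref{flcpl} and Remark~\ref{scfco}) is also what shows the universal cover of each leaf is $\bbR^{\hskip-.6ptn-2}$; appealing to ``the local structure'' does not by itself give completeness of the flat connection on the leaves, which is what is needed here.
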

Applied to manifolds $\,M\,$ which are both compact and orientable, the 
assertion about $\,p_i(M)\,$ in Theorem~\ref{pontr} leads to a similar 
conclusion about the Pontryagin numbers, including the signature of $\,M$.

As mentioned above, compact \ecs\ Lo\-rentz\-i\-an \mf s exist in infinitely 
many dimensions $\,n$, starting from $\,n=5$, while noncompact ones exist in 
every dimension $\,n\ge4$. Our next two results deal with their topological 
structure in the compact case and with the dimension $\,4$.
\begin{theorem}\label{bdcir}Let\/ $\,(M,g)\,$ be a compact \ecs\ 
Lo\-rentz\-i\-an \mf. Then some two-fold covering manifold of\/ $\,M\,$ is 
the total space of a $\,C^\infty\nnh$ bundle over the circle, the fibre of 
which admits a flat tor\-sion\-free connection with a nonzero parallel vector 
field.
\end{theorem}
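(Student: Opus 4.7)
The plan is to extract a parallel null vector field on a double cover of $M$, use it to produce a fibration over the circle via Tischler's theorem, and then identify the geometric structure induced on a fiber.

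For any Lorentzian \ecs\ \mf\ the structure theory of the Weyl tensor provides a canonical rank-one parallel null \dn\ $\dz$, the \od. As a real line subbundle of $\tm$, it has an orientation double cover $\hm\to M$ on which $\dz$ pulls back to a trivializable bundle; its parallel sections give a globally defined parallel null \vf\ $u$ on $\hm$. The 1-form $\xi=g(u,\cdot)$ is then parallel, hence closed, and nowhere vanishing (a parallel 1-form nonzero at one point is nonzero everywhere, by parallel transport). Compactness of $\hm$ rules out $[\xi]=0$ in $H^1(\hm,\bbR)$, since an antiderivative would have a critical point.

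I would next invoke Tischler's theorem: a compact \mf\ carrying a nowhere-vanishing closed 1-form admits a smooth submersion onto $S^1$, producing a fiber bundle $\pi\colon\hm\to S^1$. To arrange that the fibers of $\pi$ coincide with the leaves of the involutive parallel distribution $\ker\xi=u^\perp$, I would show that $[\xi]\in H^1(\hm,\bbR)$ is a real multiple of an integral class, by analyzing the period homomorphism $\pi_1\hm\to\bbR$, $\gamma\mapsto\int_\gamma\xi$, and arguing, via the structure theorem \cite{derdzinski-roter-07} and rigidity of compact \ecs\ \mf s, that its image is cyclic. Scaling then yields $f\colon\hm\to S^1$ with $df=c\xi$, whose fibers are exactly the leaves of $\ker\xi$.

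On any such fiber $N$ the distribution $\ker\xi$ is parallel and integrable, so $N$ is totally geodesic in $\hm$: for $v$ tangent to $N$ and any $w$ we have $\xi(\nabla_w v)=(\nabla_w\xi)(v)+w(\xi(v))=0$. Consequently the \lcc\ of $\hm$ restricts to a \tc\ on $N$. The \vf\ $u$, being null, lies in $\ker\xi$ and is therefore tangent to $N$, and it remains parallel under the induced connection. Flatness of the induced connection on $N$ follows from the concrete form of the Riemann curvature of a Lorentzian \ecs\ \mf: both its Weyl and Ricci components are built canonically from $\xi$ and the Olszak data in a way that vanishes on vectors in $\ker\xi$, so the induced curvature on $N$ is identically zero.

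The main obstacle is the middle step. Tischler's general argument only yields a fibration whose defining 1-form lies in a nearby rational cohomology class, not a scalar multiple of $[\xi]$, so without additional rigidity the fibers would be merely $C^\infty$-close to leaves of $\ker\xi$ rather than equal to them. Establishing cyclicity of the period homomorphism is what turns the soft Tischler fibration into the geometrically meaningful one needed to transfer the parallel data from the ambient connection down to the fiber.
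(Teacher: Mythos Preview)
Your outline correctly isolates the obstacle: Tischler's theorem only produces a fibration whose fibers are \emph{close} to the leaves of $\ker\xi$, and without the fibers actually being those leaves you cannot transfer the flat connection and the parallel field $u$ to the fiber. Your proposed fix --- invoking the local structure theorem \cite{derdzinski-roter-07} and unspecified ``rigidity'' to force the period group to be cyclic --- is not an argument; you have named the difficulty but not resolved it. This is the genuine gap.

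The paper supplies a concrete mechanism coming from the Ricci tensor, not from any classification. Since $\dz$ is one-dimensional and spanned by $u$, Lemma~\ref{csnpd}(ii) forces $\ri=\psi\,\xi\otimes\xi$ for some function $\psi$. The Codazzi equation $\nabla\ri$ totally symmetric (Lemma~\ref{rwcod}(b)) then gives $d\psi\wedge\xi=0$, i.e.\ $d\psi=\phi\,\xi$, so $\phi\,\xi$ is \emph{exact}. Moreover $\psi$ is nonconstant (else $\ri$ would be parallel and $g$ locally symmetric), so $\phi$ is not identically zero. On the universal cover $\xi=dt$, and the pullback of $\psi$ is a deck-invariant function of $t$ alone; hence every period $\int_\gamma\xi$ is a period of a nonconstant real function, forcing the period group to be discrete and therefore cyclic. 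That is precisely the missing step, packaged in the paper as Lemma~\ref{clext}, and once you have it the fibers of the resulting map $M\to\bbR/c\bbZ$ are exactly the leaves of $\dzp$, with the flat connection from Lemma~\ref{csnpd}(iii) and the parallel field $u$.

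A smaller point: trivializing $\dz$ as a line bundle on the orientation double cover does not by itself yield a global \emph{parallel} section; the holonomy of the induced flat connection could a priori land in $\bbR_{>0}$ rather than $\{\pm1\}$. The paper handles this separately (Theorem~\ref{parvf}) by building a parallel fibre norm on $\dz$ out of the Weyl tensor, which pins $u$ down up to sign. You should cite that result rather than treat it as automatic.
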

\begin{theorem}\label{nolor}Every four\diml\ \ecs\ Lo\-rentz\-i\-an \mf\ is 
noncompact.
\end{theorem}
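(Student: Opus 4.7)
The plan is to assume for contradiction that $\,(M,g)\,$ is a compact four\diml\ \ecs\ Lo\-rentz\-i\-an \mf, and to derive a contradiction by combining Theorem~\ref{bdcir} with the local classification of \ecs\ \prc s in dimension four from~\cite{derdzinski-roter-07}. First, Theorem~\ref{bdcir} applied to $\,(M,g)\,$ gives a two-fold covering $\,M'\!\to\nh M\,$ in which $\,M'\,$ is the total space of a smooth fibration over $\,S^1$, whose compact three\diml\ fiber $\,F\,$ carries a flat tor\-sion\-free connection together with a nowhere-vanishing parallel \vf.

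The key geometric step is to identify $\,F\,$ intrinsically with the ambient \ecs\ structure. Every Lo\-rentz\-i\-an \ecs\ \mf\ carries the Olszak \dn\ $\,\dz$, a parallel null line field; in dimension four its $\,g$-or\-thog\-o\-nal complement $\,\dz^\perp\,$ is a parallel three\diml\ \dn\ containing $\,\dz$. One shows that the fibers of the bundle $\,M'\!\to\nh S^1\,$ provided by Theorem~\ref{bdcir} coincide with the leaves of $\,\dz^\perp\nh$, that the parallel \vf\ on $\,F\,$ spans $\,\dz$, and that the flat tor\-sion\-free connection on $\,F\,$ is the one induced by the ambient \lcc. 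The local normal form of~\cite{derdzinski-roter-07} then provides explicit coordinates around each fiber in which both the metric on $\,M'\,$ and the flat affine structure on $\,F\,$ are given by formulas involving a single function whose non-degenerate Hes\-sian along $\,\dz^\perp\!/\dz\,$ encodes the parallel, nonzero Weyl tensor.

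The main obstacle, and the decisive final step, is to convert this local rigidity into an incompatibility with compactness of $\,F$. In the four\diml\ case the two\diml\ quotient $\,\dz^\perp\!/\dz\,$ leaves very little room for the Hes\-sian-type function appearing in the local model, so on the universal cover of $\,F\,$ this function is forced to be a non-constant polynomial of controlled degree that must be simultaneously preserved by the cocompact deck-transformation action (which is affine and fixes the parallel \vf) and by the mo\-no\-dro\-my of $\,M'\!\to\nh S^1\nh$. A direct analysis, using the explicit \ecs\ normal form together with the classification of compact three\diml\ flat affine \mf s admitting a parallel \vf, shows that no such compact $\,F\,$ exists; the three\diml\ fiber is simply too small to support both the parallel-Weyl equations and a cocompact affine holonomy at once. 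This contradiction proves the theorem. The part I expect to be genuinely delicate is precisely this last step, since it requires matching the local \ecs\ formulas with the global topology of the fiber; the rest follows directly from Theorem~\ref{bdcir} and from purely algebraic properties of the Olszak \dn\ in dimension four.
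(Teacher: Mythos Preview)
Your proposal is a plan, not a proof: you explicitly flag the ``decisive final step'' as the delicate part and then do not carry it out. Everything before that step (the two-fold cover, the fibration over $S^1$, the identification of the fibres with leaves of $\dzp$, the flat connection and parallel vector field on the fibre) is already the content of Theorem~\ref{bdcir} and the discussion around it, so the whole weight of Theorem~\ref{nolor} rests on the part you leave open.

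The paper's route to the contradiction is quite different from what you sketch. It does not work with a single fibre $F$; instead it passes to the universal cover $(\hm,\hg)$ of $M$. The key ingredients are: (i) completeness of the leaves of $\dzp$ (Lemma~\ref{eglfl}, which in dimension four applies automatically by Remark~\ref{eigvl}, and which itself relies on Lemma~\ref{flcpl}); (ii) completeness of $(\hm,\hg)$ (Lemma~\ref{compl}, a nontrivial computation with variations of geodesics and (\ref{ode})); (iii) the classification Theorem~\ref{class}, which identifies $(\hm,\hg)$ with one of the explicit models of Section~\ref{xpls}; and (iv) Lemma~\ref{ecsrr}(iv-b), which rules out a compact quotient of such a model. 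Step (iv) in turn rests on the finite-index computation (iv-a) of the full isometry group and on Theorem~7.3 of~\cite{derdzinski-roter}. None of this machinery appears in your outline.

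Your proposed shortcut through the fibre $F$ is not just incomplete but unlikely to work as stated. Compact three\diml\ manifolds with a flat torsionfree connection and a nonzero parallel vector field certainly exist (the $3$-torus, for instance), so the obstruction cannot come from the intrinsic affine geometry of $F$ alone. The ``Hes\-sian-type function'' you allude to, restricted to a single fibre $t=t_0$, is just the fixed quadratic form $f(t_0)\lg v,v\rg+\lg Av,v\rg$ on the $2$\diml\ space $\mv$, constant in the $s$-direction; its invariance under affine deck transformations of $\tilde F$ preserving $u$ and $A$ is automatic and yields no contradiction. The real obstruction lives in the action of $\pi_1M$ by isometries on the full universal cover $\hm$, not on $\tilde F$, and controlling that action is precisely what the paper's machinery (Theorem~\ref{class} together with Lemma~\ref{ecsrr}(iv)) accomplishes.
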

The fibration $\,M\to S^1$ in Theorem~\ref{bdcir} has an explicit geometric 
description. Namely, its fibres are the leaves of a parallel distribution 
$\,\dzp$ on $\,(M,g)$, which is the orthogonal complement of the {\it Ol\-szak 
distribution} $\,\dz$, defined, in any \cs\ \mf, by declaring the sections of 
$\,\dz\hs$ to be the vector fields $\,u\,$ such that, for all vector fields 
$\,v,v\hh'\nnh$, one has $\,\xi\wedge\hs\varOmega\hs=0$, where 
$\,\xi=g(u,\,\cdot\,)\,$ and 
$\,\hs\varOmega\hs=W(v,v\hh'\nnh,\,\cdot\,,\,\cdot\,)$. (Here $\,W\hs$ denotes 
the Weyl tensor; thus, $\,\hs\varOmega\,$ is a differential $\,2$-form.) 
Ol\-szak introduced the distribution $\,\dz\,$ in a more general situation 
\cite{olszak}, and showed that, on an \ecs\ \mf, $\,\dz\,$ is a \npd\ of 
dimension $\,1\,$ or $\,2$. See Lemma~\ref{csnpd}(i).

We derive Theorems~\ref{bdcir} and~\ref{nolor} in Sections~\ref{potb} 
and~\ref{potc} from the following result, proved in Section~\ref{pote}:
\begin{theorem}\label{parvf}For every \ecs\ Lo\-rentz\-i\-an \mf, the \od\/ 
$\,\dz$ is one\diml. Passing to a two-fold covering manifold, if 
necessary, we may assume that\/ $\,\dz\hs$ is trivial as a real line bundle, 
and then\/ $\,\dz\hs$ is spanned by a global parallel vector field.
\end{theorem}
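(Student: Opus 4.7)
The plan is to handle the dimensional statement by linear algebra, then use a two-fold cover to trivialize $\dz$ as a line bundle, and finally to exhibit a global parallel section using the parallel Weyl tensor.

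By Lemma~\ref{csnpd}(i), the Olszak distribution $\dz$ is a \npd\ of dimension $1$ or $2$. In Lorentzian signature, any totally null subspace of a tangent space has dimension at most $1$: by Cauchy--Schwarz applied to the spatial components of two putative linearly independent null vectors that are mutually orthogonal, such a pair must be linearly dependent. Hence $\dim\dz = 1$ pointwise, which gives the first assertion.

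Next, $\dz$ is a real line subbundle of $TM$, whose orientability is measured by the first Stiefel--Whitney class $w_1(\dz)\in H^1(M,\mathbb{Z}_2)$. If $w_1(\dz)\neq 0$, pass to the corresponding connected two-fold cover; otherwise work on $M$ itself. Call the resulting manifold $\hm$. On $\hm$, the bundle $\dz$ is trivial, and I fix a nowhere-zero smooth section $u$. Parallelism of $\dz$ yields a smooth 1-form $\theta$ on $\hm$ with $\nabla\!_X u=\theta(X)\hs u$. Since replacing $u$ by $fu$ changes $\theta$ by $d\log|f|$, a nowhere-zero parallel section of $\dz$ on $\hm$ exists if and only if $\theta$ is exact.

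The main step is this exactness. First I would prove closedness. Since $u$ is recurrent with form $\theta$, a direct computation gives $R(X,Y)\hs u=d\theta(X,Y)\hs u$, so closedness is equivalent to $R(X,Y)\hs u=0$. Using the Kulkarni--Nomizu form
\[
W\;=\;\psi\odot(u^\flat\otimes u^\flat)
\]
for the Weyl tensor---where $u^\flat=g(u,\cdot)$ and $\psi$ is a symmetric $(0,2)$-tensor, the form being forced by the defining property of $\dz$ and the curvature symmetries of $W$---together with the Ricci-free condition on $W$, which yields $\psi(u,\cdot)\in\mathbb{R}\hs u^\flat$, and the identity $\rho(u,\cdot)=0$ expected for the Ricci tensor in this setting, one computes that $R(X,Y)\hs u=0$, and so $d\theta=0$. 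To promote closedness to global exactness on $\hm$, I would exhibit a canonical nowhere-zero scalar on $\hm$ whose logarithmic differential is a nonzero constant multiple of $\theta$. The candidate arises from $\nabla W=0$ combined with $\nabla u^\flat=\theta\otimes u^\flat$, which produces a recurrence of the form $\nabla\psi=-2\theta\otimes\psi$; a suitable scalar invariant $\sigma$ of $\psi$ then satisfies a linear ODE $d\sigma=-2k\sigma\theta$ for a nonzero integer $k$, and $\sigma$ must be nowhere zero since any zero would propagate via the ODE and force $W\equiv 0$, contradicting the \ecs\ hypothesis. Hence $\theta=-(2k)^{-1}d\log|\sigma|$ is globally exact, and $fu$ with $f=|\sigma|^{1/(2k)}$ is the required global parallel section spanning $\dz$.

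The principal obstacle is this last point: producing a \emph{global} nowhere-zero scalar $\sigma$ on $\hm$ from the parallel Weyl tensor, without any compactness hypothesis. Whereas the local structure theory of \cite{derdzinski-roter-07} immediately furnishes a local parallel null vector on any \ecs\ Lorentzian metric, globalization depends on the full algebraic rigidity imposed by $\nabla W=0$ and by $\dim\dz=1$, which is where the bulk of the technical work lies.
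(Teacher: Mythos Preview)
Your outline is close to the paper's argument in spirit, and the first two steps ($\dim\dz=1$ in Lorentzian signature; passing to a two-fold cover to trivialize $\dz$) are fine. The gap you flag at the end is real, but the missing ingredient is more concrete than ``full algebraic rigidity'': it is a \emph{second} use of the Lorentzian hypothesis, which you never invoke after establishing $\dim\dz=1$.

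Here is what the paper does. The quotient bundle $\qt=\dzp/\dz$ carries a parallel fibre metric $\gm$ induced by $g$ (Remark~\ref{fibme}), and since $g$ is Lorentzian and $\dim\dz=1$, this $\gm$ is \emph{positive definite} (its sign pattern is obtained from that of $g$ by deleting one plus and one minus). The Weyl tensor defines a parallel, fibrewise-injective morphism $\varPhi:(\dz^*)^{\otimes2}\to(\qt^*)^{\otimes2}$ (Remark~\ref{paral}); for $\lambda\in\dz_x^*$ with $\lambda(u)=1$, the element $\varPhi_x(\lambda\otimes\lambda)$ is essentially your $\psi$, descended to $\qt$. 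One then sets $|u|=|\varPhi_x(\lambda\otimes\lambda)|_\gm^{-1/2}$. Because $\gm$ is definite and $\varPhi$ is nowhere zero, this is a well-defined parallel fibre norm on the line bundle $\dz$, and any unit section is the required global parallel vector field.

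In your language, the ``suitable scalar invariant'' is $\sigma=|\psi|_\gm^2$. Your ODE-propagation argument for $\sigma\ne0$ is circular as written: from $d\sigma=-2k\hh\sigma\hh\theta$ and $\sigma(x_0)=0$ you get $\sigma\equiv0$, but this forces $W\equiv0$ only if the vanishing of $\sigma$ already implies the vanishing of $\psi$---which is precisely what requires $\gm$ to be definite. In an indefinite quotient signature, a nonzero $\gm$-self-adjoint operator on $\qt_x$ can be nilpotent, so that \emph{every} polynomial $O(\gm)$-invariant vanishes on it; no choice of $\sigma$ would then succeed. Thus the Lorentzian hypothesis enters twice: once to force $\dim\dz=1$, and once more to make $\gm$ definite and thereby supply the nowhere-zero scalar you were looking for.
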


\renewcommand{\thetheorem}{\thesection.\arabic{theorem}}
\section{Preliminaries}\label{prel}
Throughout this paper, all manifolds and bundles, along with sections and 
connections, are assumed to be of class $\,C^\infty\nnh$. Manifolds 
(including fibres of bundles) are, by definition, connected. A mapping is 
always a $\,C^\infty$ mapping betweeen manifolds.
\begin{remark}\label{fibra}A surjective submersion $\,\pi:M\to P\,$ such 
that the sets $\,\pi^{-1}(y)$, $\,y\in P$, are all compact can always be 
factored as $\,M\to Q\to P$, with a locally trivial fibration $\,M\to Q\,$ 
having compact (connected) fibres, and a finite covering projection 
$\,Q\to P$.

Namely, $\,\pi\,$ itself is a locally trivial fibration, except that each 
fibre $\,\pi^{-1}(y)$, rather than being connected (and hence a manifold in 
our sense), may in general have some finite set $\,Q_y$ of connected 
components. This well-known fact becomes clear if one uses the holonomy of 
any ``nonlinear connection'' (a distribution in $\,M\,$ complementary to the 
fibres). We now define $\,Q\,$ and $\,Q\to P\,$ by 
$\,Q=\bigcup_{\hh y\in P}(\{y\}\times Q_y)\,$ and $\,(y,N)\mapsto y$.
\end{remark}
\begin{lemma}\label{clext}Suppose that a closed\/ $\,1$-form\/ $\,\xi$ on 
a compact manifold\/ $\,M\hs$ is nonzero everywhere and, for some function 
$\,\phi:M\to\bbR\,$ which is nonzero somewhere, the form\/ $\,\phi\hs\xi\,$ 
is exact. Then\/ $\,M\hs$ is the total space of a bundle over the circle 
$\,S^1\nnh$. In addition, for any functions\/ $\,t:\hm\to\bbR\,$ on the 
universal covering \mf\ of\/ $\,M\hs$ and\/ $\,\theta:M\to\bbR\hs$, such 
that\/ $\,dt\,$ is the pull\-back of\/ $\,\xi\,$ to $\,\hm\,$ and\/ 
$\,d\theta=\phi\hs\xi$, and for some $\,c\in(0,\infty)$,
\begin{enumerate}
  \def\theenumi{{\rm\alph{enumi}}}
\item $t:\hm\to\bbR\,$ descends to a bundle projection 
$\,M\to\bbR/c\hh\bbZ\hh=S^1\nnh$, 
\item the pull\-back of\/ $\,\theta\,$ to\/ $\,\hm\hs$ equals the composite 
$\,\hf(t)\,$ for some nonconstant function $\,\hf:\bbR\to\bbR\,$ which is 
periodic, and has\/ $\,c\,$ as a period,
\item $\hm\,$ can be \feicly\ identified with $\,\bbR\times N\hs$ for some 
manifold\/ $\,N\hs$ so as to make\/ $\,\hs t\,$ coincide with the 
projection\/ $\,\bbR\times N\to\bbR\hs$.
\end{enumerate}
\end{lemma}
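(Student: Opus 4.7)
The plan is to lift everything to the universal covering $\,\hm\,$ of $\,M$, trivialise $\,\hm\,$ as $\,\bbR\times N\,$ by flowing along a complete vector field transverse to $\,\ker\xi$, and then extract from the $\,\pi_1M$-equivariance of this picture the fact that the period group of $\,\xi\,$ must be a nonzero cyclic subgroup of $\,\bbR$.

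Let $\,\pi:\hm\to M\,$ be the universal covering, and set $\,\tilde\theta=\theta\circ\pi\,$ and $\,\tilde\phi=\phi\circ\pi$. The pullback of $\,d\theta=\phi\hs\xi\,$ reads $\,d\tilde\theta=\tilde\phi\,dt$, so $\,\tilde\theta\,$ is locally constant on level sets of $\,t$. Invariance of $\,dt\,$ under the deck action yields a homomorphism $\,\tau:\pi_1M\to\bbR$, $\,t\circ\gamma=t+\tau(\gamma)$, with image $\,\vg\subset\bbR$. Now choose a smooth vector field $\,V\,$ on $\,M\,$ with $\,\xi(V)\equiv1\,$ (available through a partition of unity since $\,\xi\,$ is nowhere zero); compactness of $\,M\,$ makes $\,V\,$ complete, so its lift $\,\hat V\,$ to $\,\hm\,$ is complete too, with $\,dt(\hat V)\equiv1$. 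Letting $\,N=t^{-1}(0)\,$ and $\,\Psi_s\,$ be the flow of $\,\hat V$, the map $\,(s,y)\mapsto\Psi_s(y)\,$ is a diffeomorphism $\,\bbR\times N\to\hm\,$ under which $\,t\,$ becomes the projection onto $\,\bbR$; connectedness of $\,\hm\,$ then gives that of $\,N$, which proves (c). Since each slice $\,\{s\}\times N\,$ is connected, $\,\tilde\theta$---locally constant on it---is in fact constant there, so $\,\tilde\theta=\hf\circ t\,$ for a unique smooth $\,\hf:\bbR\to\bbR$.

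The decisive step, on which the rest of the proof rests, is to deduce that $\,\vg=c\hs\bbZ\,$ with some $\,c>0$. The $\,\pi_1M$-invariance of $\,\tilde\theta\,$ translates to $\,\hf(s+\tau(\gamma))=\hf(s)\,$ for every $\,\gamma$, i.e., $\,\hf\,$ is invariant under every translation in $\,\vg$. If $\,\vg\,$ were dense in $\,\bbR$, continuity would force $\,\hf\,$ constant, whence $\,\theta\,$ constant, $\,\phi\hs\xi=d\theta=0$, and, since $\,\xi\ne0\,$ everywhere, $\,\phi\equiv0$---contradicting the hypothesis. If $\,\vg=\{0\}$, $\,t\,$ would descend to a global primitive of $\,\xi\,$ on compact $\,M$, impossible because $\,\xi\,$ is nowhere zero. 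Thus $\,\vg=c\hs\bbZ\,$ for some $\,c>0$, $\,\hf\,$ admits $\,c\,$ as a period, and $\,\hf\,$ is nonconstant because $\,\phi\not\equiv0$---this is (b). Finally, $\,t\bmod c\,$ descends to a surjective submersion $\,\bar t:M\to\bbR/c\hs\bbZ\,$ whose fibre over $\,[a]\,$ equals $\,\pi(t^{-1}(a))$, which is connected (as the continuous image of the connected slice $\,t^{-1}(a)\simeq\{a\}\times N$) and compact (as a closed subset of the compact $\,M$); hence $\,\bar t\,$ is the required bundle projection, proving (a).
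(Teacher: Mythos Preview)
Your proof is correct and follows essentially the same route as the paper's: both construct a complete vector field on $\hm$ on which $dt$ evaluates to $1$ (the paper uses the normalized gradient $\hn t/\hg(\hn t,\hn t)$ for an auxiliary Riemannian metric, you use a partition-of-unity section of $\xi=1$), use its flow to get the product decomposition (c), deduce $\widehat\theta=\hf\circ t$ from connectedness of the slices, and read off the period group from $\pi_1M$-invariance. Your explicit trichotomy for $\vg$ (dense, trivial, or $c\bbZ$) spells out a step the paper leaves implicit, but the logic is the same.
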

\begin{proof}Let $\,\hg\,$ be the pull\-back to $\,\hm\,$ of any fixed 
Riemannian metric $\,g\,$ on $\,M$. The $\,\hg$-gra\-di\-ent $\,\hn t\,$ of 
$\,t\,$ gives rise to the vector field $\,w=\hn t/\hg(\hn t,\hn t)$, which is 
complete, being the pull\-back of the vector field $\,u/g(u,u)\,$ on $\,M$, 
for $\,u\,$ such that $\,\xi=g(u,\,\cdot\,)$. A standard argument 
\cite[p.\ 12]{milnor} using the flow of $\,w$, for which $\,t\,$ itself serves 
as the parameter, yields (c). Denoting by $\,\widehat\theta\,$ and 
$\,\widehat\phi\,$ the pull\-backs of $\,\theta\hs$ and $\,\phi\hs$ to $\,\hm$, 
we have $\,\hn\widehat\theta=\widehat\phi\hs\hn t$. Hence $\,\widehat\theta\,$ 
is, locally, a function of $\,t$. The word `locally' can 
in turn be dropped as the level sets of $\,t\,$ are connected by (c). Thus, 
$\,\widehat\theta=\hf(t)\,$ (that is, $\,\widehat\theta=\hf\nnh\circ t$) for 
some function $\,\hf:\bbR\to\bbR\hs$. Since $\,\,\theta\,$ and 
$\,\,\widehat\theta\,$ are nonconstant, so is $\,\hf$.

The invariance of $\,dt\,$ under the action of the deck transformation group 
$\,\Gm=\pi_1M$ implies that $\,t\circ\alpha=t+\varXi(\alpha)\,$ for some 
homomorphism $\,\varXi:\Gm\to\bbR\,$ and all $\,\alpha\in\Gm$. As 
$\,\widehat\theta=\hf(t)\,$ is $\,\Gm$-in\-var\-i\-ant, every nonzero value of 
$\,\varXi\,$ is a period of $\,\hf$. Thus, $\,\varXi(\Gm)=c\hh\bbZ$ for 
some $\,c\in(0,\infty)$, and (b) follows. Finally, the surjective submersion 
$\,t:\hm\to\bbR$ descends to a mapping $\,M=\hm\nnh/\Gm\to\bbR/c\hh\bbZ$, 
which must be a surjective submersion as well. In addition, for each 
$\,s\in\bbR\hs$, the pre\-im\-age of $\,s+c\hh\bbZ\,$ under the latter mapping 
is connected, as it coincides with the image of $\,t^{-1}(s)\subset\hm\,$ 
under the covering projection $\,\hm\to M$, and $\,t^{-1}(s)\subset\hm\,$ 
is connected by (c). Combined with Remark~\ref{fibra}, this proves (a).
\end{proof}
Given a connection $\,\nabla\,$ in a \vb\ $\,\xe\hs$ over a \mf\ $\,M$, 
a section $\,\psi\,$ of $\,\xe$, and \vf s $\,u,v\,$ tangent to $\,M$, 
our sign convention for the curvature tensor $\,R=R^\nabla$ is 
\begin{equation}\label{cur}
R(u,v)\psi\hskip7pt=\hskip7pt\nabla_{\!v}\nabla_{\!u}\psi\,
-\,\nabla_{\!u}\nabla_{\!v}\psi\,+\,\nabla_{[u,v]}\psi\hs.
\end{equation}
Such $\,\nabla\nh,\hs\xe,M,u\,$ and $\,v\,$ give rise to the bundle morphism
\begin{equation}\label{cvo}
R^\nabla\nnh(u,v):\xe\,\to\,\xe
\end{equation}
sending a section $\,\psi\,$ of $\,\xe\hs$ to 
$\,R^\nabla\nnh(u,v)\psi=R(u,v)\psi\,$ defined 
in (\ref{cur}).

We always denote by $\,\nabla\,$ both the \lcc\ of a given \prd\ $\,(M,g)$, 
and the $\,g$-gra\-di\-ent operator. The same symbol $\,\nabla\,$ is also 
used for connections induced by $\,\nabla\,$ in $\,\nabla$\prl\ subbundles of 
$\,\tm\,$ and their quotients.

A {\it pseu\-\hbox{do\hs-}Riem\-ann\-i\-an fibre metric\/} $\,\gm\,$ in a \vb\ 
$\,\mathcal{E}\,$ over a \mf\ $\,M\,$ is, as usual, any family of 
nondegenerate symmetric bilinear forms $\,\gm_x$ in the fibres 
$\,\mathcal{E}_x$ that constitutes a $\,C^\infty$ section of the symmetric 
power $\,(\mathcal{E}^*){}^{\odot2}\nnh$.
\begin{remark}\label{scfco}Every simply connected manifold $\,N$ with a 
complete flat tor\-sion\-free connection $\,\nabla\,$ is \feic\ to a 
Euclidean space: the exponential mapping of $\,\nabla\hs$ at any point 
$\,x\in N\hs$ is an affine \feo\ $\,\txn\to N\nh$. See 
\cite[p.\ 145]{auslander-markus}.
\end{remark}
Given a flat connection $\,\nabla\,$ in a \vb\ $\,\xe\hs$ of fibre dimension 
$\,k\,$ over a \mf\ $\,M$, we will say that $\,\xe\hs$ {\it is trivialized by 
its parallel sections\/} if the space of $\,\nabla$\prl\ global sections of 
$\,\xe\hs$ is $\,k$\diml\ (or, in other words, $\,\nabla\,$ is {\it globally} 
flat).
\begin{lemma}\label{flcpl}If a compact manifold\/ $\,N$ with a flat 
tor\-sion\-free connection\/ $\,\nabla\hs$ admits a $\,\nabla$\prl\ 
distribution $\,\lz\hs$ such that both bundles $\,\lz\hs$ and\/ 
$\,\qt=\tn\nh/\lz$, with the flat connections induced by $\,\nabla$, are 
trivialized by their parallel sections, then\/ $\,\nabla\hs$ is complete.
\end{lemma}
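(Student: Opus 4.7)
My plan is to introduce a convenient global frame of $\tn$ built from the given parallel sections, write the geodesic equation in that frame, and observe that it decouples into a constant equation together with a linear ODE whose coefficients are bounded by compactness of $N$; Gronwall together with the standard escape criterion will then force every geodesic to be defined for all time.

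I would begin by choosing a parallel global frame $v_1,\ldots,v_\ell$ of $\lz$ and a parallel global frame $\bar u_1,\ldots,\bar u_m$ of $\qt=\tn/\lz$ (with $\ell+m=\dim N$), both furnished by the hypothesis. Next, using any auxiliary Riemannian metric on $N$ to supply a smooth subbundle complementary to $\lz$, I lift each $\bar u_j$ to a smooth vector field $u_j$ on $N$, producing a smooth global frame $\{v_i,u_j\}$ of $\tn$. Since $\nabla v_i=0$ and the $\qt$-projection of $\nabla u_j$ equals $\nabla\bar u_j=0$, each $\nabla u_j$ is a section of $T^*N\otimes\lz$, and I may write $\nabla u_j=\sum_i A^i_j\otimes v_i$ for smooth $1$-forms $A^i_j$ on $N$.

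For a geodesic $\gm$, I expand $\dot\gm=\sum_i a^i(t)v_i+\sum_j b^j(t)u_j$ and compute $\nabla_{\dot\gm}\dot\gm$ using only $\nabla v_i=0$ and the formula for $\nabla u_j$; the result splits into an $\lz$-part $\sum_i\bigl[\dot a^i+\sum_j b^j A^i_j(\dot\gm)\bigr]v_i$ and a transverse part $\sum_j\dot b^j u_j$. The geodesic equation $\nabla_{\dot\gm}\dot\gm=0$ therefore amounts to (i) each $b^j$ being constant along $\gm$, together with (ii) $\dot a^i=-\sum_j b^j A^i_j(\dot\gm)$. Substituting $\dot\gm=\sum_k a^k v_k+\sum_l b^l u_l$ turns (ii) into a linear non-autonomous ODE for $(a^i(t))$ whose coefficients are continuous functions of $\gm(t)\in N$.

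Since $N$ is compact, the functions $A^i_j(v_k)$ and $A^i_j(u_l)$ are globally bounded, and with the $b^j$ constant, Gronwall's inequality forces $(a^i(t))$ to grow at most exponentially on any bounded time interval contained in the domain of $\gm$. Consequently $\dot\gm(t)$ has bounded components in the fixed global frame, so $(\gm(t),\dot\gm(t))$ remains in a compact subset of $\tn$, and the standard escape criterion for the smooth geodesic spray forces $\gm$ to extend to all of $\bbR$, proving completeness. The one point that needs care is the algebraic decomposition of $\nabla_{\dot\gm}\dot\gm$: it is precisely $\nabla v_i=0$ that prevents any term quadratic in the $a^i$ from appearing, so that the equation for $(a^i)$ is genuinely linear and Gronwall applies.
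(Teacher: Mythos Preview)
Your proof is correct and takes a genuinely different route from the paper's. Both arguments rest on the same observation---after choosing a complement to $\lz$ and lifting the parallel frame of $\qt$, one has $\nabla v_i=0$ and $\nabla u_j\in\Gamma(T^*N\otimes\lz)$, so along any geodesic the $\qt$-components $b^j$ of the velocity are constant and the $\lz$-components $a^i$ satisfy a first-order relation with no quadratic term in the $a^i$. From there the paper proceeds \emph{constructively}: for a fixed lift $w\in\mv$ of a parallel section of $\qt$ it notes that $\nabla_w w$ is a section of $\lz$, takes an integral curve $x(t)$ of $w$ (complete since $N$ is compact), and then corrects it by $y(t)=\exp_{x(t)}\eta(t,x(t))$ with $\eta:\bbR\to\xv$ solving $\ddot\eta=-\zeta$ in the finite-dimensional space $\xv$ of parallel sections of $\lz$; locally in affine coordinates this reads $y=x+\eta$, $\ddot y=0$, and these curves realize all initial data. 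You instead argue \emph{analytically}: compactness of $N$ bounds the coefficients $A^i_j(v_k)$, $A^i_j(u_l)$ uniformly, Gronwall then bounds $|a(t)|$ on any finite subinterval of the maximal domain, and the escape criterion for the geodesic spray on $\tn$ forces that domain to be all of $\bbR$. The paper's argument is more geometric and makes the affine picture explicit; yours is shorter, uses only standard ODE tools, and sidesteps the (slightly delicate) check that the explicitly constructed geodesics exhaust all initial conditions.
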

\begin{proof}We denote by $\,\xv\,$ the space of parallel sections of 
$\,\lz$, and by $\,\hs\exp_{\hs x}$ the exponential mapping of $\,\nabla\hs$ 
at $\,x\in M$. Geodesics tangent to $\,\lz\,$ at some (or every) point, being 
integral curves of elements of $\,\xv$, are obviously complete.
 
Let $\,\yv\,$ be a vector subbundle of $\,\tn\hs$ such that 
$\,\tn=\lz\oplus\yv$, and let us fix $\,w\in\mv\nh$, where $\,\mv\hs$ is 
the vector space of sections of $\,\yv\,$ obtained as the image of the 
space of parallel sections of $\,\qt\hs$ under the obvious isomorphism 
$\,\qt\to\yv$. The vector field $\,v=\nabla_{\!w}w\,$ then is a section of 
$\,\lz$. (In fact, locally, $\,w=w\hh'\nh+\widetilde{\nh w\nh}$, for a 
section $\,w\hh'$ of $\,\lz\,$ and a local parallel \vf\ 
$\,\widetilde{\nh w\nh}$, so that $\,\nabla_{\!w}w=\nabla_{\!w}w\hh'\nnh$, 
while $\,\nabla_{\!w}w\hh'$ is a section of $\,\lz$.) Any integral curve 
$\,\bbR\ni t\mapsto x(t)\in N\hs$ of $\,w\,$ now gives rise to a function 
$\,\zeta:\bbR\to\xv\,$ defined by requiring that $\,\zeta(t)\in\xv\,$ have the 
value $\,v_{x(t)}$ at the point $\,x(t)$. Any function $\,\eta:\bbR\to\xv$ 
with the second derivative $\,\ddot\eta=-\hs\zeta\,$ leads in turn to the 
curve $\,\bbR\ni t\mapsto y(t)\in N\nh$, given by 
$\,y(t)=\exp_{\hs x(t)}\hh\eta(t,x(t))$, where $\,\eta(t,x)\in \txm\,$ is 
the value of $\,\eta(t)\in\xv\,$ at $\,x\in M$. That $\,t\mapsto y(t)\,$ is a 
geodesic is clear: a treating $\,N\nh$, locally, as an affine space, we have 
$\,y(t)=x(t)+\eta(t)$, and $\,\ddot y=\ddot x+\ddot\eta=0$. Since such 
geodesics realize all initial data, our assertion follows.
\end{proof}

Let $\,(t,s)\mapsto x(s,t)\,$ be a fixed {\it variation of curves} in a \prd\ 
$\,(M,g)$, that is, an $\,M$-val\-ued $\,C^\infty$ mapping from a rectangle 
(product of intervals) in the $\,ts\hh$-plane. By a {\it vector field\/ $\,w\,$ 
along the variation} we mean, as usual, a section of the pull\-back of 
$\,\tm\,$ to the rectangle (so that $\,w(t,s)\in\txtsm$). Examples are $\,x_s$ 
and $\,x_t$, which assign to $\,(t,s)\,$ the velocity of the curve 
$\,t\mapsto x(t,s)\,$ (or, $\,s\mapsto x(t,s)$) at $\,s\,$ (or $\,t$). Further 
examples are provided by restrictions to the variation of vector fields on 
$\,M$. The partial covariant derivatives of a vector field $\,w\,$ along the 
variation are the vector fields $\,w_t,\hs w_s$ along the variation, obtained 
by differentiating $\,w\,$ covariantly along the curves $\,t\mapsto x(t,s)\,$ 
or $\,s\mapsto x(t,s)$. Skipping parentheses, we write $\,w_{ts},\hs w_{stt}$, 
etc., rather than $\,(w_t){}_s,\hs ((w_s){}_t){}_t$ for higher-or\-der 
derivatives, as well as $\,x_{ss},\hs x_{st}$ instead of 
$\,(x_s){}_s,\hs (x_s){}_t$. One always has $\,w_{ts}=w_{st}+R(x_t,x_s)\hh w$, 
cf.\ \cite[formula (11.2) on p.\ 493]{dillen-verstraelen}, and, since the 
\lcc\ $\,\nabla\,$ is tor\-sion\-free, $\,x_{st}=x_{ts}$. Consequently, 
$\,x_{ttss}\,=\,x_{stts}\,+\,[R(x_t,x_s)x_t]_s$, 
$\,x_{stts}\,=\,x_{stst}\,+\,R(x_t,x_s)x_{st}$ and 
$\,x_{stst}\,=\,x_{sstt}\,+\,[R(x_t,x_s)x_s]_t$. Thus, whenever 
$\,(t,s)\mapsto x(s,t)\,$ is a variation of curves in $\,M$,
\begin{equation}\label{xts}
\begin{array}{rl}
\mathrm{a)}&\hskip10ptx_{tts}\,\,=\,\,x_{stt}\,+\,\hs R(x_t,x_s)x_t\hs,\\
\mathrm{b)}&\hskip10ptx_{ttss}\,\,=\,\,x_{sstt}\,+\,\hs[R(x_t,x_s)x_s]_t
+\,\hs R(x_t,x_s)x_{st}\,+\,\hs[R(x_t,x_s)x_t]_s\hs.
\end{array}
\end{equation}

\section{Con\-for\-mal\-ly symmetric manifolds}\label{csma}
The Schouten tensor $\,\sigma\,$ and Weyl conformal tensor $\,W\hs$ of a \prd\ 
$\,(M,g)\,$ of dimension $\,n\ge4\,$ are given by 
$\,\sigma=\ri\,-\hs(2n-2)^{-1}\,\text{\rm s}\hskip1.2ptg$, with $\,\ri\,$ 
denoting the Ric\-ci tensor, 
$\,\hs\text{\rm s}\hs=\hs\text{\rm tr}_g\hh\ri\hs\,$ standing for the 
scalar curvature, and $\,W\nh=R-(n-2)^{-1}\hs g\wedge\hh\sigma$. Here 
$\,\wedge\,$ is the exterior multiplication of $\,1$-forms valued in 
$\,1$-forms, which involves the ordinary $\,\wedge\,$ as the val\-ue\-wise 
multiplication; thus, $\,g\wedge\hh\sigma\,$ is a $\,2$-form valued in 
$\,2$-forms.
\begin{lemma}\label{rwcod}For any \ecs\ manifold of dimension $\,n\ge4$,
\begin{enumerate}
  \def\theenumi{{\rm\alph{enumi}}}
\item $R\hs\,=\,\hs W\hs+\,(n-2)^{-1}\hs g\wedge\ri$,
\item the Ric\-ci tensor $\,\ri\,$ satisfies the Co\-daz\-zi equation, in the 
sense that the three-times covariant tensor field\/ $\,\nabla\nh\ri\,$ is 
totally symmetric.
\end{enumerate}
\end{lemma}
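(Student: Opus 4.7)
My plan is to reduce (a) to the assertion that the scalar curvature vanishes on any \ecs\ \mf, and then to derive (b) from (a) together with the second Bianchi identity. Comparing the claimed identity in (a) with the defining formulas $\,W=R-(n-2)^{-1}g\wedge\sigma\,$ and $\,\sigma=\ri-(2n-2)^{-1}s\hh g\,$ shows that (a) is equivalent to $\,s\hh(g\wedge g)=0$; since $\,g\wedge g\,$ is a nonzero (Kulkarni--Nomizu-type) tensor, this amounts to $\,s=0\,$ identically. To establish $\,s=0$, I would first extract the Codazzi equation $\,\nabla_c\sigma_{bd}=\nabla_d\sigma_{bc}\,$ for the Schouten tensor from the standard divergence identity $\,\nabla^aW_{abcd}=\tfrac{n-3}{n-2}(\nabla_c\sigma_{bd}-\nabla_d\sigma_{bc})\,$ combined with $\,\nabla W=0\,$ and $\,n\ge4$. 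Re-expressed in terms of $\,\ri\,$ via the once-contracted Bianchi identity $\,\nabla^b\ri_{bc}=\tfrac12\nabla_c s$, this becomes $\,\nabla_c\ri_{bd}-\nabla_d\ri_{bc}=\tfrac{1}{2(n-1)}(g_{bd}\nabla_c s-g_{bc}\nabla_d s)$. Direct tracing produces only tautologies, so the conclusion $\,s=0\,$ must come from additional input---either the algebraic identity $\,R^\nabla\hs\cdot\hs W=0\,$ (a consequence of $\,\nabla W=0\,$ via the Ricci commutation formula), which after suitable contractions against the nonzero $\,W\,$ is expected to force $\,s=0$, or the explicit local classification of \ecs\ metrics from the cited earlier works.

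Once $\,s=0\,$ is granted, (a) follows at once from the definitions. For (b), I would apply $\,\nabla\,$ to (a): since $\,\nabla W=0\,$ and $\,\nabla g=0$, this yields $\,\nabla R=(n-2)^{-1}g\wedge\nabla\ri$. Substituting into the second Bianchi identity $\,\nabla_aR_{bcde}+\nabla_bR_{cade}+\nabla_cR_{abde}=0\,$ and expanding the Kulkarni--Nomizu products, the cyclic sum regroups as a combination of six $\,g_{ij}$-weighted copies of the antisymmetric-in-first-two-indices tensor $\,Q_{abc}=\nabla_a\ri_{bc}-\nabla_b\ri_{ac}$. Taking a single trace (say, with $\,g^{be}$) produces a relation of the form $\,(n-3)\hs Q_{acd}=-g_{cd}q_a-g_{ad}r_c$, where $\,q,r\,$ are proportional to $\,\nabla s$; since $\,s=0\,$ these correction terms vanish, so $\,Q=0\,$ for $\,n\ge4$. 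Together with the obvious symmetry $\,\ri_{bc}=\ri_{cb}$, this is the total symmetry of $\,\nabla\ri\,$ asserted in (b).

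The principal obstacle is the vanishing of the scalar curvature. The Codazzi equation for $\,\sigma\,$ alone does not suffice to force $\,s=0$, and both \ecs\ hypotheses---non-conformal-flatness (so $\,W\ne0$) and non-local-symmetry---must play an essential role, presumably via the algebraic consequences of $\,R^\nabla\hs\cdot\hs W=0\,$ or via the explicit local structure theory of \ecs\ metrics cited in the introduction. Once that step is in hand, the remainder of the argument is a routine manipulation of the Bianchi identity.
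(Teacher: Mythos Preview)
Your approach matches the paper's: both reduce (a) to the vanishing of the scalar curvature $s$, which the paper simply cites from \cite[Theorem~7]{derdzinski-roter-78} (your honest acknowledgement that this is the nontrivial step, requiring either the algebraic identity $R\cdot W=0$ or the earlier structure theory, is exactly right). For (b) the paper argues that $\nabla W=0$ forces the divergence of $W$ to vanish, which by the standard formula is equivalent to the Codazzi equation for $\sigma$; since $s=0$ gives $\sigma=\ri$, (b) follows.

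Note, however, that your own first paragraph already contains the full proof of (b): you derive the Codazzi equation for $\sigma$ from $\nabla W=0$ and the divergence identity, and then rewrite it as $\nabla_c\ri_{bd}-\nabla_d\ri_{bc}=(2n-2)^{-1}[g_{bd}\nabla_c s-g_{bc}\nabla_d s]$, which becomes $Q=0$ once $s=0$. Your second paragraph---differentiating (a), expanding $g\wedge\nabla\ri$ inside the Bianchi identity, and tracing---re-derives the same equation by a longer route and can be dropped.
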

\begin{proof}In any \ecs\ manifold, $\,\hs\text{\rm s}\hs=0\,$ identically 
\cite[Theorem~7, p.\ 21]{derdzinski-roter-78}, so that $\,\sigma=\ri$. This 
gives (a), and (b) follows since the condition $\,\nabla\hs W\nnh=0$ implies 
vanishing of the divergence of $\,W\nh$, which, in view of the second Bianchi 
identity, is equivalent to the Co\-daz\-zi equation for $\,\sigma$, cf.\ 
\cite[formula (5.29) on p.\ 460]{dillen-verstraelen}.
\end{proof}
Assertion (i) in the next lemma is due to Ol\-szak \cite{olszak}.
\begin{lemma}\label{csnpd}Let\/ $\,\dz\,$ be the \od\ of an \ecs\ \mf\/ 
$\,(M,g)$, defined in the Introduction. Then 
\begin{enumerate}
  \def\theenumi{{\rm\roman{enumi}}}
\item $\dz\,$ is a \npd\ of dimension\/ $\,1\,$ or\/ $\,2$,
\item at every point\/ $\,x$, the space $\,\dz_x$ contains the image of the 
Ric\-ci tensor treated, with the aid of\/ $\,g_x$, as a linear operator 
$\,\txm\to \txm\nh$,
\item $R(v,v\hh'\nnh,\,\cdot\,,\,\cdot\,)
=\hs W(v,v\hh'\nnh,\,\cdot\,,\,\cdot\,)\hs\,=\,\hs0\hs\,$ whenever $\,v\,$ and 
$\,v\hh'$ are sections of $\,\dzp\nnh$,
\item of the connections in the vector bundles $\,\dz\hs$ and\/ 
$\,\qt=\dzp\nnh/\hh\dz$, induced by the \lcc\ of\/ $\,g$, the latter is 
always flat, and the former is flat if\/ $\,\dz\hs$ is one\diml.
\end{enumerate}
\end{lemma}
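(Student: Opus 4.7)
My plan is to take (i) as stated, citing Ol\-szak \cite{olszak}: since $\dz$ is defined pointwise from the parallel tensors $W$ and $g$, its $\nabla$-par\-al\-lel\-ness is automatic, while the null property and the bound on $\dim \dz$ require Ol\-szak's algebraic analysis, which I would quote. The remaining parts I would derive in the order (iii) for $W$, then (ii), then (iii) for $R$, then (iv).

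The Weyl half of (iii) unpacks the definition of $\dz$ directly. Writing $\xi = g(u,\,\cdot\,)$ for $u \in \dzx$, the condition $\xi \wedge W(v,v',\,\cdot\,,\,\cdot\,) = 0$ says that the $2$-form $W(v,v',\,\cdot\,,\,\cdot\,)$ is divisible by $\xi$, so $W(v,v',a,b) = \xi(a)\eta(b) - \xi(b)\eta(a)$ for some $\eta = \eta_{v,v'}$; for $a, b \in \dzxp$ both $\xi(a)$ and $\xi(b)$ vanish, giving $W(v,v',a,b) = 0$. The pair symmetry $W(v,v',a,b) = W(a,b,v,v')$ converts this into the vanishing when $v,v' \in \dzp$ instead.

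Part (ii) is the main obstacle: I need to show $\hri(\txm) \subset \dzx$. My plan is to combine the algebraic structure of $W$ produced by the Weyl half of (iii) (in an adapted null basis the only possibly nonzero components of $W$ involve, in each pair of indices, a vector outside $\dzp$) with the tracelessness of $W$, the Co\-daz\-zi equation for $\ri$ from Lemma~\ref{rwcod}(b), and the algebraic identity $R \cdot W = 0$ produced by $\nabla W = 0$ through the Ric\-ci identity for parallel tensors. The goal is to deduce $\ri(v,w) = 0$ whenever $v \in \dzp$ and $w \in \txm$, which by symmetry of $\ri$ and nullness of $\dz$ amounts to $\hri(\txm) \subset (\dzxp)^\perp = \dzx$.

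Granting (ii), the remaining assertions follow by direct algebra. The Ric\-ci half of (iii) comes from $\hri(v) \in \dz \subset \dzp$ and $\dz$ being null, so $\ri(v,w) = g(\hri(v),w) = 0$ for $v \in \txm$ and $w \in \dzp$, and by symmetry of $\ri$ the same vanishing for $v \in \dzp$ and any $w$; hence $(g \wedge \ri)(v,v',\,\cdot\,,\,\cdot\,)$ vanishes on $\dzp \times \dzp$, and Lemma~\ref{rwcod}(a) together with the Weyl half of (iii) yields $R(v,v',\,\cdot\,,\,\cdot\,) = 0$. For flatness of $\qt$ in (iv): pair symmetry of $R$ turns the Ric\-ci half of (iii) into $g(R(u,v)w, z) = 0$ for $w, z \in \dzp$, forcing $R(u,v)w \in (\dzp)^\perp = \dz$ whenever $w \in \dzp$, so the induced curvature on $\qt = \dzp/\dz$ vanishes. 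For flatness of $\dz$ when $\dim \dz = 1$: $R(u,v)w_0 = c(u,v)w_0$ for some $2$-form $c$, since $\dz$ is parallel; the Ol\-szak condition at $w_0$ combined with (ii) gives $W(u,v,w_0,z) = R(u,v,w_0,z) = c(u,v)\,\xi_0(z)$ with $\xi_0 = g(w_0,\,\cdot\,)$, and the first Bi\-an\-chi identity for $W$ then forces $c \wedge \xi_0 = 0$ as a $3$-form, so $c = \xi_0 \wedge \beta$ for some $1$-form $\beta$; tracelessness of $W$ constrains $\beta$ to vanish on $\dzp$, and non-degeneracy of $g$ then gives $\beta \in \bbR\,\xi_0$, so $c = 0$.
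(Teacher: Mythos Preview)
The paper's own proof of this lemma is a one-line citation to \cite{derdzinski-roter-07}; it does not give any argument. So there is no detailed proof in the present paper to compare against, and your proposal is an attempt to actually supply one.

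Your reductions are sound. The Weyl half of (iii) follows directly from the definition of $\dz$ as you say: divisibility of $W(v,v',\cdot,\cdot)$ by $\xi=g(u,\cdot)$ kills it on $\dz^\perp\times\dz^\perp$, and pair symmetry swaps the two pairs of slots. Granting (ii), your derivations of the Ricci half of (iii) and of both flatness statements in (iv) check out. In particular, your argument for flatness of $\dz$ when $\dim\dz=1$ --- writing $R(u,v)w_0=c(u,v)\,w_0$, using (ii) and nullness of $\dz$ to identify $W(u,v,w_0,z)=c(u,v)\,\xi_0(z)$, then extracting $c=\xi_0\wedge\beta$ from the first Bianchi identity and $\beta|_{\dz^\perp}=0$ from the vanishing Ricci contraction of $W$ --- is a clean direct route and is correct upon computing that contraction.

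The genuine gap is (ii). You correctly flag it as the main obstacle, but what you give is a list of ingredients --- the block structure of $W$, tracelessness, the Codazzi equation for $\ri$, and the integrability condition $R\cdot W=0$ --- with no indication of how they combine to force $\ri(v,\cdot)=0$ for $v\in\dz^\perp$. The conclusion is a pointwise algebraic relation between $\ri$ and $W$, whereas Codazzi and $R\cdot W=0$ are differential constraints; bridging the two takes a real argument, and in the sources this step is nontrivial (it is related to the analysis in \cite{derdzinski-roter-78}, cf.\ the paper's later use of Theorem~8 there). If you intend, like the paper, to cite \cite{derdzinski-roter-07} for this step, say so explicitly; if you intend to prove it, you must supply the argument, and as written the proposal does not.
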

\begin{proof}See \cite[Lemmas 2.1(ii) and 2.2]{derdzinski-roter-07}.
\end{proof}
For a $\,k\,$ times covariant tensor field $\,B\,$ on a \prd\ $\,(M,g)$, 
$\,k\ge1$, and a point $\,x\in M$, we denote by $\,\kb_x$ the subspace of 
$\,\txm\,$ formed by all vectors $\,v\,$ with 
$\,B_x(v,\,\cdot\,,\ldots,\,\cdot\,)=0$. Its orthogonal complement 
$\,(\kb_x)^\perp$ is the {\it image} of $\,B_x$, that is, the subspace of 
$\,\txm\,$ spanned by vectors $\,u\in \txm\,$ such that 
$\,g(u,\,\cdot\,)=B(\,\cdot\,,u_2,\dots,u_k)\,$ for some 
$\,u_2,\dots,u_k\in \txm$. If $\,B\,$ is parallel, the spaces $\,\kb_x$ and 
$\,(\kb_x)^\perp$ form parallel distributions $\,\kb\,$ and $\,(\kb)^\perp$ on 
$\,M$.

\begin{remark}\label{oldis}Given an \ecs\ \mf\ $\,(M,g)\,$ such that 
the \od\ $\,\dz\,$ is $\,2$\diml, we have 
$\,W\nh=\hs\ve\hskip1pt\om\nh\otimes\om\,$ for some $\,\ve=\pm\hs1\,$ and a 
parallel differential $\,2$-form $\,\hs\om\hs\,$ with 
$\,\hs\text{\rm rank}\,\hs\om\hs=2$, defined, at each point of $\,M$, only up 
to a sign. In addition,
\begin{equation}\label{owo}
\dz\,=\,(\text{\rm Ker}\,\hs\om)^\perp\nh,
\end{equation}
In fact, if $\,x\in M\,$ and $\,u,v,v\hh'$ are vector fields chosen so that 
$\,u_x\in\dz_x\smallsetminus\{0\}\,$ and $\,\hs\varOmega_x\ne0$, for 
$\,\hs\varOmega\hs=W(v,v\hh'\nnh,\,\cdot\,,\,\cdot\,)$, then the $\,2$-form 
$\,\varOmega_x$ is $\,\wedge$-di\-vis\-i\-ble by $\,\xi_{\hs x}$, where 
$\,\xi=g(u,\,\cdot\,)\,$ (cf.\ the definition of $\,\dz$). Thus, if $\,\dz\,$ 
is $\,2$\diml, the image of the Weyl tensor $\,W_{\nh x}$ acting on exterior 
$\,2$-forms is spanned by $\,\xi\wedge\xi\hs'\nnh$, where 
$\,\xi=g(u,\,\cdot\,)\,$ and $\,\xi\hs'\nh=g(u\hh'\nnh,\,\cdot\,)\,$ for 
any basis $\,u,u\hh'$ of $\,\dzx$. Since $\,W_{\nh x}$ acting on $\,2$-forms 
is self-ad\-joint, our claim follows, (\ref{owo}) being immediate as 
$\,(\text{\rm Ker}\,\hs\om)^\perp$ is the image of $\,\hs\om\hh$.
\end{remark}
Next, at any point $\,x\,$ of any \ecs\ \mf,
\begin{equation}\label{krp}
\mathrm{a)}\hskip12pt
(\kri_x)^\perp\subset\hs\dz_x\subset\hs\dzxp\subset\,\kri_x\hs,\hskip30pt
\mathrm{b)}\hskip12pt\dz\,\subset\,\hs\kw\hs,
\end{equation}
where $\,\dz\,$ is the \od\ and $\,\ri\,$ denotes the Ric\-ci tensor. Namely, 
the first inclusion in (a) follows from Lemma~\ref{csnpd}(ii) (as 
$\,(\kri_x)^\perp$ is the image of $\,\ri_x$), the second from 
Lemma~\ref{csnpd}(i), and the third from the first. For (b), we consider the 
two possible values $\,\rd\in\{1,2\}\,$ of the dimension of $\,\dz\,$ (see 
Lemma~\ref{csnpd}(i)). If $\,\rd=2$, we have $\,\dz=(\kw)^\perp\nnh$, that is, 
$\,\dz\,$ equals the image of $\,W\hs$ (which coincides with the image 
$\,(\text{\rm Ker}\,\hs\om)^\perp$ of $\,\hs\om\hh$, cf. (\ref{owo})), while 
$\,\dz\subset\dzp$ by Lemma~\ref{csnpd}(i). Now let $\,\rd=1$. As $\,W\hs$ and 
$\,\dzp$ are both parallel, it suffices to establish (b) at any fixed 
$\,x\in M\,$ with $\,\ri_x\ne0$. (Note that $\,g\,$ is not Ric\-ci-flat.) Now 
the image $\,(\kri_x)^\perp$ of $\,\ri_x$ is contained in $\,\kw_{\nh x}$ 
according to \cite[Theorem 8(c) on p.\ 22]{derdzinski-roter-78}, while 
$\,\dz_x=(\kri_x)^\perp$ by Lemma~\ref{csnpd}(ii), which yields (b).

\section{Proof of Theorem~\ref{pontr}, first part}\label{pfta}
The phrase `up to a factor' means, in this section, {\it up to a nonzero 
constant factor, which may depend on the dimensions involved.}

Given a pseu\-\hbox{do\hs-}Euclid\-e\-an inner product $\,\lr\,$ in 
an oriented \rvs\ $\,\mv\hs$ of even dimension $\,\er=2m$, let $\,\varTheta\,$ 
be the volume form, with $\,\varTheta(e_1,\dots,e_\er)=1\,$ for any 
pos\-i\-tive-o\-ri\-ent\-ed orthonormal basis $\,e_1,\dots,e_\er$ of 
$\,\mv\nnh$. We denote by $\,\hs\text{\rm Pf}\hs\,$ the {\it Pfaffian} 
function of $\,\lr$, assigning to an $\,m$-tuple of linear operators 
$\,S_j:\mv\to\mv\nnh$, which are all skew-ad\-joint relative to $\,\lr$, the 
value $\,s=\hs\text{\rm Pf}\,(S_1,\dots,S_m)\in\bbR\,$ such that 
$\,\zeta_1\nnh\wedge\ldots\wedge\zeta_m\nh=s\hs\varTheta$, with the 
$\,2$-forms $\,\zeta_j$ characterized by 
$\,\zeta_j(u,v)=\lg S_ju,v\rg\,$ for all $\,u,v\in\mv\nnh$.
\begin{lemma}\label{pfzer}For $\,S_j:\mv\to\mv\hs$ as above, 
$\,\hs\text{\rm Pf}\,(S_1,\dots,S_m)=0\,$ if\/ 
$\,\bigcap_{\hh j=1}^{\hh m}\text{\rm Ker}\,S_j\ne\{0\}$.
\end{lemma}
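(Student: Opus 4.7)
The plan is to exploit the interior product (contraction) with a nonzero vector $\,v_0\in\bigcap_{j=1}^m\text{\rm Ker}\,S_j\hs$, turning the hypothesis into a statement about the wedge product $\,\zeta_1\wedge\ldots\wedge\zeta_m$.

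First I would pick any $\,v_0\ne0\,$ in the common kernel. Since each $\,S_j\hs$ is skew-ad\-joint with respect to $\,\lr$, the relation $\,S_jv_0=0\,$ gives, for every $\,u\in\mv$,
\[
\zeta_j(v_0,u)\,=\,\lg S_jv_0,u\rg\,=\,0\hs,\qquad
\zeta_j(u,v_0)\,=\,\lg S_ju,v_0\rg\,=\,-\hs\lg u,S_jv_0\rg\,=\,0\hs.
\]
Thus the interior product $\,\iota_{v_0}\zeta_j\,$ vanishes for each $\,j\in\{1,\dots,m\}$.

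Next I would apply the graded Leib\-niz rule for contraction of a product of $\,2$-forms: since each $\,\zeta_j\,$ has even degree,
\[
\iota_{v_0}(\zeta_1\wedge\ldots\wedge\zeta_m)\,
=\,\sum_{j=1}^{m}\zeta_1\wedge\ldots\wedge\iota_{v_0}\zeta_j\wedge\ldots\wedge\zeta_m\,=\,0\hs.
\]
By the defining equation $\,\zeta_1\wedge\ldots\wedge\zeta_m=s\hs\varTheta\,$ this yields $\,s\hs\iota_{v_0}\varTheta=0$. Finally, since $\,\varTheta\,$ is a nowhere-zero volume form on $\,\mv\,$ and $\,v_0\ne0$, the $\,(\er-1)$-form $\,\iota_{v_0}\varTheta\,$ is nonzero (in any basis of $\,\mv\,$ containing $\,v_0$ as its first element, it is, up to sign, the wedge of the remaining dual basis covectors). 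Consequently $\,s=0$, which is the required conclusion.

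I expect no real obstacle: the argument is a direct three-line computation once one has the right viewpoint, namely to regard the Pfaffian as extracting the coefficient of the volume form in a wedge product and to use the interior product as the tool that detects degeneracy of that wedge product. The only subtlety to verify carefully is the sign convention for the Leib\-niz rule on forms of even degree, which here enters trivially since all the $\,\zeta_j\,$ are of the same (even) degree.
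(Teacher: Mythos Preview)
Your argument is correct. The paper's proof uses the same underlying observation---that each $\,\zeta_j\,$ annihilates every vector in the common kernel $\,\mv'=\bigcap_j\text{\rm Ker}\,S_j$---but packages the conclusion differently: it notes that each $\,\zeta_j\,$ is the pull\-back of a $2$-form on the quotient $\,\mv/\mv'$, so the $\,2m$-form $\,\zeta_1\wedge\ldots\wedge\zeta_m\,$ is pulled back from a space of dimension strictly less than $\,2m=\dim\mv$, and therefore vanishes. Your interior-product computation and the paper's pull\-back/dimension-count are two phrasings of the same linear-algebra fact; the paper's version is a line shorter, while yours makes the nondegeneracy of $\,\varTheta\,$ (equivalently, the fact that a top-degree form on a smaller space must vanish) visible as an explicit final step.
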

\begin{proof}Our $\,\zeta_j$ are pull\-backs to $\,\mv\hs$ of some $\,2$-forms 
in the quotient space $\,\mv\nnh/\mv\hh'\nnh$, where 
$\,\mv\hh'\nnh=\bigcap_{\hh j=1}^{\hh m}\text{\rm Ker}\,S_j$. Hence 
$\,\zeta_1\nnh\wedge\ldots\wedge\zeta_m\nh=0\,$ if 
$\,\dim\hs(\mv\nnh/\mv\hh'\hh)<m=\dim\mv\nh$.
\end{proof}
Given an oriented real vector bundle $\,\mathcal{E}\hs$ of fibre dimension 
$\,\er\ge1\,$ over a manifold $\,M$, let a pair $\,(\nabla,\gm)\,$ consist of 
a connection $\,\nabla\,$ and a $\,\nabla$\prl\ 
pseu\-\hbox{do\hs-}Riem\-ann\-i\-an fibre metric $\,\gm\,$ in $\,\mathcal{E}$. 
The {\it Euler form} of $\,(\nabla,\gm)\,$ then is the differential 
$\,\er$-form on $\,M\,$ equal to $\,0$, when $\,\er\hs$ is odd, and for even 
$\,\er\hs$ obtained, up to a factor, by skew-sym\-met\-riza\-tion of the 
$\,\er\hs$ times covariant tensor field that sends vector fields 
$\,v_1,\dots,v_\er$ to $\,\hs\text{\rm Pf}\,
(R^\nabla\nnh(v_1,v_2),\dots,R^\nabla\nnh(v_{\er-1},v_\er))$, cf.\ 
(\ref{cvo}), with $\,\hs\text{\rm Pf}\hs\,$ as above for 
$\,\mv\nh=\hs\mathcal{E}_x$, $\,x\in M$.

The Euler form of $\,(\nabla,\gm)\,$ is closed, and represents in cohomology 
the real Euler class of the oriented bundle $\,\mathcal{E}$. See 
\cite{avez-f}, \cite{chern}, \cite{law}, \cite{bell}.

Similarly, the real Pontryagin classes 
$\,p_i(\mathcal{E})\in H^{4i}(M,\bbR)\,$ of a real vector bundle 
$\,\mathcal{E}\,$ over a manifold $\,M\,$ are the cohomology classes of the 
Pontryagin forms of any connection $\,\nabla\,$ in $\,\mathcal{E}$, given by 
explicit formulae involving the curvature tensor $\,R=R^\nabla\nnh$. To prove 
vanishing of the Pontryagin forms (and classes) under some specific 
assumptions, one may instead use what we call here the {\it generating forms}, 
the cohomology classes of which form another set of generators for the 
Pontryagin algebra (the subalgebra of $\,H^*\nnh(M,\bbR)\,$ generated by all 
$\,p_i(\mathcal{E})$). The $\,i\hs$th generating form of $\,\nabla$, for any 
integer $\,i\ge1$, is the differential $\,4i$-form on $\,M\nh\,$ obtained, up 
to a factor, by skew-sym\-met\-riza\-tion of the $\,4i\,$ times covariant 
tensor field sending $\,v_1,\dots,v_{4i}$ to $\,\hs\text{\rm tr}\,
[R^\nabla\nnh(v_1,v_2)\circ\ldots\circ R^\nabla\nnh(v_{4i-1},v_{4i})]$, with 
$\,R^\nabla\nnh(v,v\hh'\hh)\,$ as in (\ref{cvo}). See \cite{chern-gcc}.

In the case where $\,\nabla\,$ is the \lcc\ of a \prd\ $\,(M,g)\,$ and 
$\,\xe=\tm$, we speak of the {\it Euler form\/} and {\it generating forms 
of\/} $\,(M,g)$.

Theorem~\ref{pontr} (minus the claim about $\,\pi_1M$) is immediate from 
the following lemma.
\begin{lemma}\label{eupoz}For an oriented \ecs\ manifold\/ $\,(M,g)\,$ of 
any dimension $\,n\ge4$, the Euler form and all Pontryagin forms are 
identically zero.
\end{lemma}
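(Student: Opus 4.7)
The plan rests on one structural fact: for any vector fields $u,v$, the curvature operator $R(u,v):\txm\to\txm$ maps $\dzp$ into $\dz$. This follows from Lemma~\ref{rwcod}(a), using the pair symmetry of $W$ together with Lemma~\ref{csnpd}(iii) (for the $W$-summand), and using $\dzp\subset\kri$ from (\ref{krp})(a) (for the $g\wedge\ri$-summand, whose $\ri$-coefficients are killed whenever the third slot lies in $\dzp$).

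Two pointwise consequences follow. First, the $2$-form $\zeta(u,v):=g(R(u,v)\,\cdot\,,\,\cdot\,)$ on $\txm$ vanishes on $\dzp\times\dzp$, so it lies in the ideal of $\bigwedge^*\txm^*$ generated by the $d$-dimensional annihilator of $\dzp$; explicitly, $\zeta(u,v)=\sum_{i=1}^d q_i\wedge\eta_i(u,v)$ for $1$-forms $q_i$ spanning $(\dzp)^\circ$ and some $1$-forms $\eta_i(u,v)$, where $d:=\dim\dz\in\{1,2\}$. Second, for $d=2$, the block $A(u,v)=R(u,v)|_\dz$ has matrix entries of the form $A_{ij}(u,v)=(n-2)^{-1}\hs(q_j\wedge p_i)(u,v)$ with $p_i:=g(\,\cdot\,,\ri^\sharp f_i)$, and the key observation is that $p_i\in\text{\rm span}\{q_1,q_2\}$, because $\ri^\sharp f_i\in\dz$ by Lemma~\ref{csnpd}(ii), so $p_i$ is a combination of the $q_k=g(\,\cdot\,,e_k)$ (for $e_1,e_2$ a basis of $\dz$ and $f_1,f_2$ the dual basis transversal to $\dzp$). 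For $d=1$, flatness of the induced connection on $\dz$ (Lemma~\ref{csnpd}(iv)) gives $A\equiv0$ outright.

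The Pontryagin forms vanish as follows: the block-triangular form of $R(u,v)$ (diagonal blocks $A$, $0$, $-A^T$) yields $\text{\rm tr}(R^{\wedge k})=2\hs\text{\rm tr}(A^{\wedge k})$ for every even $k$; each $A_{ij}$ lies in $\bigwedge^2\text{\rm span}\{q_1,\dots,q_d\}$, a space of dimension $\binom{d}{2}\le1$, so the matrix product $A^{\wedge 2}$ has entries that are wedges of two elements of a $\le1$-dimensional space and hence vanish; iterating, $\text{\rm tr}(R^{\wedge k})=0$ for all $k\ge2$, and every generating form of the Pontryagin algebra is zero. For the Euler form, the Pfaffian $\zeta_1\wedge\cdots\wedge\zeta_{n/2}$ expands by the first consequence above as a sum of wedges each consisting of $n/2$ factors drawn from the $d$-element pool $\{q_1,\dots,q_d\}$; when $n/2>d$---i.e., in every case except $(n,d)=(4,2)$---some $q_i$ repeats in each summand and the whole wedge vanishes identically; in the borderline case $n=2d=4$ the Pfaffian is generically nonzero pointwise, but its antisymmetrization over the four vector arguments is an expression of the same Pontryagin type as above and so vanishes by the same mechanism.

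The main obstacle is the $d=2$ case: then $R(u,v)$ is no longer nilpotent, no nonzero vector lies in $\bigcap_{u,v}\text{\rm Ker}\,R(u,v)$, and Lemma~\ref{pfzer} does not apply directly. The resolution rests on the observation that $\text{\rm image}\,\ri^\sharp\subset\dz$ confines every $1$-form relevant to the characteristic forms into a pool of dimension $d\le2$, forcing wedges of three or more such forms to vanish automatically.
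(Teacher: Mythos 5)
Your approach is genuinely different from the paper's. You extract from Lemma~\ref{rwcod}(a), Lemma~\ref{csnpd}(ii)--(iii), and~(\ref{krp}) the uniform structural fact that $R(u,v)$ maps $\dzp$ into $\dz$, giving a block-triangular form of the curvature operator, and then reduce all characteristic forms to wedges of $2$-forms confined to the at-most-one-dimensional space $\bigwedge^2(\dzp)^\circ$. This avoids Avez's theorem (replacing $R$ by $W$ in the generating forms), which the paper invokes, and avoids the paper's choice of a point where $\ri_x\ne0$; for the Pontryagin forms it is clean, correct, and arguably more conceptual than the paper's explicit rank-one computation of the operators $B_j$. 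The paper instead handles the Euler form via a basis $e_1,\dots,e_n$ of $\txm$ containing a basis of $\dzxp$, together with Lemma~\ref{pfzer} and Lemma~\ref{csnpd}(iii) in two cases; your exhaustion-of-the-pool argument for $\zeta_1\wedge\dots\wedge\zeta_{n/2}$ is a nice alternative when $n/2>d$, which covers everything except $(n,d)=(4,2)$.

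However, the borderline case $(n,d)=(4,2)$ (neutral signature $(2,2)$, which does occur for ECS four-manifolds) is a genuine gap. You acknowledge the Pfaffian may be nonzero pointwise there, and then assert that its antisymmetrization ``is an expression of the same Pontryagin type and so vanishes by the same mechanism.'' That is not a valid step: the Euler form is not a polynomial in $\text{tr}(R^{\wedge k})$, and the Pfaffian of a $4\times4$ skew matrix, $Z_{12}Z_{34}-Z_{13}Z_{24}+Z_{14}Z_{23}$, cannot be expressed through traces of products of the $Z$'s. Nothing in your block-triangularity or small-pool argument directly controls this quantity. The gap can be filled by the paper's method, which is purely combinatorial at this stage: fix a basis $e_1,\dots,e_4$ of $\txm$ with $e_1,e_2$ spanning $\dzxp$; for each of the three pairings of these four vectors, either both members of some pair lie in $\dzxp$ (so that $S_j=R(e_{2j-1},e_{2j})=0$ by Lemma~\ref{csnpd}(iii)), or each pair contains exactly one $e_i\in\dzxp$ (so that, by the argument following~(\ref{krp}), $\dz_x\subset\text{\rm Ker}\,S_j$ for every $j$ and Lemma~\ref{pfzer} gives $\text{\rm Pf}=0$). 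Either way each summand of the antisymmetrized Pfaffian vanishes, completing the argument. Without some such case analysis your proof of the Euler-form part is incomplete.
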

\begin{proof}We fix a point $\,x\in M$, an integer $\,i\,$ with 
$\,1\le i\le n/4$, where $\,n=\dim M$, and set 
$\,b=\hs\text{\rm tr}\,(B_1\circ\ldots\circ B_{2i})$, where 
$\,B_j=W_{\nh x}(u_{2j-1},u_{2j}):\txm\to\txm\,$ for linearly independent 
vectors $\,u_1,\dots,u_{4i}$ in $\,\txm$. (Notation as in (\ref{cvo}), with 
the Weyl tensor $\,W$ instead of $\,R$.) For vanishing of the Pontryagin 
forms, it suffices to prove that $\,b=0$, since, as shown by Avez 
\cite{avez-c}, in the definition of generating forms of $\,(M,g)\,$ one 
may replace the curvature tensor $\,R=R^\nabla$ by $\,W\nh$. We are thus 
allowed to choose $\,x\,$ at which the Ric\-ci tensor $\,\ri_x$ is nonzero: 
$\,W\hs$ is parallel, and hence so is the $\,i\hs$th generating form. For any 
fixed $\,v\hh'\nh\in\txm\,$ with $\,\ri_x(v\hh'\nnh,\,\cdot\,)\ne0$, 
Lemma~\ref{csnpd}(ii) implies that each $\,B_j$, when treated (with the aid of 
$\,g_x$) as a $\,2$-form at $\,x$, is $\,\wedge$-di\-vis\-i\-ble by 
$\,\ri_x(v\hh'\nnh,\,\cdot\,)$. In other words, 
$\,B_j:\txm\to\txm\,$ equals 
$\,g_x(w_j,\,\cdot\,)\hs v-\ri_x(v\hh'\nnh,\,\cdot\,)\hs w_j\,$ for some 
$\,w_j\in\txm\,$ and the unique $\,v\in\txm\,$ with 
$\,g_x(v,\,\cdot\,)=\ri_x(v\hh'\nnh,\,\cdot\,)$. Furthermore, 
$\,\dz_x\subset\kw_{\nh x}\subset\hs\text{\rm Ker}\,B_j$ and 
$\,\dz_x\subset\kri_x$ (see (\ref{krp})), so that, 
by Lemma~\ref{csnpd}(ii), $\,v\in\dz_x$ and $\,w_j\in\dzxp$ (as $\,w_j$ lies 
in the image $\,(\text{\rm Ker}\,B_j)^\perp$ of $\,B_j$). As $\,\dz\,$ is null 
(Lemma~\ref{csnpd}(i)), 
$\,B_1\nh\circ B_2=-\hs g_x(w_1,w_2)\ri_x(v\hh'\nnh,\,\cdot\,)\hs v$, 
$\,\hs\text{\rm tr}\,(B_1\nh\circ B_2)=0\,$ and 
$\,B_1\nh\circ B_2\nh\circ B_3=0\,$ if $\,i>1$, which implies that $\,b=0\,$ 
both for $\,i=1\,$ and $\,i>1$.

Now let $\,n\,$ be even. Given $\,x\in M$, we set 
$\,s=\hs\text{\rm Pf}\,(S_1,\dots,S_m)$, where $\,m=n/2\,$ and 
$\,S_j=R_x(e_{2j-1},e_{2j}):\txm\to\txm\,$ for a basis $\,e_1,\dots,e_n$ of 
$\,\txm\,$ containing a basis of $\,\dzxp$ (where $\,\dz\,$ is the \od). To 
obtain vanishing of the Euler form, we need to show that $\,s=0$. First, 
$\,s=0\,$ if, for each $\,j\in\{1,\dots,m\}$, at least one of the vectors 
$\,e_{2j-1},e_{2j}$ lies in $\,\dzxp\nnh$. In fact, Lemma~\ref{rwcod}(a) gives 
$\,R_x(v,u\hh'\nnh,u,\,\cdot\,)=0\,$ whenever $\,u\in\dz_x$, $\,v\in\dzxp$ and 
$\,u\hh'\nh\in\txm$, as $\,\dz_x\subset\kw_{\nh x}$ and 
$\,(g\wedge\ri)_x(v,u\hh'\nnh,u,\,\cdot\,)=0\,$ by (\ref{krp}); thus, 
$\,\bigcap_{\hh j=1}^{\hh m}\text{\rm Ker}\,S_j$ contains the subspace 
$\,\dz_x\ne\{0\}\,$ (cf.\ Lemma~\ref{csnpd}(i)), and Lemma~\ref{pfzer} shows 
that $\,s=0$.

In the remaining case, $\,e_{2j-1},e_{2j}\in\dzxp$ for some 
$\,j\in\{1,\dots,m\}$. Namely, if $\,\rd\,$ denotes the dimension of $\,\dz$, 
then $\,\dzp$ is $\,(2m-\rd)$\diml, with $\,\rd\le2\le m\,$ by 
Lemma~\ref{csnpd}(i). Among $\,e_1,\dots,e_{2m}$ there are 
$\,2m-\rd\ge m\,$ elements of $\,\dzxp\nnh$, so that one of the 
$\,m\,$ sets $\,\varSigma_j=\{e_{2j-1},e_{2j}\}\,$ must be contained in 
$\,\dzxp$ (or else  $\,\varSigma_j\nh\cap\dzxp$ would, for each 
$\,j=1,\dots,m$, have exactly one element, leading to a case which we already 
excluded).

Now that $\,e_{2j-1},e_{2j}\in\dzxp$ for some $\,j$, 
Lemma~\ref{csnpd}(iii) gives $\,S_j=0$, and hence $\,s=0$.
\end{proof}

\section{Proof of Theorem~\ref{parvf}}\label{pote}
In any \ecs\ \mf\ $\,(M,g)\,$ such that the \od\ $\,\dz\,$ is one\diml, 
setting $\,\qt=\dzp\nnh/\hh\dz$, one has a vec\-tor-bun\-dle morphism
\begin{equation}\label{phi}
\varPhi:(\dz^*)^{\otimes2}\to\hs(\qt^*){}^{\otimes2}
\end{equation}
defined as follows. Given $\,x\in M\,$ and $\,\lambda,\lambda'\in\dzxa$, we 
declare 
$\,\varPhi_x(\lambda\otimes\lambda'\hh):\qt_x\times\qt_x\to\bbR\,$ to be the 
symmetric bilinear form sending the cosets $\,v+\dz_x$ and 
$\,v\hh'\nh+\dz_x$ of vectors $\,v,v\hh'\nh\in\dzxp$ to 
$\,W_{\nh x}(v,u,u\hh'\nnh,v\hh'\hs)$, where $\,u,u\hh'\nh\in \txm\,$ are any 
vectors with $\,\lambda=g_x(u,\,\cdot\,)\,$ and 
$\,\lambda'\nh=g_x(u\hh'\nnh,\,\cdot\,)\,$ on $\,\dz_x$. Note that, as 
$\,\dz_x\subset\kw_{\nh x}$ by (\ref{krp}.b), the value $\,W_{\nh x}(v,u,u\hh'\nnh,v\hh'\hs)\,$ 
depends just on the $\,\dz_x$-co\-sets, rather than the vectors $\,v,v\hh'$ 
themselves, while, by Lemma~\ref{csnpd}(iii), 
$\,W_{\nh x}(v,u,u\hh'\nnh,v\hh'\hs)\,$ is not affected by how $\,u\,$ and 
$\,u\hh'$ were chosen: two such choices of either vector differ by an 
element of $\,\dzxp\nnh$.
\begin{remark}\label{paral}If $\,(M,g)\,$ is \ecs\ and $\,\dz\,$ is one\diml, 
the \lcc\ of $\,g\,$ induces flat connections in both bundles 
$\,(\dz^*)^{\otimes2}$ and $\,(\qt^*){}^{\otimes2}$ (by 
Lemma~\ref{csnpd}(iv)), and the morphism $\,\varPhi\,$ is
\begin{enumerate}
  \def\theenumi{{\rm\alph{enumi}}}
\item parallel relative to those connections,
\item nonzero, and hence injective, at every point $\,x\in M$.
\end{enumerate}
Here (a) states that $\,\varPhi\,$ is parallel as a section of the bundle 
$\,\hs\text{\rm Hom}\hs((\dz^*)^{\otimes2}\nnh,\hs(\qt^*){}^{\otimes2}\hh)\,$ 
with the induced flat connection, or, equivalently, that the 
$\,\varPhi$-im\-age of any parallel local section of $\,(\dz^*)^{\otimes2}$ 
is parallel in $\,(\qt^*){}^{\otimes2}\nnh$.

Assertion (a) is obvious from naturality of $\,\varPhi$, since $\,W\hs$ is 
parallel. To verify (b), note that if we had $\,W_{\nh x}(v,u,u,v\hh'\hs)=0\,$ 
for a fixed $\,u\in\txm\smallsetminus\dzxp$ and all 
$\,v,v\hh'\nh\in\dzxp\nnh$, the components of $\,W_{\nh x}$ in a basis 
consisting of $\,u\,$ and a basis of $\,\dzxp$ would all vanish (by 
Lemma~\ref{csnpd}(iii)), even though $\,(M,g)\,$ is not conformally flat.
\end{remark}
\begin{remark}\label{fibme}Given an \ecs\ \mf\ $\,(M,g)$, let $\,\dz\,$ and 
$\,\qt\,$ be as in (\ref{phi}). Since $\,\dz\,$ is the 
$\,g$-null\-space subbundle of $\,\dzp$ (cf.\ Lemma~\ref{csnpd}(i)), the 
metric $\,g$, restricted to $\,\dzp\nnh$, descends to a 
pseu\-\hbox{do\hs-}Riem\-ann\-i\-an fibre metric $\,\gm\,$ on $\,\qt$. 
Clearly, $\,\gm$ is parallel relative to the connection induced by the \lcc\ 
$\,\nabla\,$ of $\,g$. Being $\,\nabla$\prl, $\,\dzp$ is integrable and has 
totally geodesic leaves, and the \lcc\ of $\,g\,$ induces on each leaf a 
tor\-sion\-free connection, which is flat in view of Lemma~\ref{csnpd}(iii).

If the sign pattern of $\,g\,$ is $\,(i_-,i_+)$, with $\,i_-$ minuses and 
$\,i_+$ pluses, then $\,\gm\,$ has the sign pattern $\,(i_--\rd,i_+-\rd)$, 
where $\,\rd\,$ the dimension of the distribution $\,\dz$. This is clear if 
one chooses a subspace $\,\mv\hs$ of $\,\dzxp$ with 
$\,\dzxp\nh=\dz_x\oplus\mv\nnh$, for any $\,x\in M$, and notes that 
$\,\txm=\mv\nnh\oplus\mv^\perp\nnh$, while $\,\mv\hs$ and $\,\mv^\perp$ have 
the sign patterns equal to that of $\,\gm\,$ and, respectively, 
$\,(\rd,\rd)\,$ (as $\,\dz_x\subset\mv^\perp$).
\end{remark}
\begin{proof}[Proof of Theorem~{\rm\ref{parvf}}]Let $\,(M,g)\,$ be \ecs\ and 
Lo\-rentz\-i\-an, and let $\,\rd\,$ be the dimension of $\,\dz$. Then 
$\,\rd=1\,$ by Lemma~\ref{csnpd}(i), since $\,\rd\le1\,$ due to the 
Lo\-rentz\-i\-an sign pattern \mpdp\hs. (In fact, $\,\txm\,$ contains 
$\,\mv\oplus\dz_x$, for any $\,x\in M\,$ and any co\-di\-men\-sion-one 
subspace $\,\mv\subset \txm\,$ on which $\,g_x$ is positive definite.)

The parallel injective morphism $\,\varPhi\,$ in (\ref{phi}) now gives rise to 
a fibre norm $\,|\hskip2.3pt|\,$ in the line bundle $\,\dz$, which is parallel 
(invariant under parallel transports). Namely, for $\,\hs x\in M$ and 
$\,u\in\dz_x\nh\smallsetminus\{0\}$, we set 
$\,|u|=|\hs\varPhi_x(\lambda\otimes\lambda)|^{-1/2}\nnh$, where 
$\,\lambda\in\dzxa$ is chosen so that $\,\lambda(u)=1$, and the latter 
$\,|\hskip2.3pt|\,$ is the fibre norm in $\,(\qt^*){}^{\otimes2}$ 
corresponding to the fibre metric $\,\gm\,$ in $\,\qt$. Note that $\,\gm\,$ is 
positive definite as $\,\rd=1\,$ (see Remark~\ref{fibme}). Since a 
$\,|\hskip2.3pt|$-unit section of $\,\dz$ is parallel, this proves 
Theorem~\ref{parvf}.
\end{proof}

\section{Proof of Theorem~\ref{bdcir}}\label{potb}
Let an \ecs\ \mf\ $\,(M,g)\,$ satisfy one of the conditions
\begin{enumerate}
  \def\theenumi{{\rm\roman{enumi}}}
\item $M\,$ is simply connected and the \od\ $\,\dz\,$ is one\diml, or
\item $g\,$ is Lo\-rentz\-i\-an and $\,\dz\,$ is trivial as a real line 
bundle, cf.\ Theorem~\ref{parvf}.
\end{enumerate}
Then there exist functions $\,\psi,\phi:M\to\bbR\,$ and a vector field 
$\,u\,$ on $\,M\,$ such that
\begin{enumerate}
  \def\theenumi{{\rm\alph{enumi}}}
\item $u\,$ is parallel, nonzero, and spans $\,\dz$,
\item the $\,1$-form $\,\xi=g(u,\,\cdot\,)\,$ is parallel, the Ric\-ci tensor 
$\,\ri\,$ equals $\,\psi\,\xi\otimes\xi$, and $\,d\psi=\phi\,\xi$,
\item $\phi\,$ is nonconstant if $\,M\,$ is compact.
\end{enumerate}
Still assuming (i) or (ii), we define a vec\-tor-bun\-dle morphism 
$\,A:\qt\to\qt\hs$ by \hbox{requiring that} $\,\gm_x(A_x\eta,\,\cdot\,)
=[\varPhi_x(\lambda\otimes\lambda)](\eta,\,\cdot\,)\,$ for $\,x\in M\,$ and 
$\,\eta\in\qt_x$ with $\,\lambda\in\dz_x^*$ such that $\,\lambda(u_x)=1$. 
(Notation of Lemma~\ref{csnpd}(iv), Remark~\ref{fibme} and (\ref{phi}).) Then
\begin{enumerate}
  \def\theenumi{{\rm\alph{enumi}}}
\item[(d)] $A\,$ is $\,\nabla$\prl\ as a section of 
$\,\hs\text{\rm Hom}\hs(\qt,\qt)$, nonzero, self-ad\-joint relative to 
$\,\gm\,$ and traceless at every point,
\item[(e)] in the case where $\,A_x:\qt_x\to\qt_x$ has 
$\,n-2=\dim\qt_x$ distinct eigenvalues at some/every point 
$\,x\in M$, for $\,n=\dim M\ge4$, the bundle $\,\qt\hs$ over $\,M\,$ is 
an orthogonal direct sum of $\,\nabla$\prl\ real-line subbundles.
\end{enumerate}
Under the assumption (i), there exists a function $\,t:M\to\bbR\,$ such that, 
for $\,\xi=g(u,\,\cdot\,)$,
\begin{enumerate}
  \def\theenumi{{\rm\alph{enumi}}}
\item[(f)] $\xi=\hh dt\,$ (or, equivalently, $\,u=\nabla t$) and $\,\psi\,$ is, 
locally, a function of $\,t$.
\end{enumerate}
In fact, $\,u\,$ in (a) exists in view of Lemma~\ref{csnpd}(iv) and 
Theorem~\ref{parvf}, while (b), for some $\,\psi\,$ and $\,\phi$, follows from 
Lemma~\ref{csnpd}(ii), since $\,\nabla\nh\ri=d\psi\otimes\xi\otimes\xi\,$ is 
totally symmetric by Lemma~\ref{rwcod}(b). Now, if $\,\phi\,$ were constant 
and $\,M\,$ compact, 
$\,d\psi\,$ would be parallel (as $\,\xi\,$ is), and so, being zero 
somewhere, $\,d\psi\,$ would vanish identically. However, $\,\psi\,$ is 
nonconstant, since $\,\ri\,$ cannot be parallel: $\,g\,$ is conformally 
symmetric but not locally symmetric. This contradiction proves (c). Next, 
(d) holds in view of Remark~\ref{paral} and 
Theorem~\ref{parvf}, with trace\-less\-ness of $\,A\,$ due to vanishing of 
the contractions of $\,W\nh$. Assertion (e) is now immediate, the subbundles 
in question being the eigenspace bundles of $\,A$. Finally, $\,\xi\,$ is 
parallel, and hence closed, so that (b) implies (f).
\begin{proof}[Proof of Theorem~{\rm\ref{bdcir}}]Let $\hs(M,g)\hs$ be a compact 
\ecs\ Lo\-rentz\-i\-an \mf. Theorem~\ref{parvf} allows us to assume that 
$\,(M,g)\,$ admits a global parallel vector field $\,u\,$ spanning the 
one\diml\ \npd\ $\,\dz$. Condition (ii) above is therefore satisfied, 
which implies (b), while the function $\,\phi\,$ in (b) is nonconstant 
by (c). Our assertion is now immediate from Lemmas~\ref{clext} 
and~\ref{csnpd}(iii).
\end{proof}
\begin{remark}\label{concl}Let $\,(M,g)\,$ be any compact \ecs\ 
Lo\-rentz\-i\-an \mf\ such that the \od\ $\,\dz\,$ is trivial as a real line 
bundle. Choosing $\,\xi,\phi,\psi\,$ as in (a) -- (c), and $\,t\,$ as in (f) 
(where, for $\,t\,$ to exist, we use instead of $\,(M,g)\,$ its universal 
covering \mf\ $\,(\hm,\hg\hs)$), we see that $\,\xi,\phi,\psi\,$ and $\,t\,$ 
satisfy all the hypotheses of Lemma~\ref{clext}. Consequently, {\it they 
satisfy the conclusions of Lemma~{\rm\ref{clext}} as well}. This proves 
the claims immediately following Theorem~\ref{nolor} in the Introduction.
\end{remark}

\section{Examples}\label{xpls}
Suppose that we are given a nonconstant $\,C^\infty$ function 
$\,\fh:\bbR\to\bbR\hs$, a \rvs\ $\,\mv\hs$ of dimension $\,n-2\ge2\,$ with a 
pseu\-\hbox{do\hs-}Euclid\-e\-an inner product $\,\lr$, and a nonzero 
traceless linear operator $\,A:\mv\to\mv\nh$, self-ad\-joint relative to 
$\,\lr$. Following \cite{roter}, we use such data to define a \prc\ 
$\,\hg=\kx\,dt^2\hs+\,dt\,ds\,+\,\vh\,$ on the manifold 
$\,\hm=\rto\nh\times\mv\nh$, \feic\ to $\,\rn\nnh$, where products of 
differentials stand for symmetric products, $\,t,s\,$ are the Cartesian 
coordinates on the $\,\rto$ factor, $\,\vh\,$ denotes the pull\-back to 
$\,\hm\,$ of the flat \prc\ on $\,\mv$ corresponding to the inner product 
$\,\lr$, and $\,\kx:\hm\to\bbR\,$ is given by 
$\,\kx(t,s,v)=\fh(t)\hh\lg v,v\rg+\lg Av,v\rg$.

Let $\,\xe$ be the vector space of all $\,C^\infty$ solutions 
$\,u:\bbR\to\mv\,$ to the differential equation 
$\,\ddot u(t)=\fh(t)\hh u(t)+Au(t)$, and let $\,\bbP\,$ be the additive group 
of all $\,p\in\bbR\,$ with $\,\fh(t+p)=\fh(t)$ for every real $\,t$. The set 
$\,\gp=\bbP\times\bbR\times\xe\,$ 
has a unique group structure such that the formula 
$\,(p,q,u)\cdot(t,s,v)=(t+p,\hs s+q-\lg\dot u(t),2v+u(t)\rg,\hs v+u(t))$, for 
$\,(p,q,u)\in\gp\,$ and $\,(t,s,v)\in\hm=\rto\nh\times\mv\nh$, describes a 
group action of $\,\gp\hs$ on $\,\hm$. (See 
\cite[Section 2]{derdzinski-roter}.)
\begin{lemma}\label{ecsrr}For any choice of the above data 
$\,\fh,n,\mv\nnh,\lr\,$ and\/ $\,A$,
\begin{enumerate}
  \def\theenumi{{\rm\roman{enumi}}}
\item the metric\/ $\,\hg\,$ is \ecs,
\item the sign pattern of\/ $\,\hg\,$ arises from that of\/ $\,\lr\,$ by 
adding one plus and one minus,
\item the group\/ $\,\gp\hs$ acts on\/ $\,(\hm,\hg\hs)\,$ by isometries,
\item if\/ $\,n=4\,$ and the metric $\,\hg\,$ is Lo\-rentz\-i\-an,
\begin{enumerate}
  \def\theenumi{{\rm\alph{enumi}}}
\item[a)] $\gp\hs$ is a subgroup of finite index in the full isometry group 
of\/ $\,(\hm,\hg\hs)$,
\item[b)] $(\hm,\hg\hs)\hs$ is not the universal covering space of any 
compact \prd.
\end{enumerate}
\end{enumerate}
\end{lemma}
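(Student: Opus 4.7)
My plan is to verify (i)--(iv) in the order listed, with (i) as the main computational content and (iv.b) as the main conceptual obstacle.

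For (i), I would work in the adapted frame $\partial_t, \partial_s, e_1, \ldots, e_{n-2}$ on $\hm$, with $e_1, \ldots, e_{n-2}$ an $\lr$-or\-tho\-nor\-mal basis of $\mv$ lifted to parallel sections along the $\mv$-factor. Since $\hg(\partial_s,\,\cdot\,)$ is a constant multiple of the closed form $dt$, one gets $\nabla\partial_s = 0$; a direct computation of the remaining Chris\-tof\-fel symbols leaves a single independent Riemann component
\[ R(\partial_t, e_i, \partial_t, e_j) \,=\, -\bigl(\fh(t)\hs\delta_{ij} + A_{ij}\bigr),\qquad A_{ij} \,=\, \lg Ae_i, e_j\rg. \]
Tracing gives $\ri = (n-2)\hs\fh(t)\,dt\otimes dt$, the scalar curvature vanishes, and the single independent Weyl component $W(\partial_t, e_i, \partial_t, e_j) = -A_{ij}$ is constant in $(t, s, v)$. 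The same frame calculation then yields $\nabla W = 0$; meanwhile $A \ne 0$ gives $W \ne 0$, and non\-con\-stan\-cy of $\fh$ makes $\ri$, hence $R$, non-parallel, so $\hg$ is \ecs.

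Part (ii) follows at once, as the $\{\partial_t, \partial_s\}$-block of $\hg$ is nondegenerate with negative determinant and its $\hg$-or\-thog\-o\-nal complement is the $e_i$-span, carrying $\lr$. For (iii), I pull $\hg$ back under the given map $(p, q, u)$: after expanding $dt'\nh, ds'\nh, dv'$ the $\lg\dot u, dv\rg\hh dt$ contributions cancel, the $\vh$ and $dt\hh ds$ pieces are manifestly invariant, the relation $\fh(t+p) = \fh(t)$ matches the coefficient of $\lg v, v\rg\hh dt^2$, and the remaining $dt^2$ terms collapse to $\lg Av, v\rg$ upon applying the ODE $\ddot u = \fh u + Au$ together with $\lr$-self-ad\-joint\-ness of $A$. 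The group multiplication law is then forced by composing two such maps.

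For (iv.a), any isometry $\Psi$ preserves the \od\ $\dz$, which by Theorem~\ref{parvf} and the computation in (i) is $\bbR\hh\partial_s$, so $\Psi_*\partial_s = \pm\partial_s$. Since $dt$ is intrinsically determined by the parallel null line $\dz$, the coordinate $t$ is preserved up to an additive constant, and invariance of the curvature-determined function $\fh$ forces that constant into $\bbP$. A slice-by-slice analysis using the flat parallel structure on the $\mv$-direction, together with the fact that $A$ is recovered from $W$ up to the finite group of $\lr$-or\-thog\-o\-nal automorphisms of $\mv$ commuting with $A$, then shows that modulo these finitely many outer symmetries $\Psi$ lies in $\gp$; in the $n = 4$ Lorentzian setting the outer group is finite.

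For (iv.b), I argue by contradiction: suppose $(\hm, \hg) \to (M, g)$ is the universal cover of a compact $(M, g)$ with deck group $\Gamma$. By (iv.a), after replacing $M$ by a finite cover, I may assume $\Gamma \subset \gp$. The projection $\gp \to \bbP$, $(p, q, u) \mapsto p$, sends $\Gamma$ to a subgroup whose $\bbR$-quotient must be compact --- otherwise the descent of $t:\hm \to \bbR$ would have noncompact image --- forcing $\fh$ to be periodic and this image to be a sublattice of $\bbP \cong \bbZ$. The kernel $\Gamma_0 = \Gamma \cap (\{0\}\times\bbR\times\xe)$ must then act freely and cocompactly on each slice $\bbR\times\mv \cong \bbR^3$ through the explicit affine formula involving $u(t), \dot u(t)$ for $u \in \xe$. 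The main obstacle is the last step: when $n = 4$ and $\lr$ is positive definite on the two\diml\ $\mv$, the trace-free $A$ has eigenvalues $\pm\lambda$ with $\lambda > 0$, and one must show that the Floquet multipliers of the resulting Hill system $\ddot u_j = (\fh \pm \lambda) u_j$ are incompatible with the existence of such a discrete cocompact $\Gamma_0$. This rigidity step, specific to the sign-indefinite spectrum of $A$ in dimension $4$, is where I expect the real work to lie.
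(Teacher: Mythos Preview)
Your treatment of (i)--(iii) is correct and in fact more self-contained than the paper's: the paper simply cites \cite{roter} and \cite{derdzinski-roter} for (i) and (iii) and calls (ii) obvious. One small point: the paper's curvature convention (\ref{cur}) is the negative of the usual one, so the Ricci tensor there reads $\ri=(2-n)\hs\fh(t)\,dt\otimes dt$; your formula with $(n-2)$ is consistent with the standard sign, but make sure you track this when comparing.

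For (iv.a) your outline is close to the paper's argument, but you gloss over one step: an isometry sends $dt$ to $\pm\,dt$, not necessarily $+dt$, so $t$ may transform as $t\mapsto -t+c$ as well as $t\mapsto t+c$. The paper handles this by first passing to the index-$\le2$ subgroup $\gp'$ of isometries preserving $dt$, and then runs a clean chain of index equalities: surjectivity of $\gp'\to\bbP$ reduces to the kernel, transitivity of $\gp$ on each affine slice $\{t\}\times\bbR\times\mv$ reduces to the isotropy group at a point, and finally the isotropy consists of linear isometries of $\mv$ commuting with $A$, of which there are at most four (Remark~\ref{cmiso}). Your ``slice-by-slice analysis'' is this same idea, but the paper's packaging via the two index lemmas (I) and (II) is tidier and avoids any ambiguity about what ``outer symmetries'' means.

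For (iv.b) your reduction is exactly the one the paper makes: pass to a finite-index subgroup $\Gamma\cap\gp$, note that periodicity of $\fh$ is forced (the paper gets this from Lemma~\ref{clext}(b) via Remark~\ref{concl}), and then argue that no subgroup of $\gp$ can act on $\hm$ with compact quotient. The difference is that the paper does not carry out the Floquet/Hill analysis you sketch; it simply invokes \cite[Theorem~7.3]{derdzinski-roter} for this last step. So the ``main obstacle'' you identify is real, and it is precisely the content of that external theorem. If you want a self-contained argument you will indeed have to analyze the monodromy of the decoupled system $\ddot u_\pm=(\fh(t)\pm\lambda)\hs u_\pm$ and show it obstructs a cocompact lattice; otherwise, cite the same result the paper does.
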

\begin{proof}For (i), see \cite[Theorem 3]{roter} or 
\cite[Lemma 2.1]{derdzinski-roter}, while (ii) is obvious, and (iii) is 
immediate from \cite[Lemma 2.2]{derdzinski-roter}.

Generally, the index $\,\hs{\rm ind}\hs(\gp\hs'\nnh,\gp)\,$ of a subgroup 
$\,\gp\,$ in a group $\,\gp\hs'$ is the cardinality of the quotient set 
$\,\gp\hs'\nnh/\hs\gp\,$ consisting of all left co\-sets of $\,\gp\,$ in 
$\,\gp\hs'\nnh$. If $\,\hp\hs'\nh\subset\gp\hs'$ is a subgroup such that 
$\,\gp\hs'\nh=\hp\hs'\gp$, then, for $\,\hp=\gp\cap\hp\hs'\nnh$, the 
inclusion mapping $\,\hp\hs'\nh\to\gp\hs'$ clearly induces a bijection 
$\,\hp\hs'\nnh/\hs\hp\,\to\,\gp\hs'\nnh/\hs\gp$, and so 
$\,\hs{\rm ind}\hs(\gp\hs'\nnh,\gp)=\hs{\rm ind}\hs(\hp\hs'\nnh,\hp)$. 
Here are two special cases in which groups $\,\gp\hs'\nnh,\gp\,$ and 
$\,\hp\,$ satisfy the above assumption, and hence the conclusion:
\begin{enumerate}
  \def\theenumi{{\rm\roman{enumi}}}
\item[(I)] $\hp\hs'\nh=\hs\text{\rm Ker}\,\varPi\,$ for a group homomorphism 
$\,\varPi:\gp\hs'\nh\to\kp\,$ with $\,\varPi(\gp)=\kp\hs$,
\item[(II)] $\gp\hs'$ acts from the left on a set $\,Y\nh$, the action 
restricted to $\,\gp\hs$ is transitive, and $\,\hp\hs'$ is the isotropy 
subgroup of $\,\gp\hs'$ at some $\,y\in Y\nh$.
\end{enumerate}
Let $\,\hg\,$ now be Lo\-rentz\-i\-an, and let $\,\gp\hs'$ denote the group 
of those isometries of $\,\nh(\hm,\hg\hs)\hs$ 
which preserve the $\,1$-form $\,dt$. The Ric\-ci tensor of $\,\hg\,$ is given 
by $\,\ri=(2-n)\hs\fh(t)\,dt\otimes dt$, and $\,dt\,$ is parallel. (See 
\cite[p.\ 93]{roter}, where the sign convention for $\,\ri\,$ is the opposite 
of ours.) Thus, by Lemma~\ref{csnpd}(ii), the \od\ $\,\dz\,$ is spanned by the 
null parallel vector field $\,u=\nabla t$. Since $\,u\,$ can be naturally 
normalized with the aid of a parallel fibre norm in $\,\dz\,$ (see the end of 
Section~\ref{pote}), isometries of $\,(\hm,\hg\hs)\,$ leave $\,dt\,$ invariant 
up to a sign, so that the full isometry group of $\,(\hm,\hg\hs)\,$ contains 
$\,\gp\hs'$ as a subgroup of index at most $\,2$, and (iv-a) will follow if we 
show that $\,\hs{\rm ind}\hs(\gp\hs'\nnh,\gp)\,$ is finite.

As $\,\gp\hs'$ preserves $\,dt\,$ and $\,\ri=(2-n)\hs\fh(t)\,dt\otimes dt$, we 
have $\,t\circ\alpha=t+\varPi(\alpha)\,$ for some homomorphism 
$\,\varPi:\gp\hs'\nh\to\bbR\,$ and all $\,\alpha\in\gp\hs'\nnh$, while the 
function $\,\fh(t):\hm\to\bbR\,$ is $\,\gp\hs'\nh$\inv. Thus, 
$\,\varPi(\gp\hs'\hs)\,$ coincides with the additive group $\,\bbP$ defined 
earlier in this section, and so 
$\,\hs{\rm ind}\hs(\gp\hs'\nnh,\gp)
=\hs{\rm ind}\hs(\text{\rm Ker}\,\varPi,\gp\cap\hs\text{\rm Ker}\,\varPi)$, 
in view of (I) above for $\,\kp=\bbP\,$ and $\,\varPi:\gp\hs'\nh\to\bbP$.
Next, for any fixed $\,t\in\bbR\hs$, the action of 
$\,\hs\text{\rm Ker}\,\varPi\,$ leaves the affine subspace 
$\,\hmt=\{t\}\times\bbR\times\mv\hs$ of $\,\hm\,$ invariant, and the 
restriction of this action to 
$\,\gp\cap\hs\text{\rm Ker}\,\varPi$ is transitive on $\,\hmt$, since it 
consists of affine transformations realizing all translational parts in 
$\,\{0\}\times\bbR\times\mv\nh$. Consequently, (II) gives 
$\,\hs{\rm ind}\hs(\gp\hs'\nnh,\gp)=\hs{\rm ind}\hs(\hp\hs'\nnh,\hp)$, 
where $\,\hp\hs'$ is the isotropy subgroup of $\,\gp\hs'$ at any fixed 
$\,x\in\hm$, and $\,\hp=\gp\cap\hp\hs'\nnh$.

On the other hand, $\,\hs{\rm ind}\hs(\hp\hs'\nnh,\hp)\le4\,$ if $\,n=4$. 
Namely, the infinitesimal action of $\,\hp\hs'$ on 
$\,\txhm=\rto\nh\times\mv\hs$ preserves $\,dt_x$ and the vector 
$\,u_x=(0,1/2,0)$, along with the subspace $\,u_x^\perp\nh=\dzxp\nnh$. Hence 
the action descends to $\,\qt_x=\dzxp\nnh/\dz_x$, where it preserves the inner 
product $\,\gm_x$ (see Remark~\ref{fibme}) and commutes with the operator 
$\,A_x:\qt_x\to\qt_x$ which, under the obvious identification 
$\,\qt_x\approx\hs\mv\nnh$, coincides with our $\,A:\mv\to\mv\nh$. (See 
\cite[the description of $\,W\hs$ on p.\ 93]{roter}.) The differentials at 
$\,x\,$ of elements of $\,\hp\hs'\nnh$, acting on 
$\,\txhm=\rto\nh\times\mv\nnh$, thus have the form 
$\,(t,s,v)\mapsto(t,s+\varphi(v),Lv)$, where $\,L:\mv\to\mv\hs$ is a linear 
isometry commuting with $\,A$, and $\,\varphi\in\mv\hh^*\nnh$. As elements of 
the subgroup $\,\hp\,$ realize all $\,\varphi\in\mv\hh^*\nnh$, and have 
$\,L=\hs\text{\rm Id}\hh$, it follows that 
$\,\hs{\rm ind}\hs(\hp\hs'\nnh,\hp)\le4\,$ (see Remark~\ref{cmiso} below), 
which yields (iv-a). 

Finally, to prove (iv-b), we may suppose that, on the contrary, some group 
$\,\Gm\,$ of isometries of $\,(\hm,\hg\hs)\,$ acts on $\,\hm\,$ properly 
dis\-con\-tin\-u\-ous\-ly, producing a compact quotient manifold. The same 
is then true for the subgroup $\,\Gm\cap\gp\,$ of $\,\Gm\,$ (as 
$\,\Gm\cap\gp\,$ is of finite index in $\,\Gm$, by (iv-a)), which in turn 
contradicts \cite[Theorem~7.3]{derdzinski-roter}. Note that periodicity 
of $\,\fh\,$ as a function of $\,t$, required in \cite{derdzinski-roter}, 
follows from Lemma~\ref{clext}(b), cf.\ Remark~\ref{concl}.
\end{proof}
\begin{remark}\label{cmiso}For a nonzero traceless self-ad\-joint linear 
endomorphism $\,A\,$ of a pseu\-\hbox{do\hs-}Euclid\-e\-an plane $\,\mv\nh$, 
there exist at most four linear isometries $\,L:\mv\to\mv\hs$ commuting with 
$\,A$. This is clear when $\,A\,$ is di\-ag\-o\-nal\-izable, since $\,L\,$ 
must then send an orthonormal basis $\,(v,w)\,$ diagonalizing $\,A\,$ to 
$\,(\pm\hs v,\pm\hs w)\,$ or $\,(\pm\hs v,\mp\hs w)$. On the other hand, if a 
linear isometry $\,L\,$ commutes with $\,A\,$ and $\,A\,$ is 
non-di\-ag\-o\-nal\-izable (so that $\,\mv\hs$ is Lo\-rentz\-i\-an), we have 
$\,L=\pm\hs\text{\rm Id}\hh$. In fact, let $\,L\ne\pm\hs\text{\rm Id}\hh$. 
The two null lines in $\,\mv\hs$ are interchanged by $\,L$ (if they were 
preserved, $\,L\,$ would be di\-ag\-o\-nal\-izable, implying the same for 
$\,A$). However, choosing a basis $\,(v,w)\,$ of null vectors with $\,Lv=w\,$ 
and $\,Lw=v$, we would then again diagonalize $\,L\,$ (and hence $\,A$), 
this time with the eigenvectors $\,v\pm w$.
\end{remark}

\section{A classification theorem}\label{clth}
In the following theorem, $\,t\,$ denotes any fixed function $\,\hm\to\bbR\,$ 
such that $\,u=\nabla t\,$ is a global parallel vector field spanning the 
\od\ $\,\dz$. Such $\,t\,$ exists according to (f) in Section~\ref{potb}, 
cf.\ Theorem~\ref{parvf}.
\begin{theorem}\label{class}Let\/ $\,(\hm,\hg\hs)\,$ be a simply connected 
\ecs\ Lo\-rentz\-i\-an \mf\ of dimension $\,n\ge4\,$ such that the leaves of 
the parallel distribution $\,\dzp$ are all complete and the function 
$\,t:\hm\to\bbR\,$ satisfies condition\/ {\rm(c)} in Lemma~{\rm\ref{clext}}. 
Then, up to an isometry, $\,(\hm,\hg\hs)\,$ is one of the manifolds 
constructed in Section\/~{\rm\ref{xpls}}.
\end{theorem}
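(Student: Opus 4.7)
The plan is to read off from $(\hm,\hg)$ explicit data $(\fh,\mv,\lr,A)$ of the type used in Section~\ref{xpls}, and then construct an isometry from the resulting model onto $(\hm,\hg)$.

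First I would extract the data. Since $(\hm,\hg)$ is simply connected \ecs\ Lo\-rentz\-i\-an, Theorem~\ref{parvf} supplies a global parallel null vector field $u$ spanning the one\diml\ \od\ $\dz$, so hypothesis (i) of Section~\ref{potb} is in force; items (a)--(b) and (f) of that section then yield $\ri=\psi\,\xi\otimes\xi$ with $\xi=dt$ parallel and $\psi$ locally a function of $t$. The connectedness of the level sets of $t$ (Lemma~\ref{clext}(c), guaranteed by hypothesis) upgrades this to $\psi=\fh\circ t$ globally, for some $\fh:\bbR\to\bbR$, which is nonconstant because $(\hm,\hg)$ is not locally symmetric. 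Fix $x_0\in\hm$ with $t(x_0)=0$ and a null $w_0\in T_{x_0}\hm$ with $\hg_{x_0}(u_{x_0},w_0)=1$, and set $\mv:=w_0^\perp\cap\dzp_{x_0}$. This is an $(n-2)$\diml\ complement of $\dz_{x_0}$ in $\dzp_{x_0}$ on which $\hg_{x_0}$ is positive definite; call that inner product $\lr$. Transferring the parallel, nonzero, self-ad\-joint, traceless endomorphism $A$ of $\qt$ from Section~\ref{potb}(d) through the natural isomorphism $\mv\cong\qt_{x_0}$ furnishes an operator $A:\mv\to\mv$ of the kind required in Section~\ref{xpls}.

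Next I would build the isometry $\Phi:\bbR\times\bbR\times\mv\to\hm$. Let $\sigma$ be the geodesic with $\sigma(0)=x_0$ and $\dot\sigma(0)=w_0$; because $\nabla dt=0$, one has $t\circ\sigma(\tau)=\tau$, and parallel transport $P_\tau$ along $\sigma$ carries $w_0$ to $\dot\sigma(\tau)$ while preserving $\dz$ and $\dzp$, so that $P_\tau(\mv)$ is a complement of $\dz_{\sigma(\tau)}$ in $\dzp_{\sigma(\tau)}$. Each leaf $N_\tau:=t^{-1}(\tau)$ is complete by hypothesis, flat by Lemma~\ref{csnpd}(iii), and simply connected via Lemma~\ref{clext}(c); Remark~\ref{scfco} then promotes the leafwise exponential $\exp^{N_\tau}_{\sigma(\tau)}:\dzp_{\sigma(\tau)}\to N_\tau$ to a global affine \feo. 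Define
\[
\Phi(t,s,v)\,:=\,\exp^{N_t}_{\sigma(t)}\bigl(s\,u_{\sigma(t)}+P_t(v)\bigr).
\]
Granted that $\sigma$ is defined on all of $\bbR$, $\Phi$ is a \feo: $t\circ\Phi=t$, and each slice $\Phi(t,\cdot,\cdot)$ is a linear-affine bijection $\bbR\times\mv\to N_t$.

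The main obstacle has two parts. First, the completeness of $\sigma$: as $t\circ\sigma$ is linear, no blowup can happen in the $t$-direction, and escape within a leaf at a finite $\tau=\beta$ has to be ruled out by extending $\sigma$ past $\beta$ using the product structure of Lemma~\ref{clext}(c) together with the affine completeness of $N_\beta$. Second, and more substantive, one has to verify that $\Phi^*\hg$ coincides with the model metric $\kx\,dt^2+dt\,ds+h$ of Section~\ref{xpls}, where $\kx=\fh(t)\lg v,v\rg+\lg Av,v\rg$. The easy identities are forced by construction: $\Phi_*\partial_s=u$ gives $\hg(\Phi_*\partial_s,\Phi_*\partial_s)=0$, $\hg(\Phi_*\partial_s,\Phi_*\partial_t)=dt(\Phi_*\partial_t)=1$ and $\hg(\Phi_*\partial_s,\Phi_*\partial_{v^i})=0$, while parallelism along leaves and along $\sigma$ yields $\hg(\Phi_*\partial_{v^i},\Phi_*\partial_{v^j})=\lg v^i,v^j\rg$. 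The substantive content lies in the identifications $\hg(\Phi_*\partial_t,\Phi_*\partial_t)=\kx$ and $\hg(\Phi_*\partial_t,\Phi_*\partial_{v^i})=0$; my plan is to obtain these by applying the variation formulae (\ref{xts}) to the natural two-parameter sub-variations of $\Phi$, exploiting $\nabla W=0$, the Co\-daz\-zi equation of Lemma~\ref{rwcod}(b), the form $\ri=\fh(t)\,\xi\otimes\xi$, and the defining identity of $A$ through the morphism $\varPhi$ of Remark~\ref{paral}, and then integrating from the boundary conditions along $\sigma$ (where $s=v=0$).
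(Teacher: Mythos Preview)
Your overall architecture matches the paper's: extract the data $(\fh,\mv,\lr,A)$, pick a null geodesic $\sigma$ transverse to $\dzp$ parametrized by $t$, define the map via leaf exponentials along $\sigma$, and verify the pull\-back metric. The metric identification you sketch is essentially what the paper outsources to \cite[Lemma 5.1]{derdzinski-roter-07}, and your variation-formula plan for $\hg(\Phi_*\partial_t,\Phi_*\partial_t)$ and $\hg(\Phi_*\partial_t,\Phi_*\partial_{v^i})$ is the natural route. (A minor point: your $\fh$ is the paper's $\psi$, not its $\fh=(2-n)^{-1}\psi$; the latter normalization is what makes $\kx=\fh(t)\lg v,v\rg+\lg Av,v\rg$ come out right.)

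The genuine gap is the completeness of $\sigma$. Your sentence ``escape within a leaf at a finite $\tau=\beta$ has to be ruled out by extending $\sigma$ past $\beta$ using the product structure \ldots\ together with the affine completeness of $N_\beta$'' is not an argument. Writing $\sigma(\tau)=(\tau,n(\tau))$ in the product $\hm\cong\bbR\times N$ of Lemma~\ref{clext}(c), incompleteness at $\beta$ means $n(\tau)$ leaves every compact set of $N$ as $\tau\to\beta^-$; nothing you have said controls $n(\tau)$, and the flatness or completeness of the single leaf $N_\beta$ gives no a~priori bound on a curve that threads through \emph{all} leaves $N_\tau$ for $\tau<\beta$. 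The paper treats this as the main technical point and proves it separately as Lemma~\ref{compl}: rather than showing a pre-chosen geodesic is complete, it starts from an arbitrary curve $t\mapsto y(t)$ defined on all of $\bbR$ (available by Lemma~\ref{clext}(c)), solves the second-order ODE~(\ref{ode}) for a $\dzp$-tangent field $w$ along $y$, and shows via (\ref{xts}), (\ref{ruv}) and a delicate computation (\ref{xtt}) that $t\mapsto\exp_{y(t)}w(t)$ is a geodesic defined on $\bbR$, realizing every transverse initial condition. Only after this does the paper pick the null geodesic used to define the diffeomorphism. You need either this argument or a genuine substitute; as written, the existence of a globally defined $\sigma$ is assumed, not proved.
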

Our proof of Theorem~\ref{class}, given in Section~\ref{potc}, uses the facts 
presented below.

Let $\,(\hm,\hg\hs)\,$ be a simply connected \ecs\ \mf\ such that the \od\ 
$\,\dz\,$ is one\diml. If $\,v,v\hh'$ are sections of $\,\dzp$ and 
$\,u\,$ is a fixed nonzero parallel section of $\,\dz$, while $\,u\hh'$ is 
any vector field, then
\begin{equation}\label{ruv}
R(u\hh'\nnh,v)\hh v\hh'\,
=\,g(u\hh'\nnh,u)\hs[\hs\gm(A\uv,\uvp\hs)+\fh g(v,v\hh'\hh)\hh]\hs u\hs,
\end{equation}
where $\,\fh:\hm\to\bbR\,$ is given by $\,\fh=(2-n)^{-1}\psi$, for 
$\,n=\dim\hm\ge4$, with $\,\psi\,$ and $\,A\,$ defined as in 
Section~\ref{potb}, and $\,\uv\,$ denotes the image of $\,v\,$ under the 
quo\-tient-pro\-jec\-tion morphism $\,\dzp\nnh\to\qt=\dzp\nnh/\hh\dz$. (Thus, 
$\,\fh\,$ is characterized by $\,\ri=(2-n)\hs\fh\,\xi\otimes\xi$, for the 
parallel $\,1$-form $\,\xi=g(u,\,\cdot\,)=\hh dt$.)

In fact, by Lemma~\ref{csnpd}(iii), $\,R(u\hh'\nnh,v)\hh v\hh'$ is orthogonal 
to $\,\dzp\nnh$, and hence equals a function times $\,u$. Since both 
sides of (\ref{ruv}) are linear in $\,u\hh'\nnh$, (\ref{ruv}) will follow if, 
under the assumption $\,g(u\hh'\nnh,u)=1$, applying 
$\,g(u\hh'\nnh,\,\cdot\,)\,$ to both sides we obtain the same value. This 
last conclusion is in turn immediate from Lemma~\ref{rwcod}(a) combined with 
the definition of $\,A\,$ in Section~\ref{potb} and the relation 
$\,\ri=(2-n)\hs\fh\,\xi\otimes\xi$. (By (\ref{krp}.a), 
$\,\ri\hh(v,\,\cdot\,)=\ri\hh(v\hh'\nnh,\,\cdot\,)=0$.)
\begin{remark}\label{param}Let $\,(\hm,\hg\hs)\,$ satisfy the assumptions of 
Theorem~\ref{class}. We say that a curve $\,I\to\hm$, defined on an interval 
$\,I\subset\bbR\hs$, is {\it parametrized by the function\/} 
$\,t:\hm\to\bbR\,$ (chosen at the beginning of this section) if $\,t\,$ sends 
the image of the curve \feicly\ onto $\,I$. Such a curve may be written as 
$\,I\ni t\mapsto y(t)\in\hm$, with $\,t\,$ serving as the parameter.
\begin{enumerate}
  \def\theenumi{{\rm\alph{enumi}}}
\item Up to an affine re-pa\-ram\-e\-tri\-za\-tion, every geodesic in 
$\,(\hm,\hg\hs)$, not tangent to the distribution $\,\dzp\nnh$, is 
parametrized by the function $\,t$.
\item If a curve $\,\bbR\ni t\mapsto y(t)\in\hm\,$ is parametrized by the 
function $\,t$, then so is every curve $\,t\mapsto x(t,s)\,$ in the variation 
given by $\,x(t,s)=\hs\exp_{\hs y(t)}sw(t)$, where 
$\,\bbR\ni t\mapsto w(t)\in\dz_{y(t)}^\perp$ is any vector field along the 
original curve, tangent to $\,\dzp\nnh$.
\item For any curve $\,\bbR\ni t\mapsto y(t)\in\hm\,$ is parametrized by the 
function $\,t$, we have $\,g(\dot y,u)=1$, for $\,u=\nabla t$, and 
$\,\nabla_{\!\dot y}\dot y\,$ is tangent to $\,\dzp\nnh$.
\end{enumerate}
In fact, (a) and (b) follow since $\,\nabla dt=0$, so that $\,t\,$ restricted 
to any geodesic is an affine function of the geodesic parameter, while the 
leaves of $\,\dzp$ are totally geodesic (Remark~\ref{fibme}), and $\,t\,$ 
is constant on each leaf as $\,\dzp\nh=\hs\text{\rm Ker}\,dt$. Now (c) 
is immediate as $\,\dzp\nh=u^\perp$ and $\,u=\nabla t\,$ is parallel: 
differentiating $\,g(\dot y,u)=1$, we get $\,g(\nabla_{\!\dot y}\dot y,u)=0$.
\end{remark}
The following lemma is a crucial step in proving Theorem~\ref{class}.
\begin{lemma}\label{compl}Under the assumptions of Theorem~{\rm\ref{class}}, 
$\,(\hm,\hg\hs)\,$ is complete.
\end{lemma}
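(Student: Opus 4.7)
Geodesics whose velocity lies in $\dzp$ remain within a leaf of $\dzp$, since $\dzp$ is totally geodesic by Remark~\ref{fibme}; such geodesics are complete by hypothesis. For any other geodesic, $g(\dot y, u)$ is a nonzero constant along $y$ (as $u$ is parallel); after affine reparametrization we may assume $g(\dot y, u) = 1$, so that by Remark~\ref{param}(a), $y$ is parametrized by $t$. It thus suffices to show that, for every $x_0 \in \hm$ with $t(x_0) = 0$ and every $v \in T_{x_0}\hm$ satisfying $g(v, u_{x_0}) = 1$, there exists a geodesic $y : \bbR \to \hm$ with $y(0) = x_0$ and $\dot y(0) = v$.

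\textbf{Construction in pp-wave-like coordinates.} Using condition~(c) of Lemma~\ref{clext}, write $\hm = \bbR \times N$ and fix the reference curve $\alpha(t) = (t, z_0)$ with $(0, z_0) = x_0$. Let $P_t$ denote $\nabla$-parallel transport along $\alpha$ from $x_0$ to $\alpha(t)$; it preserves $\dzp$ and identifies $\dzp_{\alpha(t)}$ with the fixed vector space $W = \dzp_{x_0}$. Each leaf $L(t)$ of $\dzp$ is simply connected (as $\hm = \bbR \times N$ is), complete by hypothesis, and flat, by Lemma~\ref{csnpd}(iii) combined with the totally geodesic property from Remark~\ref{fibme}; hence by Remark~\ref{scfco} the leaf exponential $\exp^{L(t)}_{\alpha(t)} : \dzp_{\alpha(t)} \to L(t)$ is an affine diffeomorphism. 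Consequently $\Psi(t, \eta) := \exp^{L(t)}_{\alpha(t)}(P_t\eta)$ defines a global diffeomorphism $\Psi : \bbR \times W \to \hm$. The sought geodesic takes the form $y(t) = \Psi(t, \eta(t))$ for a curve $\eta : \bbR \to W$, and it suffices to produce such $\eta$, defined on all of $\bbR$, satisfying the geodesic equation and the prescribed initial conditions.

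\textbf{Linear ODE and main obstacle.} In the coordinates $(t, \eta)$ provided by $\Psi$, the geodesic equation $\nabla_{\dot y}\dot y = 0$ becomes a second-order ODE system for $\eta$. The combination of Lemma~\ref{csnpd}(iii), parallelism of $u$, and the flat totally geodesic leaf structure forces $\nabla_{\partial_{\eta^\alpha}}\partial_{\eta^\beta} \equiv 0$ throughout $\hm$; moreover (\ref{ruv}) shows that $R$ applied to $\partial_t$ paired with any $\dzp$-vector takes values in the null direction $u$, with coefficients depending only on $t$ via the function $f$ and on the fixed parallel operator $A$. Integrating (\ref{ruv}) inside the flat leaf expresses $\nabla_{\partial_t}\partial_{\eta^\alpha}$ as a linear function of $\eta$ with $t$-dependent coefficients. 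Decomposing $W = \bbR u_{x_0} \oplus W_0$ and writing $\eta(t) = \lambda(t) u_{x_0} + \mu(t)$ with $\mu \in W_0$, the $W_0$-component of the geodesic equation reduces to a linear second-order ODE $\ddot\mu = M(t)\mu + c(t)$, while the $u_{x_0}$-component yields $\ddot\lambda$ as a polynomial expression in $(\mu, \dot\mu)$ that, once $\mu$ is determined, integrates to give $\lambda$ by a single quadrature. Linear ODEs on $\bbR$ with smooth coefficients admit global solutions, so $\eta$ is defined on all of $\bbR$, proving the lemma. The main obstacle is the verification that the $W_0$-component is \emph{genuinely linear} in $\eta$, not merely polynomial; the decisive input is that (\ref{ruv}) forces all curvature contributions into the null direction $u \in \bbR u_{x_0}$, so that any potentially higher-order-in-$\eta$ terms are confined to the $\bbR u_{x_0}$ summand and do not contaminate the $W_0$-equation.
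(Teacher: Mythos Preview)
Your overall strategy---global coordinates $(t,\eta)$ on $\hm$, the geodesic equation written as an ODE in $\eta$, linearity in the $W_0$-component plus a quadrature in the $u$-direction---is sound and closely parallels the paper's variation-of-curves argument. But the justification you give for the crucial step has a real gap. The geodesic equation for $y(t)=\Psi(t,\eta(t))$ reads $\ddot\eta^\gamma\partial_{\eta^\gamma} = -\nabla_{\partial_t}\partial_t - 2\,\dot\eta^\alpha\nabla_{\partial_t}\partial_{\eta^\alpha}$, since $\nabla_{\partial_{\eta^\alpha}}\partial_{\eta^\beta}=0$. You correctly handle $\nabla_{\partial_t}\partial_{\eta^\alpha}$ via (\ref{ruv}), but you never address $\nabla_{\partial_t}\partial_t$ and how it depends on $\eta$. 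Its variation is $\nabla_{\partial_{\eta^\alpha}}\nabla_{\partial_t}\partial_t = \nabla_{\partial_t}\nabla_{\partial_{\eta^\alpha}}\partial_t - R(\partial_{\eta^\alpha},\partial_t)\partial_t$, and this last curvature term is \emph{not} confined to $\bbR u$: by the pair symmetry of $R$ and (\ref{ruv}), one has $g(R(\partial_{\eta^\alpha},\partial_t)\partial_t,\partial_{\eta^\beta}) = \gm(A\,\underline{\partial_{\eta^\beta}},\underline{\partial_{\eta^\alpha}}) + f\,g(\partial_{\eta^\beta},\partial_{\eta^\alpha})$, which is nonzero in general. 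So your assertion that ``(\ref{ruv}) forces all curvature contributions into the null direction $u$'' fails at exactly the point where it matters.

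The conclusion is nonetheless true, but for a different reason: $R(\partial_{\eta^\alpha},\partial_t)\partial_t$, though not in $\bbR u$, is \emph{independent of $\eta$}. Indeed, $\nabla_{\partial_{\eta^\beta}}[R(\partial_{\eta^\alpha},\partial_t)\partial_t]=0$, since $\nabla_{\partial_{\eta^\beta}}R=0$ (as $W$ and $dt$ are parallel while $f$ is constant on leaves), $\nabla_{\partial_{\eta^\beta}}\partial_{\eta^\alpha}=0$, and $\nabla_{\partial_{\eta^\beta}}\partial_t\in\bbR u$ lies in the kernel of $R$. Hence the $W_0$-part of $\nabla_{\partial_t}\partial_t$ is affine in $\eta$ after one integration, which is what you need. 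The paper's approach sidesteps this entirely: it studies the second leaf-derivative $x_{ttss}$ of the ``geodesic defect'' $x_{tt}$ via (\ref{xts}.b), and the only curvature terms that survive after invoking Lemma~\ref{csnpd}(iii) are of the form $R(x_t,x_s)w$ with $x_s,w$ sections of $\dzp$, so (\ref{ruv}) applies directly and $x_{ttss}$ is manifestly a multiple of $u$.
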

\begin{proof}Using (c) in Lemma~\ref{clext}, we may fix a curve 
$\,\bbR\ni t\mapsto y(t)\in\hm\,$ parametrized by the function $\,t\,$ (cf.\ 
Remark~\ref{param}), and consider the differential equation
\begin{equation}\label{ode}
\nabla_{\!\dot y}\nabla_{\!\dot y}w+R(\dot y,w)\hh\dot y
+\nabla_{\!\dot y}\dot y\,=\,-\hs Q(w)\hh u/4
\end{equation}
imposed on vector fields $\,w\,$ along the curve which are tangent to 
$\,\dzp\nnh$. Here $\,u=\nabla t\,$ and 
$\,Q(w)=3\hs[\gm(A\uw,\hs\uw)]\dot{\,}\nh
+3\fh\hs[g(w,w)]\dot{\,}\nh+2\dot\fh g(w,w)$, with $\,\fh,A,\uw\,$ as in 
(\ref{ruv}), and $\,[\hskip4pt]\dot{\,}\nh=\hh d/dt$. (Thus,
$\,\dot\fh=\hh\dfh(y(t))/dt$.) By Remark~\ref{param}(c), both sides of 
(\ref{ode}) are tangent to $\,\dzp\nnh$, that is, orthogonal to $\,u$, as 
$\,\nabla u=0\,$ and so $\,R(\,\cdot\,,\,\cdot\,,\,\cdot\,,u)=0$.

Every solution $\,w\,$ to (\ref{ode}) can be defined on the whole real line. 
Namely, this is true, due to linearity, for solutions $\,\widetilde w\,$ 
(tangent to $\,\dzp$) of the equation 
$\,\nabla_{\!\dot y}\nabla_{\!\dot y}\widetilde w
+R(\dot y,\widetilde w\hs)\hh\dot y+\nabla_{\!\dot y}\dot y=0$. Using any such 
$\,\widetilde w\,$ and any function $\,\my:\bbR\to\bbR\,$ with the second 
derivative $\,\ddot\my=Q(\widetilde w\hs)/4$, we now get a solution 
$\,w=\widetilde w-\my u\,$ to (\ref{ode}), defined on $\,\bbR\hs$. (Note 
that $\,Q(w)=Q(\widetilde w\hs)$, and $\,R(\dot y,u)\hh\dot y=0\,$ since 
$\,\nabla u=0$.)

Any solution $\,w\,$ to (\ref{ode}), defined on $\,\bbR\hs$, leads to the 
variation of curves in $\,\hm\,$ given by $\,x(t,s)=\hs\exp_{\hs y(t)}sw(t)$. 
Let $\,v\,$ be the vector field along the variation such that $\,v_s=0\,$ for 
all $\,(t,s)$, and $\,v=\nabla_{\!\dot y}\dot y\,$ at $\,s=0\,$ (notation as 
in (\ref{xts})). We have
\begin{equation}\label{xtt}
\mathrm{i)}\hskip9ptx_{tt}\,+\,(s-1)[v-s\hs Q(x_s)\hh u/4\hh]\,=\,0\hs,
\hskip29pt\mathrm{ii)}\hskip9pt[Q(x_s)]_s\,=\,\hs0\hs.
\end{equation}
where the subscripts now also stand for partial derivatives of functions of 
$\,(t,s)$, and $\,Q(x_s)=3\hs[\gm(A\uxs,\hs\uxs)]_t
+3\fh\hs[g(x_s,x_s)]_t+2\fh_t\hs g(x_s,x_s)$. Before proving (\ref{xtt}), note 
that
\begin{equation}\label{uxo}
\mathrm{a)}\hskip7ptg(x_t,u)=1\hs,
\hskip9pt\mathrm{b)}\hskip7ptx_{ss}=0\hs,
\hskip9pt\mathrm{c)}\hskip7ptx_s,x_{tt}\text{\rm\ and\ }\,x_{st}=x_{ts}
\text{\rm\ are\ all\ tangent\ to\ }\,\dzp.
\end{equation}
In fact, (\ref{uxo}.a) and the claim about $\,x_{tt}$ in (\ref{uxo}.c) are 
immediate from clauses (b), (c) in Remark~\ref{param}. Next, $\,x_{ss}=0\,$ 
and $\,x_s$ is tangent to $\,\dzp\nnh$, as $\,\dzp$ has totally geodesic 
leaves (Remark~\ref{fibme}), while the curves $\,s\mapsto x(t,s)\,$ are 
geodesics, tangent to $\,\dzp$ at $\,s=0$. Finally, $\,x_{st}$ is tangent to 
$\,\dzp\nnh$, since so is $\,x_s$ and $\,\dzp$ is parallel.

Furthermore, for the covariant derivatives $\,R_t,R_s$ of the curvature tensor,
\begin{equation}\label{fso}
\mathrm{a)}\hskip14ptR_t(x_t,x_s)x_s\,=\,\fh_t\hs g(x_s,x_s)\hs u\hs, 
\hskip22pt\mathrm{b)}\hskip14ptR_s\,=\,\hs0\hs, 
\hskip22pt\mathrm{c)}\hskip14pt\fh_s\,=\,\hs0\hs.
\end{equation}
Namely, by (f) in Section~\ref{potb}, $\,\fh\,$ is a function of $\,t$, while 
$\,t_s=0\,$ as the curves $\,s\mapsto x(t,s)$ are tangent to 
$\,\dzp\nh=\hs\text{\rm Ker}\,dt$, and (\ref{fso}.c) follows. Since 
$\,\ri=(2-n)\hs\fh(t)\,dt\otimes dt$, $\,\nabla W=0$ and $\,\nabla dt=0$, 
(\ref{fso}.a) and (\ref{fso}.b) are immediate from (\ref{fso}.c) and 
Lemma~\ref{rwcod}(a).

We can now prove (\ref{xtt}). Relation (\ref{xtt}.ii) is obvious from 
(\ref{fso}.c), as $\,g,\gm\,$ and $\,A\,$ are parallel, so that (\ref{uxo}.b) 
yields $\,[\gm(A\uxs,\hs\uxs)]_s=[g(x_s,x_s)]_s=0$. Denoting by $\,\widetilde v\,$ the left-hand side of 
(\ref{xtt}.i), we get $\,\widetilde v=\widetilde v_s=0\,$ at $\,s=0\,$ (from 
(\ref{xts}.a), (\ref{ode}) and (\ref{xtt}.ii) with $\,u_s=v_s=0$). Finally, 
$\,\widetilde v_{ss}=0\,$ for all $\,(t,s)$, since (\ref{xts}.b) and 
(\ref{ruv}) give $\,x_{ttss}=Q(x_s)\hh u/2$. More precisely, according to 
(\ref{xts}.b) with $\,x_{ss}=0\,$ (see (\ref{uxo}.b)), $\,x_{ttss}$ equals the 
sum of three curvature terms, so that, using the Leibniz rule with 
$\,x_{ts}=x_{st}$, we obtain 
$\,x_{ttss}=3\hh R(x_t,x_s)x_{st}+R_t(x_t,x_s)x_s$, all the other terms being 
zero as a consequence of Lemma~\ref{csnpd}(iii) combined with 
(\ref{uxo}.b,\hs c), and (\ref{fso}.b). Using (\ref{ruv}) with 
(\ref{uxo}.a,\hs c) and (\ref{fso}.a), we now get $\,x_{ttss}=Q(x_s)\hh u/2\,$ 
and $\,\widetilde v_{ss}=0$.

Thus, $\,\widetilde v_s$ must be identically zero, as it is parallel in the 
$\,s\,$ direction and vanishes at $\,s=0$. For the same reason, 
$\,\widetilde v=0\,$ for all $\,(t,s)$, which yields (\ref{xtt}.i).

By (\ref{xtt}.i), $\,x_{tt}=0\,$ when $\,s=1$, so that the curve 
$\,t\mapsto x(t,1)\,$ is a geodesic defined on $\,\bbR\hs$. Such geodesics 
realize all initial conditions $\,(x,\dot x)\,$ with the velocities 
$\,\dot x\,$ for which $\,g_x(u_x,\dot x)=1\,$ (the normalization being due to 
the fact that they are parametrized by the function $\,t$, cf.\ 
Remark~\ref{param}). Namely, we can realize 
$\,(x,\dot x)\,$ by the curve $\,t\mapsto y(t)\,$ chosen above, and then use 
the solution $\,w\,$ to (\ref{ode}) with the zero initial conditions. 
\end{proof}

\section{Proof of Theorem~\ref{class}}\label{pottc}
Suppose that $\,(\hm,\hg\hs)\,$ satisfies the assumptions of 
Theorem~\ref{class}. The data required for the construction in 
Section~\ref{xpls} can be introduced as follows. For $\,u\,$ chosen at the 
beginning of Section~\ref{clth}, we define $\,\fh\,$ as in the lines following 
(\ref{ruv}). According to (f) in Section~\ref{potb}, $\,\fh\,$ is, locally, a 
function of $\,t$. The word `locally' may be dropped as we are assuming 
condition (c) in Lemma~\ref{clext}, and hence the level sets of $\,\hs t\hs$ 
are connected. Next, we let $\,\mv\hs$ be the space of all parallel sections 
of $\,\qt$, so that $\,\dim\mv\nh=n-2\,$ by Lemma~\ref{csnpd}(iv). 
Finally, the pseu\-\hbox{do\hs-}Euclid\-e\-an inner product $\,\lr\,$ in 
$\,\mv$ and $\,A:\mv\to\mv\hs$ are the objects induced, in an 
obvious manner, by the fibre metric $\,\gm\,$ on $\,\qt\,$ and the bundle 
morphism $\,A:\qt\to\qt$, both of which are parallel (see Remark~\ref{fibme} 
and (d) in Section~\ref{potb}).

We now fix a null geodesic $\,\bbR\ni t\mapsto x(t)\in\hm\,$ parametrized by 
the function $\,t$, which exists in view of Lemma~\ref{compl} and 
Remark~\ref{param}(a). As $\,g(\dot x,u)=1\,$ (see Remark~\ref{param}(c)), the 
plane $\,P\,$ in $\,\txohm\,$ spanned by the null vectors $\,\dot x(0)\,$ and 
$\,u_{x(0)}$ is $\,g_x$-non\-de\-gen\-er\-ate, with the sign pattern 
\hbox{$\,-\hskip1.2pt+\hs$}, so that $\,\txohm=P\oplus\widetilde\mv\nh$, for 
$\,\widetilde\mv\nh=P^\perp\nnh$.

We define a mapping $\,F:\rto\nh\times\mv\nh\to\hm\,$ by 
$\,F(t,s,v)=\exp_{\hs x(t)}(\tilde v(t)+su_{x(t)}/2)$, for the parallel field 
$\,t\mapsto \tilde v(t)\in\txthm\,$ with 
$\,\tilde v(0)=\hs\text{\rm pr}\hskip2.4ptv_{x(0)}$, where 
$\,\hs\text{\rm pr}:\txohm\to\widetilde\mv\hs$ is the orthogonal projection 
(and so $\,\hs\text{\rm pr}\hh$, restricted to $\,\dzxp$ for $\,x=x(0)$, 
descends to the quotient $\,\qt_x=\dzxp\nnh/\dz_x$, forming an isomorphism 
$\,\qt_x\to\widetilde\mv$).

That $\,F\,$ is a \feo\ can be seen as follows. The manifold $\,N\hs$ in 
Lemma~\ref{clext}(c) is simply connected, since so is $\,\hm$. Therefore,  
each leaf of $\,\dzp$ (level set of $\,t$), with its complete flat 
tor\-sion\-free connection (Remark~\ref{fibme}), is the \feic\ image 
of its tangent space at any point under the exponential mapping, 
cf.\ Remark~\ref{scfco}.

Finally, according to \cite[Lemma 5.1]{derdzinski-roter-07}, $\,F^*\hg\,$ 
coincides with the metric $\,\kx\,dt^2\nh+\hs dt\hs ds+\vh$ on 
$\,\rto\nh\times\mv\nh$, constructed from the data described above as in 
Section~\ref{xpls}.

\section{Proof of Theorem~\ref{nolor}}\label{potc}
In the following lemma, the bundle morphism $\,A:\qt\to\qt\,$ defined in 
Section~\ref{potb} makes sense even without assuming that the line bundle 
$\,\dz\,$ is trivial. In fact, $\,A\,$ depends quadratically on our fixed 
nonzero parallel section $\,u\,$ of $\,\dz$, which, for nontrivial $\,\dz$, 
is still well-defined, locally, up to a sign. (Cf.\ Theorem~\ref{parvf}.)
\begin{lemma}\label{eglfl}Let\/ $\,(M,g)\,$ be a compact \ecs\ 
Lo\-rentz\-i\-an \mf\ of dimension $\,n\ge4\,$ such that, at some/every 
point\/ $\,x\in M$, the nonzero traceless self-ad\-joint endomorphism 
$\,A_x:\qt_x\to\qt_x$ has $\,n-2=\dim\qt_x$ distinct eigenvalues. Then the 
leaves of the parallel distribution $\,\dzp$ are all complete.
\end{lemma}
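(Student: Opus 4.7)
The plan is to apply Lemma~\ref{flcpl} to each leaf $N\hs$ of $\dzp$, after passing to a finite cover that trivializes the relevant line bundles.

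First, by Theorem~\ref{bdcir}, some two-fold cover $\tim$ of $M\hs$ is a bundle over $S^1\nnh$ whose fibres are precisely the leaves of $\dzp\nnh$, lifted to $\tim$; in particular these leaves are compact. Covering maps are local isomorphisms of connections, so completeness of leaves descends from $\tim$ to $M\nh$, and I may replace $M\hs$ by $\tim$. On each resulting compact leaf $N\nh$, Remark~\ref{fibme} together with Lemma~\ref{csnpd}(iii) provides a flat torsion-free induced connection $\nabla$ on $N\nh$.

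Fix such a leaf $N\nh$. The restriction of $\dz$ to $N\hs$ is a parallel real line subbundle $\lz\hs$ of $\tn$, trivialized by the global parallel vector field $u$ supplied by Theorem~\ref{parvf}. The quotient $\tn/\lz\hs$ equals the restriction to $N\hs$ of $\qt\nh=\dzp/\dz$; since $g\hs$ is Lorentzian and $\dim\dz=1$, Remark~\ref{fibme} makes the fibre metric $\gm$ on $\qt$ positive definite. The distinct-eigenvalue hypothesis on $A$, combined with assertion (e) in Section~\ref{potb}, then produces an orthogonal decomposition of $\qt$ over $M\hs$ as a direct sum of $\nabla$-parallel real line subbundles; restricting this decomposition to $N\hs$ gives the analogous splitting of $\tn/\lz$.

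Each of the finitely many real line bundles appearing (namely $\lz\hs$ itself and the summands of $\tn/\lz$) inherits a parallel positive-definite fibre norm, so each is trivialized by a global parallel section exactly when it is orientable. The non-orientability obstructions lie in $H^1(N;\bbZ/2\bbZ)$, and are jointly killed by a finite, hence compact, covering $\hat N\to N\nh$. On $\hat N\nh$, both $\lz\hs$ and $\tn/\lz$ are trivialized by parallel sections, so Lemma~\ref{flcpl} delivers completeness of the induced flat torsion-free connection on $\hat N\nh$. Completeness then descends from $\hat N\hs$ to $N\nh$, and from the compact leaves in $\tim$ to the corresponding leaves in the original $M\nh$, finishing the argument.

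The main obstacle I expect is the juggling of two distinct coverings: the two-fold cover of $M\hs$ supplied by Theorem~\ref{bdcir}, which is what forces the leaves to be compact, and the finite cover of each individual leaf $N\hs$ needed to trivialize the line summands of $\tn/\lz$. Verifying that the splitting of $\qt$ restricts coherently to the leaf, and that completeness of the flat connection really descends through both covers, is the point where some care is needed; everything else is a straightforward application of the machinery already assembled in Sections~\ref{prel},~\ref{csma} and~\ref{potb}.
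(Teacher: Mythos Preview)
Your argument is correct and follows the paper's route: pass to a finite cover trivializing $\dz$ and the eigen-line summands of $\qt$, then invoke Lemma~\ref{flcpl} on each (compact) leaf. The one difference is that the paper takes a single finite cover of $M$ rather than covering each leaf separately, which avoids your two-layer descent of completeness; on the other hand, your version is more explicit about \emph{why} the leaves are compact (via Theorem~\ref{bdcir} and the identification of fibres with leaves of $\dzp$), a point the paper leaves implicit.
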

In fact, in view of Lemma~\ref{csnpd}(iv) and (e) in Section~\ref{potb}, 
passing to a suitable finite covering of $\,M\,$ we may assume that both 
vector bundles $\,\dz\,$ and $\,\qt\hs$ over $\,M\,$ are trivialized by their 
parallel sections. Our assertion now follows from Lemma~\ref{flcpl} applied to 
$\,\lz\,$ which is the restriction of $\,\dz\,$ to 
any leaf $\,N\hs$ of $\,\dzp\nnh$.
\begin{remark}\label{eigvl}For $\,n=4$, the assumption about eigenvalues in 
Lemma~\ref{eglfl} is redundant: it follows from the other stated properties of 
$\,A_x$, since $\,\gm_x$ is positive definite (Remark~\ref{fibme}).
\end{remark}
\begin{proof}[Proof of Theorem~{\rm\ref{nolor}}]Suppose that $\,(M,g)\,$ 
is a compact \ecs\ Lo\-rentz\-i\-an four\mfd. Its universal covering 
$\,(\hm,\hg\hs)\,$ then satisfies the assumptions of Theorem~\ref{class}: the 
leaves of $\,\dzp$ are complete by Lemma~\ref{eglfl} (cf.\ Remark~\ref{eigvl}), 
while condition (c) in Lemma~\ref{clext} holds for $\,t\,$ in view of 
Remark~\ref{concl}.

The conclusion of Theorem~\ref{class} now contradicts 
Lemma~\ref{ecsrr}(iv-b).
\end{proof}

\section{Vector bundles related to Killing fields}\label{vbkf}
Throughout this section, $\,(M,g)\,$ stands for a fixed \ecs\ \prd\ of 
dimension $\,n\ge4\,$ such that the \od\ $\hs\dz\hs$ is two\diml, and 
$\,\hs\om\hs\,$ is the $\,2$-form described in Remark~\ref{oldis}. We assume 
that $\,\hs\om\hs\,$ is single-val\-ued, rather than being defined just up to 
a sign, which can always be achieved by passing to a two-fold covering 
manifold, if necessary.

In addition to $\,\dz\,$ and $\,\dzp\nnh$, we consider here the real vector 
bundle $\,\yv\,$ over $\,M$, the sections of which are the differential 
$\,2$-forms $\,\zh\,$ such that $\,\zh(v,\,\cdot\,)=0\,$ for every section 
$\,v\,$ of $\,\dz$. Thus, $\,\yv\,$ is a subbundle of 
$\,(\tam)^{\wedge2}\nnh$, isomorphic to $\,([\hh\tm/\dz]^*)^{\wedge2}\nnh$, 
and $\,\hs\om\hs\,$ is a section of $\,\yv\,$ (as 
$\,\dz\subset\hs\text{\rm Ker}\,\hs\om\hs\,$ by (\ref{krp}.a) and (\ref{owo}). 
Let $\,\lz\,$ be the real-line subbundle of $\,\yv$ spanned by 
$\,\hs\om\hh$. The \lcc\ of $\,g\,$ induces connections, all denoted by 
$\,\nabla$, in each of the byndles $\,\dz,\dzp\nnh,\yv\,$ and $\,\lz$.

The formula 
$\,\nao_{\!u}(v,\zh)=(\nabla_{\!u}v-\zh u,\nabla_{\!u}\zh
-(n-2)^{-1}g(v,\,\cdot\,)\wedge\ri\hh(u,\,\cdot\,))$, with $\,\ri\,$ standing 
for the \rt\ of $\,g$, clearly defines a connection $\,\nao\,$ in the \vb\ 
$\,\dzp\nnh\oplus\yv$. (Here $\,\zh u\,$ is the unique vector field with 
$\,g(\zh u,\,\cdot\,)=\zh(u,\,\cdot\,)$, and similarly for the symbols 
$\,\ri u,\hs\om_x$ appearing below.) The following facts will be needed in 
the next section.
\begin{enumerate}
  \def\theenumi{{\rm\alph{enumi}}}
\item The connection $\,\nao\,$ in $\,\dzp\nnh\oplus\yv\,$ is flat.
\item The sub\bd\ $\,\dz\oplus\lz\,$ of $\,\dzp\nnh\oplus\yv\,$ is 
$\,\nao$\prl.
\item For every $\,\nao$\prl\ section $\,(v,\zh)\,$ of $\,\dzp\nnh\oplus\yv$, 
the $\,g$-co\-var\-i\-ant derivative $\,\nabla v\,$ is $\,\nabla$\prl\ along 
$\,\dzp\nnh$.
\end{enumerate}
If $\,M\,$ is also simply connected, then, for the space $\,\vy\,$ of all 
$\,\nao$\prl\ sections of $\,\dz\oplus\lz$,
\begin{enumerate}
  \def\theenumi{{\rm\alph{enumi}}}
\item[(d)] $\dim\vy=3$,
\item[(e)] there exists a unique $\,C^\infty$ mapping 
$\,F:M\to\vy\smallsetminus\nh\{0\}\,$ such that $\,F(x)$, for $\,x\in M$, is 
characterized by $\,F(x)=(v,\zh)\,$ with $\,v_x=0\,$ and $\,\zh_x=\hs\om_x$,
\item[(f)] for $\,F\,$ as in (e) and any $\,x\in M$, the differential 
$\,dF_x:\txm\to\vy\hs$ has rank $\,2$, while the image $\,dF_x(\txm)\,$ and 
$\,F(x)\,$ together span $\,\vy$,
\item[(g)] the leaves of $\,\dzp$ are the connected components of nonempty 
$\,F$-pre\-im\-ages of points of $\,\vy$,
\item[(h)] for every leaf $\,N\hs$ of $\,\dzp\nnh$, the tangent bundle 
$\,\tn\hs$ is trivialized by its $\,\nabla$\prl\ sections (cf.\ 
Remark~\ref{fibme}).
\end{enumerate}
We will not use the easily-verified fact that the assignment 
$\,v\mapsto(v,\zh)$, with $\,\zh\,$ characterized by 
$\,\zh(u,\,\cdot\,)=g(\nabla_{\!u}v,\,\cdot\,)\,$ for all vector fields 
$\,u$, is a linear isomorphism between the space of all \kf s $\,v\,$ on 
$\,(M,g)\,$ tangent to $\,\dzp\nnh$, and the space of all $\,\nao$\prl\ 
sections $\,(v,\zh)\,$ of $\,\yz$.

Let $\,\ro,R\,$ and $\,\hat R\,$ be the curvature tensors of $\,\nao$, the 
\lcc\ $\,\nabla\,$ of $\,g\,$ and, respectively, the connection induced by 
$\,\nabla\,$ in $\,[\tam]^{\wedge2}\nnh$. For arbitrary vector fields 
$\,u,u\hh'$ on $\,M$, (\ref{cur}) yields 
$\,\ro(u,u\hh'\hs)\hh(v,\zh)=(v\hh'\nh,\zh\hs'\hs)$, where 
$\,v\hh'\nh=R(u,u\hh'\hs)\hh v
-\text{\hbox{$(n-2)^{-1}g(v,u)\ri u\hh'\nh$}}
+\text{\hbox{$(n-2)^{-1}g(v,u\hh'\hs)\ri u$}}\,$ and 
$\,\zh\hs'\nh=\hat R(u,u\hh'\hs)\hh\zh
+(n-2)^{-1}[\hs\zh(u,\,\cdot\,)\wedge\ri(u\hh'\nnh,\,\cdot\,)
-\zh(u\hh'\nnh,\,\cdot\,)\wedge\ri(u,\,\cdot\,)\hh]$. (By 
Lemma~\ref{rwcod}(b), $\,(\nabla_{\!u}\ri)(u\hh'\nnh,\,\cdot\,)\,$ is 
symmetric in $\,u,u\hh'\nnh$.) Now Lemma~\ref{rwcod}(a) gives 
$\,g(v\hh'\nnh,\,\cdot\,)=W(u,u\hh'\nnh,v,\,\cdot\,)$. (Note that 
$\,\ri(v,\,\cdot\,)=0\,$ in view of (\ref{krp}.a), since $\,v\,$ is a section 
of $\,\dzp\nnh$.) Consequently, $\,v\hh'\nh=0$, as $\,v\,$ is a section of 
$\,\dzp\nh=\hs\text{\rm Ker}\,\hs\om\hs\subset\hs\kw\nh$, by (\ref{owo}) with 
$\,W\nh=\hs\ve\hskip1pt\om\nh\otimes\om\hh$.
Furthermore, by the Ric\-ci identity, 
$\,[\hh\hat R(u,u\hh'\hs)\hh\zh\hs](w,w\hh'\hs)
=\zh(R(u\hh'\nnh,u)\hh w,w\hh'\hs)+\zh(w,R(u\hh'\nnh,u)\hh w\hh'\hs)$ 
for any vector fields $\,w,w\hh'\nnh$. Replacing $\,R\,$ here by the 
expression in Lemma~\ref{rwcod}(a), we get $\,\zh\hs'\nh=0$. (In fact, 
numerous terms vanish since the images of $\,W_{\nh x}$ and $\,\ri_x$ at 
any point $\,x\,$ are contained in 
$\,\dzx\subset\hs\text{\rm Ker}\,\zh_x$, cf.\ (\ref{owo}) and 
(\ref{krp}.a).) This proves (a).

Next, if $\,(v,\zh)\,$ is a section of $\,\dz\oplus\lz$, then, for any vector 
field $\,u$, so are $\,(\nabla_{\!u}v,\nabla_{\!u}\zh)$ (since $\,\dz\,$ and 
$\,\hs\om\hs\,$ are parallel), and 
$\,(\zh u,(n-2)^{-1}g(v,\,\cdot\,)\wedge\ri\hh(u,\,\cdot\,)\,$ (as $\,\dz\,$ 
is the image of $\,\hs\om\hs\,$ by (\ref{owo}), and $\,\zh\,$ equals a 
function times $\,\hs\om\hh$, while $\,v\,$ and $\,\ri u\,$ are sections of 
$\,\dz$, cf.\ Lemma~\ref{csnpd}(ii)). Consequently, $\,\nao_{\!u}(v,\zh)\,$ is 
a section of $\,\dz\oplus\lz\,$ as well, and (b) follows.

By the definition of $\,\nao$, if $\,(v,\zh)\,$ is a $\,\nao$\prl\ section of 
$\,\dzp\nnh\oplus\yv$, then $\,\nabla_{\!u}\zh=0\,$ for every section $\,u\,$ 
of $\,\dzp\nnh$, as one then has $\,\ri\hh(u,\,\cdot\,)=0\,$ by (\ref{krp}.a). 
Hence $\,\zh\,$ is $\,\nabla$\prl\ along $\,\dzp\nnh$, which yields (c).

Now let $\,M\,$ be simply connected. Assertion (d) is obvious from (a) and 
(b), as $\,3\,$ is the fibre dimension of $\,\dz\oplus\lz$. Similarly, (e) 
is due to the existence of a unique parallel section of $\,\dz\oplus\lz\,$ 
with a prescribed value at $\,x$. 

To prove (f), we first observe that $\,dF_x$ sends any $\,u\in\txm\,$ to 
$\,(\dot v,\dot\zh)\in\vy\,$ characterized by 
$\,\dot v_x=-\hs\om_xu\,$ and $\,\dot\zh_x=0$. In fact, let 
$\,t\mapsto x(t)\,$ be a curve in $\,M$, and let us set 
$\,(v(t),\zh(t))=F(x(t))$. Suppressing the dependence on $\,t$, and 
differentiating, covariantly along the curve, both the relation $\,v_x\nh=0\,$ 
and the equality which states that $\,(\nabla v)_x$ corresponds via $\,g_x$ to 
$\,\hs\om_x$, we get $\,\dot v_x+\hs\om_x(\dot x,\,\cdot\,)=0\,$ and 
$\,(\nabla\dot v)_x=0$, as required. (The second covariant derivative of the 
\kf\ $\,v\,$ at $\,x\,$ depends linearly on $\,v_x$, due to a well-known 
identity \cite[formula (17.4) on p.\ 536]{dillen-verstraelen}, and so 
$\,\nabla(\nabla v)=0\,$ at $\,x$, since $\,v_x=0$.) As 
$\,\hs\text{\rm rank}\,\hs\om\hs=2\,$ (see Remark~\ref{oldis}), this 
implies (f).

As a consequence of (c), $\,F\,$ is constant along $\,\dzp\nnh$. (Note that 
$\,\nabla\hs\om\hs=0\,$ and, by (\ref{owo}), 
$\,\dzp\nh=\hs\text{\rm Ker}\,\hs\om\hh$.) Now (g) is immediate from (f).

Finally, in view of (a), given a leaf $\,N\hs$ of $\,\dzp\nnh$, a point 
$\,x\in N\nh$, and a vector $\,w\in\txn=\dzxp\nnh$, we may choose a 
$\,\nao$\prl\ section $\,(v,\zh)\,$ of $\,\dzp\nnh\oplus\yv\,$ satisfying the 
initial conditions $\,v_x=w\,$ and $\,\zh_x=0\,$ (that is, 
$\,(\nabla v)_x=0$). Thus, $\,v\,$ is tangent to $\,N\hs$ and, by (c), 
$\,\nabla$\prl\ along $\,N\nh$, which proves (h).

\section{Proof of the second part of Theorem~\ref{pontr}}\label{psta}
We need the following two simple facts from topology.
\begin{lemma}\label{abfgp}If the fundamental group\/ $\,\Gm$ of a 
compact\/ $\,k$\diml\ \mf\/ $\,P$ is Abelian and the universal 
covering \mf\ of\/ $\,P\nh\,$ is diffeomorphic to $\,\rk\nh$, then\/ 
$\,\Gm$ is isomorphic to $\,\bbZ^k\nh$.
\end{lemma}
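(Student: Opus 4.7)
My plan is to establish $\Gm\cong\bbZ^k$ by first recognizing $P$ as an aspherical space, then using the free action of $\Gm$ on $\rk$ to rule out torsion, and finally matching the dimension via cohomology.

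Since the universal covering space $\rk$ is contractible, $P\,$ is a $\,K(\Gm,1)$. Also $\,\Gm=\pi_1P\,$ is finitely generated, as $P$ is a compact manifold. Being finitely generated and Abelian, $\Gm$ is the direct sum of a finite Abelian group and a free Abelian group $\bbZ^m$, so to obtain the conclusion it suffices to verify that $\Gm$ is torsion-free and that its rank $m$ equals $k$.

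For the torsion-free assertion, I would observe that $\Gm$ acts on $\,\rk\,$ freely and properly discontinuously by deck transformations. If $\Gm$ had a nontrivial element $\alpha$ of finite order, then the cyclic group $\langle\alpha\rangle$ would act freely on $\rk$ as well. However, any self-homeomorphism of $\rk$ of finite order has a fixed point: by averaging the Euclidean metric over $\langle\alpha\rangle$ one obtains an $\langle\alpha\rangle$-invariant complete Riemannian metric on $\rk$, whose geodesic midpoints show that any $\langle\alpha\rangle$-orbit has a circumcenter which $\alpha$ must fix (alternatively, one may apply the Brouwer fixed point theorem to a sufficiently large invariant ball). This contradicts freeness of the action, so $\Gm$ has no torsion, and $\Gm\cong\bbZ^m$ for some $m\ge0$.

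It remains to see that $m=k$. The standard $m$-torus $T^m=\bbR^{\hskip-.6ptm}\!/\bbZ^m$ is also an aspherical compact manifold with fundamental group $\,\bbZ^m$, hence is a $K(\bbZ^m,1)$, and so $P$ and $T^m$ are homotopy equivalent. Thus $H^*(P;\bbZ/2)\cong H^*(T^m;\bbZ/2)$. The latter is nonzero in degree $m$ and vanishes in all higher degrees. Yet $P\,$ is a closed manifold of dimension $k$, so its mod $2$ cohomology is nonzero in degree $k$ (Poincar\'e duality with $\bbZ/2$ coefficients). Comparing top nonvanishing degrees forces $m=k$, which completes the argument. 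I expect the fixed-point step used to eliminate torsion to be the one requiring the most care, as the rest is straightforward structure theory and classifying space formalism.
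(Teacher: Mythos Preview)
Your overall strategy matches the paper's: show $\Gm$ is torsion-free, apply the structure theorem for finitely generated Abelian groups to get $\Gm\cong\bbZ^m$, recognize $P$ as a $K(\bbZ^m,1)$, and compare top mod-$2$ (co)homology with that of the $m$-torus to force $m=k$. The paper handles the torsion-free step in one line by invoking Smith's theorem.

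Your proposed justification of torsion-freeness, however, has a genuine gap. Averaging the Euclidean metric over $\langle\alpha\rangle$ does produce an $\langle\alpha\rangle$-invariant (and, with a little care, complete) Riemannian metric on $\rk$, but nothing guarantees that this metric has nonpositive sectional curvature, and the circumcenter argument you invoke (essentially Cartan's fixed point theorem) needs precisely that convexity: without it, a bounded orbit need not have a unique circumcenter. The Brouwer alternative fails for the same reason---there is no evident $\alpha$-invariant disk to which to apply it, since constructing one again requires some center-of-mass or convexity input, and Lefschetz-type arguments are unavailable because $\rk$ is noncompact. A correct substitute is either Smith theory, as in the paper, or the cohomological-dimension observation that a free $\bbZ/p$-action on the contractible manifold $\rk$ would make the quotient a $k$-dimensional $K(\bbZ/p,1)$, contradicting the fact that $H^*(\bbZ/p;\bbZ/p)$ is nonzero in arbitrarily high degrees. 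You were right to flag this step as the delicate one; as written it does not go through.
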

\begin{proof}As $\,\Gm\hh$ is tor\-sion\-free by Smith's theorem 
\cite[p.\ 287]{hu}, and finitely generated, it is isomorphic to 
$\,\bbZ^{\hs r}$ for some integer $\,r\ge1$. The $\,K(\bbZ^{\hs r}\nh,1)\,$ 
space $\,P\,$ must have the homotopy type of the $\,r$-to\-rus 
\cite[pp.\ 93--95]{switzer}, so that $\,r=k$, since both $\,r\,$ and $\,k\,$ 
are equal to the highest integer $\,m\,$ with $\,H_m(P,\bbZ_2)\ne\{0\}$.
\end{proof}
\begin{lemma}\label{infgp}If\/ $\,M\to S^2$ is a fibration and its fibre 
$\,N\hs$ is a compact manifold of dimension $\,k\ge2\,$ with a universal 
covering space \feic\ to $\,\rk\nnh$, then the fundamental group of\/ 
$\,M\,$ is infinite.
\end{lemma}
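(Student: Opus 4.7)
The plan is to apply the long exact sequence of homotopy groups of the fibration $N\hookrightarrow M\to S^2$ and, assuming $\pi_1M$ were finite, to reach a contradiction via Lemma~\ref{abfgp}. The universal cover $\rk$ of $N$ is contractible, hence $\pi_jN=0$ for all $j\ge 2$; in particular $\pi_2N=0$. Combined with $\pi_1S^2=0$, the long exact sequence collapses to
$$\pi_2S^2\,\to\,\pi_1N\,\to\,\pi_1M\,\to\,1,$$
exhibiting $\pi_1M$ as the quotient of $\pi_1N$ by a cyclic subgroup $C$, namely the image of $\pi_2S^2\cong\bbZ$.

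Next, I would note that $\pi_1N$ acts freely and cocompactly on the contractible space $\rk$, hence is infinite. Supposing for contradiction that $\pi_1M$ is finite, $C$ would have finite index in the infinite group $\pi_1N$, which forces $C$ itself to be infinite cyclic, i.e., $C\cong\bbZ$. Let $\tilde N\to N$ denote the connected covering manifold associated with $C\subset\pi_1N$. Since $[\pi_1N:C]<\infty$, the space $\tilde N$ is a compact $k$-manifold; its universal cover coincides with that of $N$, hence is $\rk$; and $\pi_1\tilde N\cong C\cong\bbZ$.

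Applying Lemma~\ref{abfgp} to $\tilde N$ (which is legitimate since $\bbZ$ is Abelian) would force $\bbZ\cong\bbZ^k$, contradicting $k\ge 2$. Therefore $\pi_1M$ must be infinite.

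The main obstacle, I expect, is the bookkeeping rather than any deep step: one must verify that $C$ is indeed cyclic (immediate from $\pi_2S^2\cong\bbZ$), that finite index inside an infinite group forces $C$ to be infinite cyclic, and that the intermediate cover $\tilde N$ retains $\rk$ as its universal cover. No further group theory (such as the classification of virtually cyclic groups) is needed here, because the authors' own Lemma~\ref{abfgp} handles the remaining input.
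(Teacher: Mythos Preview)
Your argument is correct and follows essentially the same route as the paper's own proof: both use the homotopy exact sequence of the fibration to see that the image of $\pi_2S^2\cong\bbZ$ in $\pi_1N$ is a cyclic subgroup of finite index (under the assumption that $\pi_1M$ is finite), pass to the corresponding finite cover of $N$, and invoke Lemma~\ref{abfgp} to obtain the contradiction $\bbZ\cong\bbZ^k$ with $k\ge2$. Your version merely spells out a few details the paper leaves implicit (such as $\pi_2N=0$, surjectivity of $\pi_1N\to\pi_1M$, and why $C$ is infinite cyclic rather than finite), none of which are strictly needed but none of which are wrong either.
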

\begin{proof}Since $\,\bbZ=\pi_2S^2\to\hs\pi_1N\to\pi_1M\,$ is a part of 
the homotopy exact sequence of the fibration $\,M\to S^2\nnh$, if 
$\,\pi_1M\,$ were finite, the image $\,\Gm\hs$ of $\,\bbZ=\pi_2S^2$ would 
be a cyclic subgroup of finite index in $\,\pi_1N\nh$. The manifold 
$\,P=\rk\nh/\Gm$, forming a finite covering space of 
$\,N\nh=\rk/\pi_1N\nh$, would be compact, which, as $\,k\ge2$, would 
contradict Lemma~\ref{abfgp}.
\end{proof}
We now assume that $\,(M,g)\,$ is a compact simply connected \ecs\ \mf. As we 
show below, this assumption leads to a contradiction, which proves the claim 
about $\,\pi_1M\,$ in Theorem~\ref{pontr}.

Let $\,\rd\in\{1,2\}\,$ be the dimension of the \od\ $\,\dz\,$ (see 
Lemma~\ref{csnpd}(i)).

If $\,\rd=1$, Lemma~\ref{csnpd}(iv) implies the existence of a nonzero 
global parallel vector field $\,u\,$ spanning $\,\dz$. The $\,1$-form 
$\,\xi=g(u,\,\cdot\,)$, being parallel, is closed, so that $\,\xi=dt\,$ 
for some function $\,t$. As $\,dt\,$ is parallel, $\,dt\ne0\,$ 
everywhere, which contradicts compactness of $\,M$.

Now let $\,\rd=2$, and let $\,\vy,F\,$ be as in (d) -- (g), 
Section~\ref{vbkf}. The formula $\,\pi(x)=F(x)/|F(x)|\,$ defines a submersion 
$\,\pi:M\to S^2\nnh$, where $\,S^2\nh=\{w\in\vy:|w|=1\}\,$ is the unit sphere 
for a fixed Euclidean norm $\,|\hskip2.4pt|\,$ in the $\,3$-space $\,\vy$. As 
$\,\pi(M)\subset S^2$ is both compact and open, the submersion $\,\pi\,$ is 
surjective, so that $\,\pi\,$ is a fibration (Remark~\ref{fibra}). The fibres of 
$\,\pi\,$ thus are the leaves of $\,\dzp$ (see (g) in Section~\ref{vbkf}). If 
a fixed fibre $\,N\hs$ is endowed with the flat tor\-sion\-free connection 
mentioned in Remark~\ref{fibme}, then, according to (h) in Section~\ref{vbkf}, 
$\,\tn\hs$ is trivialized by its parallel sections. Lemma~\ref{flcpl} (for 
$\,\lz=\tn$) and Remark~\ref{scfco} now imply that the universal covering 
manifold of $\,N\hs$ is \feic\ to $\,\bbR^{\hskip-.6ptn-2}\nnh$. Since 
$\,\pi_1M\,$ was assumed to be trivial, and $\,n-2\ge2$, this contradicts 
Lemma~\ref{infgp}, thus completing the proof of Theorem~\ref{pontr}.

\section{Further remarks}\label{fure}
This section consists of three separate comments, indicating how some results 
presented above might be strengthened.

First, Theorem~\ref{class}, with essentially the same proof, remains valid if, 
in its assumptions, condition (c) in Lemma~\ref{clext} and completeness of the 
leaves of $\,\dzp$ are replaced by completeness of $\,\hg$.

Secondly, the argument that we used to show nonexistence of compact 
four\diml\ \ecs\ Lo\-rentz\-i\-an \mf s can be minimally modified so as to 
yield the following classification theorem: {\it If a compact \ecs\ 
Lo\-rentz\-i\-an \mf\/ $\,(M,g)\,$ of any dimension $\,n\ge4\,$ satisfies the 
assumption about distinct eigenvalues made in Lemma\/ {\rm\ref{eglfl}}, then 
the \psr\ universal covering $\,(\hm,\hg\hs)\,$ of\/ $\,(M,g)\,$ coincides, up 
to an isometry, with one of the manifolds constructed in 
Section\/~{\rm\ref{xpls}}, and the fundamental group of\/ $\,M$, treated as a 
group of isometries of $\,(\hm,\hg\hs)$, has a fi\-nite-index subgroup 
contained in the group $\,\gp\hs$ defined in Section\/~{\rm\ref{xpls}}}.

Finally, in Section~\ref{psta} we showed that $\,\pi_1M\,$ is infinite for any 
compact \ecs\ \mf\ $\,(M,g)$, using separate arguments for the cases 
$\,\rd=1\,$ and $\,\rd=2$, where $\,\rd\,$ is the dimension of the \od\ 
$\,\dz$. If $\,\rd=1$, we obtain the stronger conclusion $\,b_1(M)\ge1\,$ from 
the following lemma applied to $\,\mathcal{F}=\dzp$ along with a 
vec\-tor-bun\-dle isomorphism $\,\dz\to(\tm/\mathcal{F})^*$ provided by $\,g$, 
cf.\ Lemma~\ref{csnpd}(iv).
\begin{lemma}\label{betti}Let a compact \mf\/ $\,M\hs$ with a 
tor\-sion\-free connection $\,\nabla\hs$ admit a co\-di\-men\-sion-one 
$\,\nabla$\prl\ distribution\/ $\,\mathcal{F}$ such that the 
connection induced by $\,\nabla\hs$ in the quotient line bundle\/ 
$\,\hs\tm/\mathcal{F}\,$ is flat. Then the first Betti number\/ $\,b_1(M)\hs$ 
is positive, and, consequently, $\,M\hs$ has an infinite fundamental group.
\end{lemma}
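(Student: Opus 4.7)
The plan is to produce, possibly after passing to a finite cover, a closed non-exact real 1-form on $M$, by exploiting the flat line bundle $L^{*}=(TM/\mathcal{F})^{*}$, which sits naturally in $T^{*}M$ as the annihilator of $\mathcal{F}$. The underlying algebraic fact is that, because $\nabla$ is torsion-free, any $\nabla$-parallel section $\xi$ of $T^{*}M$ is a closed 1-form, since $d\xi(X,Y)=(\nabla_X\xi)(Y)-(\nabla_Y\xi)(X)=0$. In particular, every $\nabla$-parallel section of $L^{*}$ is a closed 1-form on $M$ vanishing on $\mathcal{F}$.

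The argument then splits into cases according to the holonomy representation $\rho\colon\pi_1M\to\mathbb{R}^{*}$ of the flat line bundle $L^{*}$. If $|\rho|$ is not identically $1$, then $\log|\rho|\colon\pi_1M\to\mathbb{R}$ is a nonzero group homomorphism and so defines a nonzero class in $H^{1}(M,\mathbb{R})\cong\mathrm{Hom}(\pi_1M,\mathbb{R})$, which already gives $b_1(M)\ge 1$. Concretely, this class is represented by the 1-form $\alpha=d\log|s|_h$, where $h$ is any fixed fibre metric on $L^{*}$ and $s$ is a local $\nabla$-parallel section; one checks that $\alpha$ is independent of $s$ (different local choices differ by a locally constant factor), glues to a globally defined closed 1-form on $M$, and has period $\log|\rho(\gamma)|$ on any loop $\gamma$.

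If instead $\rho$ takes values in $\{\pm 1\}$, pass to the at-most-2-fold cover $p\colon\widetilde M\to M$ corresponding to $\ker\rho$. On $\widetilde M$ the pulled-back bundle $p^{*}L^{*}$ has trivial holonomy, hence admits a global nowhere-zero $\nabla$-parallel section $\widetilde\xi$, which by the key fact is closed; compactness of $\widetilde M$ prevents $\widetilde\xi$ from being exact, since any primitive would be a smooth function on a compact manifold without critical points. Thus $b_1(\widetilde M)\ge 1$. When $\rho$ is trivial one already has $\widetilde M=M$ and therefore $b_1(M)\ge 1$; otherwise $\pi_1\widetilde M$ is a finite-index subgroup of $\pi_1M$, and its infinitude (which follows from $b_1(\widetilde M)\ge 1$ via the surjection to the free part of $H_1$) forces $\pi_1M$ to be infinite as well. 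The consequence that $\pi_1M$ is infinite thus holds in every case.

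I expect the main technical obstacle to be the sub-case in which $\rho$ has nontrivial image in $\{\pm 1\}$: the closed 1-form $\widetilde\xi$ is only defined on the double cover and sits in the $(-1)$-eigenspace of the deck involution, so transferring a nonzero class down to $H^{1}(M,\mathbb{R})$ itself requires genuine additional input. This is the step where a fully self-contained proof of the Betti-number assertion (rather than just the $\pi_1$ assertion) demands the most care.
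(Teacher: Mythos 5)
Your proposal mirrors the paper's argument step for step: identify $\mathcal L=(TM/\mathcal F)^*$ as a $\nabla$-parallel line subbundle of $T^*M$, observe that any $\nabla$-parallel section of $\mathcal L$ is a closed $1$-form (by torsion-freeness), note that a nowhere-zero closed $1$-form on a compact manifold cannot be exact, and, when the holonomy of $\mathcal L$ lies in $\{\pm1\}$, pass to a double cover to trivialize it. Your explicit three-way case split is simply an unfolding of the contradiction scheme the paper uses.

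The obstacle you flag in the last sub-case is real, and in fact it cannot be overcome: the $b_1(M)\ge 1$ conclusion fails there, and the paper's own proof contains the same unaddressed gap. After ``replacing $M$ by a two-fold covering manifold, if necessary,'' the paper invokes ``$b_1(M)=0$'' to conclude that $\xi$ (which now lives on the cover) is exact, but $b_1=0$ is not inherited by finite covers. Concretely, take $M=(S^1\times S^2)/\mathbb Z_2$ for the free involution $(z,p)\mapsto(\bar z,-p)$, with $\nabla$ descended from the product of the flat connection on $S^1$ and the Levi-Civita connection of the round metric on $S^2$, and with $\mathcal F$ the descent of the $S^2$-tangent distribution. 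All hypotheses of the lemma hold, the holonomy of $TM/\mathcal F$ is exactly $\{\pm1\}$, and $\pi_1M$ is the infinite dihedral group $\mathbb Z\rtimes\mathbb Z_2$, whose abelianization is $\mathbb Z_2\oplus\mathbb Z_2$; hence $b_1(M)=0$. Only the infiniteness of $\pi_1M$ survives in this sub-case, which is precisely what your argument delivers. So your proof is complete for the $\pi_1$ assertion throughout, and for the $b_1$ assertion except when the holonomy is exactly $\{\pm1\}$ --- and there the $b_1$ assertion, as formulated in the lemma, is simply not true.
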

\begin{proof}The dual bundle of $\,\hs\tm/\mathcal{F}\hs$ may be 
identified with the real-line subbundle $\,\lz$ of $\,\tam\,$ 
such that the sections of $\,\lz\,$ are precisely those $\,1$-forms 
$\,\xi\,$ on $\,M\,$ with $\,\xi(w)=0$ for every section $\,w\,$ of 
$\,\mathcal{F}$. Clearly, $\,\lz\,$ is $\,\nabla$\prl.

Suppose, on the contrary, that $\,b_1(M)=0$, so that the homology 
group $\,H_1(M,\bbZ)\,$ is finite. Replacing $\,M\,$ by a two-fold covering 
\mf, if necessary, we may assume that $\,\lz\,$ is spanned by a global nonzero 
parallel $\,1$-form $\,\xi$. In fact, the connection in $\,\lz\,$ induced by 
$\,\nabla\,$ is flat, and so its holonomy representation, with any fixed 
base point $\,x\in M$, its valued in the multiplicative group 
$\,\bbR\smallsetminus\{0\}$. Since $\,\bbR\smallsetminus\{0\}\,$ is Abelian, 
the holonomy representation is a composite 
$\,\pi_1M\to H_1(M,\bbZ)\to\bbR\smallsetminus\{0\}$, and its image must, due 
to finiteness of $\,H_1(M,\bbZ)$, be contained in $\,\{1,-1\}$.

As $\,\nabla\,$ is tor\-sion\-free and $\,b_1(M)=0$, the parallel 
$\,1$-form $\,\xi\,$ is closed, and hence $\,\xi=dt$ is nonzero 
everywhere, for some function $\,t$, which contradicts compactness of $\,M$.
\end{proof}


\begin{thebibliography}{99} 

\bibitem{auslander-markus}L.\hskip2.3ptAuslander and L.\hskip2.3ptMarkus, {\em 
Holonomy of flat affinely connected manifolds}, Ann.\ of Math. (2) \textbf{62} 
(1955), 139--151. 

\bibitem{avez-f}A.\hskip2.3ptAvez, {\em Formule de Gauss-Bonnet-Chern en 
m\'etrique de signature quelconque}, C.\ R.\ Acad.\ Sci.\ Paris \textbf{255} 
(1962) 2049--2051.

\bibitem{avez-c}A.\hskip2.3ptAvez, {\em Characteristic classes and Weyl 
tensor\/{\rm:} Applications to general relativity}, Proc.\ Natl.\ Acad.\ Sci.\ 
USA \textbf{66} (1970), 265--268. 

\bibitem{bell}D.\hskip2.3ptBell, {\em The Gauss-Bonnet theorem for vector 
bundles}, J.\ Geom. \textbf{85} (2006), 15--21.

\bibitem{chaki-gupta}M.\hskip1.6ptC.\hskip2.3ptChaki and B.\hskip2.3ptGupta, 
{\em On conformally symmetric spaces}, Indian J.\ Math. \textbf{5} (1963), 
113\hs--122.

\bibitem{chern}S.-S.\hskip2.3ptChern, {\em Pseu\-\hbox{do\hs-}Riem\-ann\-i\-an 
geometry and the Gauss-Bonnet formula}, An.\ Acad.\ Brasil.\ Ci.\ \textbf{35} 
(1963), 17--26.

\bibitem{chern-gcc}S.-S.\hskip2.3ptChern, {\em Geometry of characteristic 
classes}, Proc.\ 13th Biennial Seminar Canad.\ Math.\ Congr.\ (Dalhousie 
Univ., Halifax, N.S., 1971), Vol. 1, pp. 1--40, Canad. Math. Congr., Montreal 
1972.

\bibitem{derdzinski-roter-77}A.\hskip2.3ptDerdzi\'nski and W.\hskip2.3ptRoter, 
{\em On conformally symmetric manifolds with metrics of indices $\,0\,$ and 
$\,1\hh$}, Tensor (N.\hskip1.9ptS.) \textbf{31} (1977), 255--259.

\bibitem{derdzinski-roter-78}A.\hskip2.3ptDerdzi\'nski and W.\hskip2.3ptRoter, 
{\em Some theorems on conformally symmetric manifolds}, Tensor 
(N.\hskip1.9ptS.) \textbf{32} (1978), 11--23.

\bibitem{derdzinski-roter}A.\hskip2.3ptDerdzinski and W.\hskip2.3ptRoter, 
{\em Compact pseu\-do\hs-Riem\-ann\-i\-an manifolds with parallel Weyl 
tensor}, preprint, http:/\hskip-1.5pt/www.math.ohio-state.edu/\~{}\nh andrzej/

\bibitem{derdzinski-roter-07}A.\hskip2.3ptDerdzinski and W.\hskip2.3ptRoter, 
{\em The local structure of con\-for\-mal\-ly symmetric manifolds}, preprint, 
arXiv:0704.0596.

\bibitem{dillen-verstraelen}F.\hskip1.6ptJ.\hskip1.6ptE.\hskip2.3ptDillen and 
L.\hskip1.6ptC.\hskip1.6ptA.\hskip2.3ptVerstraelen (eds.), {\em Handbook of 
Differential Geometry}, Vol. I, North-Hol\-land, Amsterdam, 2000.

\bibitem{hu}S.-T.\hskip2.3ptHu, {\em Homotopy Theory}, Academic Press, 
New York, 1959.

\bibitem{law}P.\hskip1.6ptR.\hskip2.3ptLaw, {\em Neutral Einstein metrics in 
four dimensions}, J.\ Math.\ Phys. \textbf{32} (1991), 3039--3042.

\bibitem{milnor}J.\hskip1.6ptW.\hskip2.3ptMilnor, {\em Morse Theory}, 2nd ed., 
Annals of Mathematics Studies \textbf{62}, Princeton University Press, 
Princeton, 1965.

\bibitem{olszak}Z.\hskip2.3ptOlszak, {\em On conformally recurrent 
manifolds}, I: {\em Special distributions}, Zesz. Nauk. Po\-li\-tech. \'Sl., 
Mat.-Fiz. \textbf{68} (1993), 213--225.

\bibitem{roter}W.\hskip2.3ptRoter, {\em On conformally symmetric 
Ric\-ci-re\-cur\-rent spaces}, Colloq.\ Math. \textbf{31} (1974), 87--96.

\bibitem{switzer}R.\hskip1.6ptM.\hskip2.3ptSwitzer, {\em Algebraic Topology 
-- Homotopy and Homology}, Springer, New York, 1975.

\end{thebibliography}
\end{document}